\newtheorem{theorem}{Theorem}[section]
\newtheorem{lemma}[theorem]{Lemma}
\newtheorem{proposition}[theorem]{Proposition}
\newtheorem{conjecture}[theorem]{Conjecture}
\theoremstyle{definition}
\newtheorem{definition}[theorem]{Definition}
\newtheorem{remark}[theorem]{Remark}
\newtheorem{example}[theorem]{Example}
\theoremstyle{remark}
\newcommand{\eproof}{\hfill$\square$}
\renewcommand{\theclaim}{\textup{\theclaim}}
\numberwithin{equation}{section}
\def\openone
\newbox\ipbox
\newcommand{\diracb}[1]{\left\langle #1\mathrel{\mathchoice

{\setbox\ipbox=\hbox{$\displaystyle \left\langle\mathstrut
#1\right.$}

\vrule height\ht\ipbox width0.25pt depth\dp\ipbox}

{\setbox\ipbox=\hbox{$\textstyle \left\langle\mathstrut
#1\right.$}

\vrule height\ht\ipbox width0.25pt depth\dp\ipbox}

{\setbox\ipbox=\hbox{$\scriptstyle \left\langle\mathstrut
#1\right.$}

\vrule height\ht\ipbox width0.25pt depth\dp\ipbox}

{\setbox\ipbox=\hbox{$\scriptscriptstyle \left\langle\mathstrut
#1\right.$}

\vrule height\ht\ipbox width0.25pt depth\dp\ipbox}

}\right. }
\newcommand{\dirack}[1]{\left. \mathrel{\mathchoice

{\setbox\ipbox=\hbox{$\displaystyle \left.\mathstrut
#1\right\rangle$}

\vrule height\ht\ipbox width0.25pt depth\dp\ipbox}

{\setbox\ipbox=\hbox{$\textstyle \left.\mathstrut
#1\right\rangle$}

\vrule height\ht\ipbox width0.25pt depth\dp\ipbox}

{\setbox\ipbox=\hbox{$\scriptstyle \left.\mathstrut
#1\right\rangle$}

\vrule height\ht\ipbox width0.25pt depth\dp\ipbox}

{\setbox\ipbox=\hbox{$\scriptscriptstyle \left.\mathstrut
#1\right\rangle$}

\vrule height\ht\ipbox width0.25pt depth\dp\ipbox}

} #1\right\rangle}
\newcommand{\beq}{\begin{equation}}
\newcommand{\eeq}{\end{equation}}
\def\blfootnote{\xdef\@thefnmark{}\@footnotetext}
\renewcommand{\mod}{\operatorname{mod}}
\def\R{\mathbb{R}}
\def\-{^{-1}}
\def\Z{\mathbb{Z}}
\begin{document}

\title{On Spectral Cantor-Moran measures and  a variant of Bourgain's sum of sine problem}
\author{Lixiang An}
\address{[Lixiang An]School of Mathematics and Statistics,
$\&$ Hubei Key Laboratory of Mathematical Sciences,
Central China Normal University,
Wuhan 430079,
P.R. China.}

 \email{anlixianghai@163.com}

\author{Xiaoye Fu}
\address{[Xiaoye Fu]School of Mathematics and Statistics,
$\&$ Hubei Key Laboratory of Mathematical Sciences,
Central China Normal University,
Wuhan 430079,
P.R. China.}

 \email{xiaoyefu@mail.ccnu.edu.cn}

\author{Chun-Kit Lai}

\address{[Chun-Kit Lai]Department of Mathematics, San Francisco State University,
1600 Holloway Avenue, San Francisco, CA 94132.}

 \email{cklai@sfsu.edu}

\thanks{The research of Lixiang An and Xiaoye Fu is supported by NSFC  grant 11601175, 11401205.}
\subjclass[2010]{42B10,28A80,42C30}
\keywords{Spectral measures, Infinite convolution, Fourier frames, and sum of sine functions}

\begin{abstract}
In this paper, we show that if we have a sequence of Hadamard triples $\{(N_n,B_n,L_n)\}$ with $B_n\subset \{0,1,..,N_n-1\}$ for $n=1,2,...$, except an extreme case, then the associated Cantor-Moran measure 
$$
\begin{aligned}
\mu = \mu(N_n,B_n) =& \delta_{\frac{1}{N_1}B_1}\ast\delta_{\frac{1}{N_1N_2}B_2}\ast
\delta_{\frac{1}{N_1N_2N_3}B_3}\ast...\\
=& \mu_n\ast\mu_{>n}
\end{aligned}
$$
with support inside $[0,1]$ always admits an exponential orthonormal basis $E(\Lambda) = \{e^{2\pi i \lambda x}:\lambda\in\Lambda\}$ for $L^2(\mu)$, where $\Lambda$ is obtained from suitably modifying $L_n$.  Here, $\mu_n$ is the  convolution of the first $n$ Dirac measures and $\mu_{>n}$ denotes the tail-term. 

\smallskip

We show that the completeness of $E(\Lambda)$ in general depends on the ``equi-positivity" of the sequence of the pull-backed tail of the Cantor-Moran measure $\nu_{>n}(\cdot) = \mu_{>n}((N_1...N_n)^{-1}(\cdot))$. Such equi-positivity can be analyzed by the integral periodic zero set of the  weak limit of $\{\nu_{>n}\}$.   This result offers a new conceptual understanding of the completeness of exponential functions and it improves significantly many partial results studied by recent research, whose focus has been specifically on $\#B_n\le 4$.

\smallskip

Using the Bourgain's example that a sum of sine can be asymptotically small,  we shows that, 
in the extreme case,  there exists some Cantor-Moran measure such that the equi-positive condition fails and the Fourier transform of the associated $\nu_{>n}$  uniformly converges on some unbounded set. 
\end{abstract}
\maketitle

\section{Introduction}
\begin{definition}
A  Borel probability measure $\mu$ on ${\mathbb R}^d$ is called a {\it spectral measure} if we can find a countable set $\Lambda\subset{\mathbb R}^d$ such that the set of exponential functions $E(\Lambda): = \{e^{2\pi i \lambda \cdot x}:\lambda\in\Lambda\}$ forms an orthonormal basis for $L^2(\mu)$.  If such $\Lambda$ exists, then $\Lambda$ is called a {\it spectrum} for $\mu$.
\end{definition} 

The research of spectral measures was originated from Fuglede \cite{F1974}, whose famous conjecture asserted that $\chi_{\Omega}dx$ is a spectral measure if and only if $\Omega$ is a translational tile. Although the conjecture was disproved eventually \cite{KM1, KM2, T2004}, the problem has led to the development of many new research problems and  determining when a  measure is spectral is still an active research area.

\medskip

In recent years, the study of spectral measures is blooming in the fractal community.  Jorgensen and Pedersen \cite{JP1998} discovered that the standard middle-fourth Cantor measure is a spectral measure. It is the first spectral measure that is non-atomic and  singular to the Lebesgue measure ever discovered. In the same paper, they also showed that  the middle-third Cantor measure is not spectral. Following this discovery, there has been more research on self-similar/self-affine spectral measures \cite{D2012, DHL2013, DHL2014}, as well as the convergence properties of the associated Fourier series \cite{S2000, S2006, DHS2014} and the rescaling properties of a given spectrum \cite{DK2018, DH2016, FHW2018}. 

\medskip

The construction of these fractal spectral measures stem from the existence of Hadamard triples.

\begin{definition}
Let $N\geq2$ be an integer and let $B, L\subset\Z$ be finite sets with $\#L=\#B=M\le N$. We say that the system  $(N, B, L)$ forms a Hadamard triple if the matrix
$$H=\frac{1}{\sqrt{M}}\left[e^{-2\pi i\frac{bl}{N}}\right]_{b\in B, l\in L}$$
is unitary, i.e., $H^*H=I$.
\end{definition}

Soon after Jorgensen-Pedersen's discovery of the first spectral measure, Strichartz \cite{S2000} has already formulated the most general  fractal spectral measures one can possibly generate. Given a sequence of Hadamard triples $\{(N_n,B_n,L_n): n= 1,2,...\}$, one can generate a singular measure without atom using $\{(N_n,B_n)\}$ by 
$$
\mu(N_n,B_n) = \delta_{\frac{1}{N_1}B_1}\ast\delta_{\frac{1}{N_1N_2}B_2}\ast
\delta_{\frac{1}{N_1N_2N_3}B_3}\ast...
$$
where $\delta_A = \frac{1}{\#A}\sum_{a\in A}\delta_a$ and $\delta_a$ denotes  the Dirac measure at the point $a$. We call such measures {\it Cantor-Moran measures} as a generalization of the standard Cantor measure studied first by Moran \cite{Moran}. A natural question here is that 

\medskip

{\bf (Qu 1)}: Given a sequence of Hadamard triples $\{(N_n,B_n,L_n): n= 1,2,...\}$, when is $\mu(N_n,B_n)$ spectral?

\medskip

The Hadamard triple assumption tells us immediately that all Dirac measures in the convolution are actually spectral and we can easily find an infinite mutually orthogonal set of exponential functions using $L_n$. However, the completeness of the exponentials in $L^2(\mu(N_n,B_n))$ is a much harder problem. When all $N_n$ are equal and all $B_n$ are the same set $B$, the Cantor-Moran measure is reduced to the case of  self-similar measure generated by the iterated function system $\left\{f_b(x) = \frac1N (x+b)\right\}_{b\in B}$ with equal probability weights. It has been shown completely that all such self-similar measures  are spectral  by $\L$aba and Wang \cite{LW2002}.  Dutkay, Haussermann and Lai \cite{DHL2017} generalized it to  all self-affine measures in ${\mathbb R}^d$. 

\medskip

General non-self-similar spectral Cantor-Moran measure enriches our understanding of spectral measures. In 2014, An and He \cite{AH2014} showed that if $B_n = \{0,1,...,M_n-1\}$ with $N_n = M_nK_n$, then the resulting Cantor-Moran measure is always spectral. Gabardo and Lai \cite{GL2014} showed that these An-He constructed measures  are exactly  all the measures $\mu$ and $\nu$ that satisfy $\mu\ast\nu ={\mathcal L}_{[0,1]}$, the Lebesgue measure supported on $[0,1]$. In particular, it means that all probability measures that can be convoluted to the Lebesgue measure on [0,1] must be spectral. It offers a generalized tiling perspective of the spectral measures.    These Cantor-Moran measures also show that spectral measures can have support of any Hausdorff dimensions \cite{DS2015}. Last but not least, Cantor-Moran measures offer new examples of fractal measures that admits Fourier frame but not a Fourier orthonormal basis \cite{LW2017}, which leads to a new avenue to study a long-standing problem whether a middle-third Cantor measure has a Fourier frame. 

\medskip

Since then, intensive study on the spectral Cantor-Moran measures have been ongoing,  which attempts to answer the question  ({\bf Qu 1}). It is known however that the answer is negative in general (see Section 9). Nonetheless,  it is widely believed that negative examples are very rare. For instance,  Dutkay and Lai \cite{DL2017} showed that if there are only finitely many Hadamard triples of the form $(N,B_n,L)$, then if we randomly take convolution on these Hadamard triples, then  almost all Cantor-Moran measure are spectral. Furthermore, deterministic positive results have been appeared in many papers (e.g. \cite{AHL2015, AHH2018, HH2017,Shi2018,TF2018}). In all these papers, they all assume either there are only finitely many Hadamard triples in the sequence with a strong assumption on $L$, or $\#B_n\le 4$ (\# denotes cardinality). Except a handful of specific examples, all of the $B_n$ they considered are in $\{0,1,...,N_n-1\}$ . 

\medskip

\subsection{Main Result} In this paper, we focus on our Cantor-Moran measure supported inside $[0,1]$ (i.e. $B_n\subset\{0,1,...,N_n-1\}$). We essentially break through all the unnecessary specific assumptions on $L$ or small number of elements in  $B_n$. One of the main results, expressed in terms of $\#B_n$,  is presented as follows:
\begin{theorem}\label{maintheorem0}
Suppose that $\{(N_n,B_n,L_n)\}$ is a sequence of Hadamard triples with $B_n\subset \{0,1,..,N_n-1\}$ for all $n=1,2,...$. Suppose that $$\liminf_{n\rightarrow\infty}\#B_n<\infty.$$
Then the associated Cantor-Moran measure 
$$
\begin{aligned}
\mu = \mu(N_n,B_n) =& \delta_{\frac{1}{N_1}B_1}\ast\delta_{\frac{1}{N_1N_2}B_2}\ast
\delta_{\frac{1}{N_1N_2N_3}B_3}\ast...\\
\end{aligned}
$$
is spectral and it always admits a spectrum $\Lambda\subset{\mathbb Z}$. 
\end{theorem}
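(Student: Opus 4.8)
The plan is to produce an explicit candidate $\Lambda\subset\mathbb{Z}$ from the dual sets $L_n$, verify that $E(\Lambda)$ is orthonormal, and then reduce completeness to a single positivity estimate that the hypothesis $\liminf_n\#B_n<\infty$ is designed to guarantee. Normalizing so that $0\in B_n$, $0\in L_n$ and $L_n\subset\{0,\dots,N_n-1\}$ (the Hadamard property depends only on $L_n\bmod N_n$), I would set $\Lambda_n=\{\ell_1+N_1\ell_2+\cdots+N_1\cdots N_{n-1}\ell_n:\ell_k\in L_k\}$ and $\Lambda=\bigcup_n\Lambda_n$, an increasing union since $0\in L_k$. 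A direct computation with $\widehat{\mu}=\prod_k m_{B_k}(\cdot/(N_1\cdots N_k))$, where $m_{B}(s)=\frac{1}{\#B}\sum_{b\in B}e^{2\pi i bs}$, shows using unitarity of each Hadamard matrix that each $\Lambda_n$ is a spectrum for the finite convolution $\mu_n$ and that $E(\Lambda)$ is orthonormal in $L^2(\mu)$. This part is routine and does not use the cardinality hypothesis.

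For completeness I would use the standard criterion that $E(\Lambda)$ is an orthonormal basis iff $Q_\Lambda(\xi):=\sum_{\lambda\in\Lambda}|\widehat{\mu}(\xi+\lambda)|^2\equiv1$, where orthonormality already gives $Q_\Lambda\le1$ and $Q_\Lambda(0)=1$. Writing $\mu=\mu_n\ast\mu_{>n}$, $\delta_n=N_1\cdots N_n$, and $\nu_{>n}(\cdot)=\mu_{>n}(\delta_n^{-1}\cdot)$, one has $\widehat{\mu}(\xi+\lambda)=\widehat{\mu}_n(\xi+\lambda)\,\widehat{\nu}_{>n}((\xi+\lambda)/\delta_n)$, and for $\lambda\in\Lambda_n$ the points $\lambda/\delta_n$ lie in $[0,1)$. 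Since $\sum_{\lambda\in\Lambda_n}|\widehat{\mu}_n(\xi+\lambda)|^2=1$, the inclusion $\Lambda_n\subset\Lambda$ yields $Q_\Lambda(\xi)\ge\sum_{\lambda\in\Lambda_n}|\widehat{\mu}_n(\xi+\lambda)|^2\,|\widehat{\nu}_{>n}((\xi+\lambda)/\delta_n)|^2$. The goal becomes an \emph{equi-positivity} statement: a constant $\epsilon>0$ with $|\widehat{\nu}_{>n}(t)|\ge\epsilon$, uniformly in $n$, on the part of $[0,1)$ carrying a definite share of the weights $|\widehat{\mu}_n(\xi+\lambda)|^2$. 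Given such an $\epsilon$ one obtains $Q_\Lambda\ge\epsilon'>0$ everywhere, and then upgrades this to $Q_\Lambda\equiv1$ by the standard minimum/functional-equation argument: iterating the identity $Q_\Lambda(\xi)=\sum_{\ell\in L_1}|m_{B_1}((\xi+\ell)/N_1)|^2\,Q_{\Lambda^{(1)}}((\xi+\ell)/N_1)$, with $\Lambda^{(1)}$ the spectrum of the once-shifted Cantor--Moran measure and $\sum_{\ell}|m_{B_1}((\xi+\ell)/N_1)|^2=1$, expresses $Q_\Lambda$ as an average of spectral functions of the shifted measures along the expanding orbit $\xi\mapsto N_1\xi-\ell$, whose accumulation at $0$ (where every such function equals $1$) forces the common infimum to be $1$. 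The uniform lower bound $\epsilon$ is exactly what keeps the weights in this averaging genuinely positive.

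The main obstacle is establishing equi-positivity of $\{\nu_{>n}\}$, and this is where $\liminf_n\#B_n<\infty$ enters. I would argue by weak-$*$ compactness: every subsequence of the probability measures $\nu_{>n}$ on $[0,1]$ has a weakly convergent sub-subsequence, and equi-positivity is governed by the integral periodic zero set $\{t:\widehat{\nu}(t+k)=0\ \forall k\in\mathbb{Z}\}$ of the weak limits $\nu$, as indicated in the abstract. Under the hypothesis I can select a subsequence $n_j$ along which the leading digit sets have bounded cardinality $\#B_{n_j+1}\le C$; the corresponding weak limit $\nu$ is a Cantor--Moran measure whose $\widehat{\nu}$ is an infinite product of exponential polynomials $m_{B}$ each having at most $C$ terms. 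The key structural input is that a nonnegative-integer exponential polynomial with a bounded number of terms cannot be uniformly small on a long interval: its near-zeros are sparse and of controlled multiplicity, so $\widehat{\nu}$ cannot vanish along an entire $\mathbb{Z}$-coset and the integral periodic zero set is empty. This yields the uniform lower bound, hence equi-positivity.

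I expect the technical heart to be twofold: (a) making precise the quantitative ``no uniform smallness'' estimate for bounded-cardinality exponential sums and transferring it from the weak limit back to a bound uniform over all $\nu_{>n}$; and (b) possibly \emph{modifying} the choice of $L_n$ by grouping or re-indexing consecutive blocks (the abstract's ``suitably modifying $L_n$''), so that the relevant points $(\xi+\lambda)/\delta_n$ avoid the sparse zeros of $\widehat{\nu}_{>n}$. This bounded-cardinality phenomenon is exactly what breaks down in the extreme case: Bourgain's sum-of-sines example supplies digit sets with $\#B_n\to\infty$ for which $m_{B_n}$ is uniformly small on a large set, destroying equi-positivity, which is why that case is excluded here.
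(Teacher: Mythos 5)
Your overall architecture (orthogonality from the Hadamard property, completeness reduced to a uniform positivity of the pulled-back tails $\nu_{>n}$, with the bounded-cardinality hypothesis invoked to secure that positivity) matches the paper's strategy, and your first two paragraphs are essentially sound, modulo the fact that the paper certifies completeness via the Strichartz-type condition $\delta(\Lambda)=\inf_n\inf_{\lambda\in\Lambda_n}|\widehat{\mu}_{>n}(\lambda)|^2>0$ together with an inductive modification of the $L_n$ by multiples of $N_1\cdots N_n$, rather than via a $Q_\Lambda$ functional-equation argument (which is delicate for non-self-similar Moran measures, since the equation relates spectral functions of \emph{different} measures at each level). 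But there is a genuine gap in your third paragraph, and it sits exactly at the heart of the theorem. You claim that along a subsequence with $\#B_{n_j+1}\le C$ the weak limit $\nu$ of $\{\nu_{>n_j}\}$ is a Cantor--Moran measure whose integral periodic zero set is empty. This is false: the paper's own example $N_n=2^{2n}$, $B_n=\{0,2^{2n}-1\}$, $L_n=\{0,2^{2n-1}\}$ has $\#B_n=2$ for every $n$, yet $\{\nu_{>n}\}$ converges weakly to $\rho=\tfrac12(\delta_0+\delta_1)$, and $\widehat{\rho}(\tfrac12+k)=0$ for all $k\in\mathbb{Z}$, so ${\mathcal Z}(\rho)=\{1/2\}\ne\emptyset$. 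The weak limit need not inherit any product structure (the atoms of $\delta_{B_n/N_n}$ can migrate to $0$ and $1$ as $N_n\to\infty$), and by the paper's Theorem \ref{prop_Dirac_measure} the measure $\rho$ is precisely the one exception for which the admissibility route fails. So your mechanism proves only the case where $\{\nu_{>n}\}$ does \emph{not} converge to $\rho$ (Theorem \ref{maintheorem1}); in that case the cardinality hypothesis is not even needed.

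The case actually requiring $\liminf_n\#B_n<\infty$ is the one where $\{\nu_{>n}\}\to\rho$ weakly, and there the argument must be run on the individual discrete factors $\delta_{B_n/N_n}$ at the points $\tfrac12+k$, not on the weak limit of the tails. The paper's key computation (Theorem \ref{lemma_bounded_B}) uses Wiener's theorem on recovering atoms, in the form
\begin{equation*}
\frac{1}{\#B_n}=\frac{1}{N_n}\sum_{k=0}^{N_n-1}\Bigl|\widehat{\delta_{B_n/N_n}}\bigl(\tfrac12+k\bigr)\Bigr|^2 ,
\end{equation*}
so that when $\#B_n\le M$ the average is at least $1/M$; since the $k$ with $k/N_n$ close to $1/2$ form a vanishing proportion of the range, some $k$ with $k/N_n$ bounded away from $1/2$ must satisfy $|\widehat{\delta_{B_n/N_n}}(\tfrac12+k)|\ge\epsilon_0$. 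One then propagates this to $\widehat{\nu_{>(n-1)}}(\tfrac12+k)$ using the factorization and the uniform convergence of $\widehat{\nu_{>n}}$ to $\widehat{\rho}$ away from $\tfrac12$, and finally to equi-positivity of the whole family via equicontinuity and the explicit lower bound $|\widehat{\rho}(x)|=|\cos\pi x|$ off a neighborhood of $\tfrac12$. Your intuition that bounded-cardinality exponential sums cannot be uniformly small is the right moral, but it must be made quantitative at the level of the single digit set $B_n$ on the coset $\tfrac12+\mathbb{Z}$, with control on \emph{where} the large value occurs; without that, the step from ``weak limit analysis'' to a uniform $\epsilon$ does not go through.
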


\medskip

Theorem \ref{maintheorem0} will follow directly from Theorem \ref{maintheorem1} and Theorem \ref{maintheorem2}. We now outline the strategy of the proof. Let us set up the notations. For a given sequence of positive integers $N_n\ge 2$ and $B_n\subset\{0,1,...,N_n-1\}$, 
\begin{equation}\label{eq_CM}
\begin{aligned}
\mu = \mu(N_n,B_n) =& \delta_{\frac{1}{N_1}B_1}\ast\delta_{\frac{1}{N_1N_2}B_2}\ast
\delta_{\frac{1}{N_1N_2N_3}B_3}\ast...\\
=& \mu_n\ast\mu_{>n}\\
    \end{aligned}
    \end{equation}
where  $\mu_n$ is the convolutional product of the first $n$ discrete measures and $\mu_{>n}$ is the remaining part.  $\mu$ has support in the compact set
$$K_{\mu}=\left\{\sum_{j=1}^{\infty}\frac{b_j}{N_1\cdots N_j} :\  b_j\in B_j \text{ for all } j\right\}.$$
In particular, $K_{\mu}\subset[0,1]$.
For our further analysis, we will need the measure 
$$
\nu_{>n} (E) = \mu_{>n}((N_1....N_n)^{-1} E),
$$
which is the pull-pack measure of $\mu_{>n}$. If $\mu$ is supported on $[0,1]$, then $\mu_{>n}$ is supported on $[0,(N_1...N_n)^{-1}]$ and $\nu_{>n}$ is the pull back measure from $[0,(N_1...N_n)^{-1}]$ to $[0,1]$. It is also worth to note that if $\mu$ is self-similar, then $\nu_{>n} = \mu$ for all $n$. Let also 
$$
\rho = \frac12\left(\delta_0+\delta_1\right)
$$
be the equal-weighted Dirac mass measure at 0 and 1. For the precise definition of weak convergence of measures, see Section 2.  

\medskip

\begin{theorem}\label{maintheorem1}
Suppose that $\{(N_n,B_n,L_n)\}$ is a sequence of Hadamard triples with $B_n\subset \{0,1,..,N_n-1\}$ for all $n=1,2,...$. Suppose that $\{\nu_{>n}\}$ does not converge weakly to $\rho$.  Then the associated Cantor-Moran measure $\mu(N_n,B_n)$ is spectral and it always admits a spectrum $\Lambda\subset{\mathbb Z}$. 
\end{theorem}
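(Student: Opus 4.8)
The plan is to show that the exponential system $E(\Lambda)$ built from the (modified) sets $L_n$ is complete in $L^2(\mu)$ by reducing completeness to the \emph{equi-positivity} of the tail family $\{\nu_{>n}\}$, and then extracting that equi-positivity from the hypothesis $\nu_{>n}\not\to\rho$ through a weak-$*$ compactness argument together with a rigidity statement that singles out $\rho$.

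First I would fix the spectrum. Replacing each $L_n$ by a set of representatives of $L_n \bmod N_n$ inside $\{0,1,\dots,N_n-1\}$ and translating so that $0\in L_n$ (neither operation destroys the Hadamard property of any finite sub-tower), set $\Lambda_n=\{\sum_{j=1}^n \ell_j N_1\cdots N_{j-1}:\ell_j\in L_j\}\subseteq\{0,\dots,N_1\cdots N_n-1\}$ and $\Lambda=\bigcup_n\Lambda_n\subseteq\mathbb{Z}_{\ge 0}$. The tower of Hadamard triples makes $E(\Lambda_n)$ an orthonormal basis of $L^2(\mu_n)$, so $\sum_{\lambda\in\Lambda_n}|\widehat{\mu_n}(\xi+\lambda)|^2=1$ for every $\xi$, and $E(\Lambda)$ is orthonormal in $L^2(\mu)$. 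By the Jorgensen--Pedersen criterion, $E(\Lambda)$ is an orthonormal basis if and only if $Q(\xi):=\sum_{\lambda\in\Lambda}|\widehat{\mu}(\xi+\lambda)|^2\equiv 1$; since the partial sums $Q_n$ increase to $Q\le 1$, and since $\widehat{\mu}=\widehat{\mu_n}\,\widehat{\mu_{>n}}$ with $\widehat{\mu_{>n}}(\xi+\lambda)=\widehat{\nu_{>n}}\!\left(\tfrac{\xi+\lambda}{N_1\cdots N_n}\right)$, it remains to prove that
\[
Q_n(\xi)=\sum_{\lambda\in\Lambda_n}\bigl|\widehat{\mu_n}(\xi+\lambda)\bigr|^{2}\,\Bigl|\widehat{\nu_{>n}}\!\left(\tfrac{\xi+\lambda}{N_1\cdots N_n}\right)\Bigr|^{2}\longrightarrow 1 .
\]

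Here I would invoke the equi-positivity device (the analytic core of the theory): if there are $\varepsilon>0$, a finite set $J\subseteq\mathbb{Z}$, and infinitely many indices $n$ for which $\max_{j\in J}|\widehat{\nu_{>n}}(t+j)|\ge\varepsilon$ for all $t\in[0,1]$, then $Q_n(\xi)\to 1$ along those indices, and hence $Q\equiv 1$ by monotonicity of $Q_n$. (The mechanism is that an integer translate $t+j$ corresponds, after rescaling by $N_1\cdots N_n$, to a higher-level point $\lambda+jN_1\cdots N_n\in\Lambda$, so the translate-freedom of equi-positivity can be spent against the probability weights $|\widehat{\mu_n}(\xi+\lambda)|^2$.) Thus it suffices to verify equi-positivity of $\{\nu_{>n}\}$ along \emph{some} subsequence. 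Now the hypothesis enters: all $\nu_{>n}$ are probability measures on the fixed compact $[0,1]$, so by weak-$*$ compactness and $\nu_{>n}\not\to\rho$ I may pass to a subsequence $\nu_{>n_k}\to\nu$ weakly with $\nu\neq\rho$. The rigidity I would prove is that $\rho$ is the \emph{only} probability measure on $[0,1]$ whose integral periodic zero set $\{t:\widehat{\nu}(t+j)=0\ \text{for all } j\in\mathbb{Z}\}$ is nonempty: if $\widehat{\nu}(t_0+j)=0$ for every $j$, then the complex measure $e^{-2\pi i t_0 x}\,d\nu(x)$ has vanishing Fourier coefficients on $\mathbb{T}=\mathbb{R}/\mathbb{Z}$, forcing $\nu$ to be carried by $\{0,1\}$ with $\nu(\{0\})+e^{-2\pi i t_0}\nu(\{1\})=0$; positivity of the two masses then yields $e^{-2\pi i t_0}=-1$ and $\nu(\{0\})=\nu(\{1\})=\tfrac12$, i.e.\ $\nu=\rho$. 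Hence $\nu\neq\rho$ has empty integral periodic zero set, so by continuity and compactness of $[0,1]$ there exist $\varepsilon>0$ and finite $J$ with $\max_{j\in J}|\widehat{\nu}(t+j)|\ge 2\varepsilon$ for all $t$; and since weak-$*$ convergence of measures on $[0,1]$ makes the (uniformly Lipschitz) Fourier transforms converge uniformly on compacta, the same bound with $\varepsilon$ holds for $\widehat{\nu_{>n_k}}$ once $k$ is large. This is exactly equi-positivity along $\{n_k\}$, and the device then gives $Q\equiv 1$.

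The main obstacle is the equi-positivity device itself, i.e.\ proving that a uniform integer-translate lower bound on $\widehat{\nu_{>n}}$ forces $Q_n(\xi)\to 1$ (and not merely $Q\ge A>0$, which would be insufficient, since an orthonormal $E(\Lambda)$ is complete only when $Q\equiv 1$). This demands a uniform-in-$\xi$ understanding of how the probability weights $|\widehat{\mu_n}(\xi+\lambda)|^{2}$ distribute among the rescaled translate classes and how they can be matched, through the self-similar layering of $\Lambda$, to points where $\widehat{\nu_{>n}}$ stays bounded below. A secondary but genuine subtlety is that the hypothesis only furnishes equi-positivity along a subsequence (indeed $\rho$ may well occur as another subsequential limit), so the device must be applied in a form that tolerates this; the monotonicity $Q_n\uparrow Q$ is what reconciles the two.
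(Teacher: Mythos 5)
Your back half — the rigidity statement that $\rho$ is the unique measure in ${\mathcal P}([0,1])$ with nonempty integral periodic zero set (proved exactly as you sketch, via uniqueness of Fourier coefficients on ${\mathbb T}$), the weak-compactness extraction of a subsequence with limit $\nu\ne\rho$, and the upgrade to a uniform lower bound via equicontinuity of $\widehat{{\mathcal P}}([0,1])$ and compactness of $[0,1]$ — is essentially the paper's argument (its Sections 3--4: "admissibility" implies "equi-positivity"). The gap is in the front half, and you have correctly located it yourself: you never prove the "equi-positivity device," i.e.\ that a uniform integer-translate lower bound on $\widehat{\nu_{>n}}$ forces $Q_n(\xi)\to 1$ for the \emph{fixed} canonical $\Lambda_n=L_1+N_1L_2+\cdots+N_1\cdots N_{n-1}L_n$. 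Moreover the mechanism you gesture at is false as stated: for your $\Lambda$ (contained in $\{0,\dots,N_1\cdots N_n-1\}$ at level $n$), a point $\lambda+jN_1\cdots N_n$ with $j$ ranging over the translate set $J$ is in general \emph{not} an element of $\Lambda$, so the translate-freedom cannot literally be "spent against the probability weights" inside the fixed sum $Q_n$. Indeed the theorem's conclusion is that $\mu$ admits a spectrum obtained by \emph{suitably modifying} $L_n$ — the canonical choice need not work.

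The paper's resolution is to make the modification explicit rather than to prove $Q\equiv 1$ for the unmodified $\Lambda$. One first regroups consecutive convolution factors so that, at each stage $k$, the accumulated lower-level frequencies satisfy $|\lambda_{k-1}/(N_1\cdots N_{n_k})|<\delta_0$ (the equicontinuity radius), and then defines
$$
\Lambda_k=\Lambda_{k-1}+\bigl\{N_1\cdots N_{n_{k-1}}\,\ell_k+N_1\cdots N_{n_k}\,k_{x_{\ell_k},\nu_{>n_k}}:\ \ell_k\in{\bf L}_{n_{k-1}+1,n_k}\bigr\},
$$
where $k_{x,\nu}$ is exactly the integer translate furnished by equi-positivity at $x_{\ell_k}=\ell_k/(N_1\cdots N_{n_k})$. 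The added term is a multiple of $N_1\cdots N_{n_k}$, so it does not change residues modulo $N_1\cdots N_{n_k}$ and orthogonality (the Hadamard tower property) is preserved; and by construction $|\widehat{\mu_{>n_k}}(\lambda_k)|^2\ge\epsilon_0^2$ for every $\lambda_k\in\Lambda_k$, i.e.\ $\delta(\Lambda)>0$. Completeness then follows from the known Strichartz/{\L}aba--Wang-type criterion (Theorem \ref{theorem_LW}), which replaces your Jorgensen--Pedersen $Q\equiv 1$ computation entirely. If you substitute this adaptive construction for your "device," the rest of your argument goes through and recovers the theorem.
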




\medskip

We now study the case that $\{\nu_{>n}\}$ converges weakly to $\rho$. One can imagine that the support of the  Cantor-Moran measure is supported in a small neighborhood of the points 0 and 1. We have the following theorem.

\begin{theorem}\label{maintheorem2}
Suppose that $\{(N_n,B_n,L_n)\}$ is a sequence of Hadamard triples with $B_n\subset \{0,1,..,N_n-1\}$ for all $n=1,2,...$. Suppose that $\{\nu_{>n}\}$ converges weakly to $\rho$ and $\liminf_{ n\rightarrow\infty}\#B_n<\infty$.  Then the associated Cantor-Moran measure is a spectral measure. 
\end{theorem}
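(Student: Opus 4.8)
The plan is to realize spectrality through the Jorgensen--Pedersen criterion: for a countable $\Lambda\subset\mathbb{Z}$, writing $Q_\Lambda(\xi)=\sum_{\lambda\in\Lambda}|\widehat{\mu}(\xi+\lambda)|^2$, the set $E(\Lambda)$ is orthonormal exactly when $Q_\Lambda\le 1$ and is a basis exactly when $Q_\Lambda\equiv 1$. For $\Lambda$ I would take the usual tree set $\Lambda=\bigcup_n\Lambda_n$, $\Lambda_n=L_1+N_1L_2+\cdots+N_1\cdots N_{n-1}L_n$, but retain the freedom to replace each $l\in L_n$ by an arbitrary integer lift in $l+N_n\mathbb{Z}$. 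Since the Hadamard condition depends only on $L_n\bmod N_n$, every such choice keeps $E(\Lambda)$ orthonormal and keeps each $\Lambda_n$ a spectrum for the finite convolution $\mu_n$, i.e. $\sum_{\lambda\in\Lambda_n}|\widehat{\mu_n}(\xi+\lambda)|^2\equiv 1$. Thus orthonormality is free, and the whole problem is the lower bound on $Q_\Lambda$, which via the completeness machinery for Cantor--Moran towers reduces to an equi-positivity statement for the pulled-back tails $\nu_{>n}$.

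Using $\widehat{\mu}(\xi+\lambda)=\widehat{\mu_n}(\xi+\lambda)\,\widehat{\nu}_{>n}\!\big((\xi+\lambda)/(N_1\cdots N_n)\big)$ together with the spectrum identity for $\mu_n$, completeness follows once one produces $\varepsilon>0$ and infinitely many $n$ so that, for every $\eta$ in the range of the arguments above, a chosen integer translate gives $|\widehat{\nu}_{>n}(\eta+j)|\ge\varepsilon$. This is exactly where the hypothesis $\nu_{>n}\to\rho$ bites: $\widehat{\rho}(\xi)=\tfrac12(1+e^{2\pi i\xi})$ is $1$-periodic and vanishes at $\xi=\tfrac12$, so near $\eta=\tfrac12$ all \emph{bounded} integer translates keep $\widehat{\nu}_{>n}$ near the zero of $\widehat{\rho}$. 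Consequently equi-positivity with a fixed finite set of translates fails, and the naive tree spectrum need not be complete; this is the positive face of the Bourgain phenomenon that Section 9 turns into a genuine counterexample when cardinalities are unbounded.

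The hypothesis $\liminf\#B_n<\infty$ is what I would use to defeat this obstruction. Passing to a subsequence with $\#B_{n_k}=p$ fixed and cutting the convolution just before that digit set, $\widehat{\nu}_{>n_k-1}(\eta)=m_{B_{n_k}}(\eta/N_{n_k})\cdot(\text{tail})$, where the first factor is a sum of only $p$ exponentials and the tail is uniformly close to $1$ on bounded sets. The decisive feature is that a $p$-term exponential sum cannot be small along all translates: even where $m_{B_{n_k}}(\eta/N_{n_k})$ is near zero at $\eta\approx\tfrac12$, there is a \emph{large} integer translate $j\sim N_{n_k}$ returning the mask value to size $\gtrsim 1$. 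For the two-cluster digit sets $B_{n_k}=\{0,N_{n_k}-1\}$ forced by $\nu_{>n}\to\rho$ this is explicit (taking $j\approx(N_{n_k}-1)\eta$ rotates the mask value from $\approx 0$ to $\approx 1$); in general I would use compactness of the rescaled $p$-point configurations in $[0,1]^p$ to pass to a limiting configuration whose mask has only finitely many zeros per period, each of which a large translate can avoid. Crucially, such a large translate is produced by enlarging the integer lift of $L_{n_k}$, which is precisely the freedom kept in the construction of $\Lambda$.

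The hardest part, I expect, is to organize these branch-by-branch dodges into a single modified spectrum $\Lambda$ for which one $\varepsilon$ works for \emph{all} $\xi$ simultaneously while the chosen lifts stay consistent with the tree (so that $E(\Lambda)$ remains orthonormal and every $\Lambda_n$ remains a spectrum for $\mu_n$). At bottom this is a quantitative, translate-uniform non-vanishing estimate for bounded exponential sums --- exactly the statement whose failure, for unbounded numbers of terms, is Bourgain's construction. Once this uniform lower bound $|\widehat{\nu}_{>n_k}(\cdot)|\ge\varepsilon$ is secured for the modified family, $Q_\Lambda\ge\varepsilon^2$ and hence $Q_\Lambda\equiv 1$ follow from the completeness criterion, giving the claimed spectrum $\Lambda\subset\mathbb{Z}$.
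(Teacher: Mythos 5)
Your overall architecture agrees with the paper's: reduce spectrality to equi-positivity of the pulled-back tails, pass to a subsequence with $\#B_{n_k}=p$ fixed, find for each such level an integer translate $k$ with $k/N_{n_k}$ bounded away from $\frac12$ at which the single-level mask $\widehat{\delta_{B_{n_k}/N_{n_k}}}(\frac12+k)$ is bounded below, and then assemble a modified tree spectrum (this last assembly, which you flag as the hardest part, is in fact routine given the equicontinuity of $\widehat{{\mathcal P}}([0,1])$ and the freedom to regroup consecutive factors). The genuine gap is in the one step that actually carries the theorem: your proposed proof of the translate-uniform lower bound for the mask. Passing to a limit of the rescaled configurations $\{b/N_{n_k}:b\in B_{n_k}\}\subset[0,1]^p$ does not control the mask on the dual grid, because $\widehat{\delta_{B/N}}(\frac12+k)=\frac1p\sum_{b\in B}e^{-2\pi i b(\frac12+k)/N}$ depends on the integer frequencies $b$ themselves and not merely on their rescaled positions: the phases $bk/N \bmod 1$ do not converge as $N\to\infty$ even when $b/N$ does, and the ``limiting mask'' $\frac1p\sum_j e^{-2\pi i\tau_j\xi}$ (which may be identically $1$, e.g.\ when all $\tau_j=0$) says nothing about where the actual degree-$(N-1)$ trigonometric polynomial is large on the set $\{(\frac12+k)/N\}$. (Also, $\nu_{>n}\to\rho$ does not force $B_n=\{0,N_n-1\}$; it only forces $B_n/N_n$ to concentrate near $\{0,1\}$.)

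What replaces this in the paper is an averaging identity rather than a compactness argument: by Wiener's theorem on recovering atoms (equivalently, the fact that $\{e^{2\pi ikx}:k=0,\dots,N_n-1\}$ is a tight frame for $L^2(\delta_{B_n/N_n})$),
$$
\frac1{N_n}\sum_{k=0}^{N_n-1}\Bigl|\widehat{\delta_{B_n/N_n}}\Bigl(\frac12+k\Bigr)\Bigr|^2=\frac1{\#B_n}.
$$
If $\#B_n\le M$ along a subsequence, the at most $2\delta_0 N_n+1$ terms with $k/N_n\in(\frac12-\delta_0,\frac12+\delta_0)$ contribute at most about $2\delta_0<\frac1{2M}$ to this average, so some $k$ outside that window satisfies $|\widehat{\delta_{B_n/N_n}}(\frac12+k)|^2\ge\frac1{2M}$. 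The exclusion of the window is not cosmetic: it is needed so that the remaining tail factor $\widehat{\nu_{>n}}\bigl((\frac12+k)/N_n\bigr)$, which converges uniformly to $\widehat{\rho}$ and hence is bounded below only away from $\frac12+\Z$, is also bounded below --- your remark that the tail is ``uniformly close to $1$'' is inaccurate, since $(\frac12+k)/N_n$ is bounded but not small. An alternative correct route to the same mask bound, which the paper also records in Section 8, is the Tur\'an--Nazarov inequality applied to exponential sums with at most $2M$ terms. Without one of these quantitative inputs your argument does not close.
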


\begin{remark}

\begin{enumerate}
\item[(1)] Theorem \ref{maintheorem0} follows from Theorem \ref{maintheorem1} and Theorem \ref{maintheorem2} since any $\{\nu_{>n}\}$ either converges or does not converge to $\rho$. Therefore, the rest of the paper will be devoted to proving Theorem \ref{maintheorem1} and \ref{maintheorem2} and studying the remaining delicate case that we will describe in subsection 1.2.
   
   \medskip
   
    \item[(2)] Theorem \ref{maintheorem0} settles completely the spectrality question {\bf (Qu 1)} for Cantor-Moran measures with $\#B_n$ uniformly  bounded in $n$, in particular, for the generalized Bernoulli convolution (all $\#B_n =2$) studied in \cite{HH2017}.
    
    \medskip
    
    \item[(3)] We now sketch the idea of the proof of the theorems. Our main idea is to introduce two important concepts to analyze our probability measures $ \nu_{>n}$. They are called {\it equi-positivity} and {\it admissibility} (see Section 3 for the precise definitions). The following implication will be proved in Section 3. 
\begin{equation}\label{implication}
\{\nu_{>{n_j}}\} \ \mbox{is admissible} \Longrightarrow \{\nu_{>{n_j}}\} \ \mbox{is equi-positive} \Longrightarrow \mu(N_n,B_n)  \ \mbox{is spectral}
\end{equation}
where $\left\{n_j\right\}$ is some subsequence. Theorem \ref{maintheorem1} will correspond exactly to the admissible case. In all other cases, we show that spectrality  holds by  proving the equi-positivity assumption is satisfied.

\medskip

\item[(4)]  Our results also works if  the sequence of Hadamard triples $\{(N_n,B_n, L_n): n=1, 2, \dotsm\}$
is relaxed to an (almost-Parseval) frame triple tower condition as in \cite{LW2017}. After this relaxation, {\it Theorem \ref{maintheorem1}, Theorem \ref{maintheorem2} and Theorem \ref{maintheorem3} below will remain valid with conclusion that the associated Cantor-Moran measure will admit a Fourier frame, instead of an orthonormal basis} (see Section 2).  This reduces the problem of  generating a Fouirer frame for Cantor-Moran measures to construct a frame triple tower.  \end{enumerate}
\end{remark}

\medskip

\subsection{The remaining open case.} We are now left with the case that $\{\nu_{>n}\}$ converges weakly to $\rho$ and $\lim_{ n\rightarrow\infty}\#B_n=\infty$. This is the most delicate case that we cannot resolve  completely. It is directly related to a variant of  a sum of sine problem. To best of our knowledge, it was first studied by Bourgain \cite{B1984}. 

\medskip
\begin{definition}
{\bf (Bourgain's Sum of sine problem)} Let ${\mathcal B}$ be a collection of finite sets of positive integers. We say that the {\it  sum of sine problem holds for ${\mathcal B}$ } if  there exists $\epsilon_0>0$ and $\delta_0>0$ such that for any finite set of positive integers $B\in{\mathcal B}$, 
$$
\max_{x\in\left[0,\frac12-\delta_0\right]}\left|\sum_{b\in B} \sin (2\pi b x)\right| \ge \epsilon_0 (\#B).
$$
\end{definition}

\medskip

To describe our result, we need some extra notations.  For a given  sequence $\{(N_n,B_n)\}$ that generates a Cantor-Moran measure $\mu(N_n,B_n)$ and for any $1/3>\delta>0$, we define
 $$B_{n,\delta} = \{b\in B_n: b/N_n\not\in(\delta,1-\delta)\}$$
 and let 
 $$
 B_{n,\delta,0} = B_{n,\delta}\cap [0,N_n\delta], \   \ B_{n,\delta,1} = N_n- (B_{n,\delta}\cap N_n[1-\delta,1)).
 $$
 We say that $(N_n,B_n)$ is {\it symmetric} if for all sufficiently small $\delta>0$, we can find some $n_0$ such that for all $n\ge n_0$, $B_n = B_{n,\delta}$ and  $B_{n,\delta,1} = B_{n,\delta,0}\setminus\{0\}$. For clarity of the main results in the introduction, we describe only the symmetric case.

\medskip

\begin{theorem}\label{maintheorem3}
Let $\{(N_n,B_n,L_n)\}$ be a sequence of Hadamard triples with $B_n\subset \{0,1,..,N_n-1\}$ for all $n=1,2,...$ with $\{\nu_{>n}\}$ converging weakly to $\rho$ and $\lim_{ n\rightarrow\infty}\#B_n=\infty$. Suppose that $(N_n,B_n)$ is symmetric and there exists a subsequence $\{n_j\}$ such that the sum of sine problem holds for ${\mathcal B} = \{ B_{n_j,\delta,0}\}$, then the associated Cantor-Moran measure is a spectral measure. 
\end{theorem}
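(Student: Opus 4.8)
The plan is to reduce to \emph{equi-positivity} via the implication \eqref{implication} and then to verify it by hand using the sum of sine hypothesis; admissibility is not available here, since the weak limit $\rho$ has the nonempty integral periodic zero set $\tfrac12+\mathbb{Z}$. Fix $\delta>0$ small enough that the symmetry conclusion $B_n=B_{n,\delta}$ and $B_{n,\delta,1}=B_{n,\delta,0}\setminus\{0\}$ holds for all $n\ge n_0$ and that $\pi\delta<\epsilon_0/2$, where $\epsilon_0,\delta_0$ are the sum of sine constants for $\mathcal B=\{B_{n_j,\delta,0}\}$. Put $m_j=n_j-1$, so that the leading factor of $\widehat{\nu_{>m_j}}$ is the mask attached to $B_{n_j}$. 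Writing $h_n(\eta)=\frac1{\#B_n}\sum_{b\in B_n}e^{2\pi i b\eta/N_n}$, one has the factorization $\widehat{\nu_{>n}}(\eta)=h_{n+1}(\eta)\,\widehat{\nu_{>n+1}}(\eta/N_{n+1})$. To prove $\{\nu_{>m_j}\}$ equi-positive it suffices to produce $\epsilon_1>0$ so that for every $\xi\in[0,1]$ and every large $j$ some integer translate satisfies $|\widehat{\nu_{>m_j}}(\xi+k)|\ge\epsilon_1$. Because $\nu_{>n}\to\rho$ weakly, $\widehat{\nu_{>m_j}}\to\widehat\rho$ uniformly on compacta and $|\widehat\rho(\xi)|=|\cos\pi\xi|$ vanishes mod $1$ only at $\xi=\tfrac12$; hence for $\xi$ outside a fixed neighborhood of $\tfrac12+\mathbb{Z}$ the choice $k=0$ already gives $|\widehat{\nu_{>m_j}}(\xi)|\ge\tfrac12|\widehat\rho(\xi)|>0$. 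The entire difficulty is therefore localized at $\xi$ near $\tfrac12$.

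The decisive estimate is at $\xi=\tfrac12$, the nearby values being handled by continuity. The symmetry hypothesis says precisely $B_{n_j}=A\sqcup\bigl(N_{n_j}-(A\setminus\{0\})\bigr)$ with $A=B_{n_j,\delta,0}$. For $x=(\tfrac12+k)/N_{n_j}$ we have $e^{2\pi i N_{n_j}x}=-1$, so the near-$N_{n_j}$ cluster conjugates against the near-$0$ cluster and
\[
h_{n_j}\!\left(\tfrac12+k\right)=\frac{1}{\#B_{n_j}}\Bigl(1+2i\!\!\sum_{b\in A\setminus\{0\}}\!\!\sin(2\pi b x)\Bigr),
\qquad
\bigl|h_{n_j}(\tfrac12+k)\bigr|\ \ge\ \frac{2}{\#B_{n_j}}\Bigl|\sum_{b\in A}\sin(2\pi b x)\Bigr|,
\]
the last sum over $A\setminus\{0\}$ and over $A=B_{n_j,\delta,0}$ coinciding since the $b=0$ term vanishes. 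Choosing $x^\ast\in[0,\tfrac12-\delta_0]$ with $|\sum_{b\in A}\sin(2\pi b x^\ast)|\ge\epsilon_0\#A$ and then the integer $k$ for which the grid point $x=(\tfrac12+k)/N_{n_j}$ is within $\tfrac{1}{2N_{n_j}}$ of $x^\ast$, the bound $A\subset[0,N_{n_j}\delta]$ makes the derivative of $\sum_{b\in A}\sin(2\pi b\,\cdot)$ at most $2\pi\delta N_{n_j}\#A$, so the grid value loses at most $\pi\delta\#A$; together with $\#A/\#B_{n_j}\ge\tfrac12$ this yields $|h_{n_j}(\tfrac12+k)|\ge\epsilon_0-\pi\delta\ge\epsilon_0/2$.

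It remains to see that this same $k$ keeps the tail factor bounded below. By construction $x=(\tfrac12+k)/N_{n_j}\le\tfrac12-\delta_0+\tfrac{1}{2N_{n_j}}\le\tfrac12-\tfrac{\delta_0}2$ for large $j$, so $x$ avoids the unique bad point $\tfrac12$ of the limit; uniform convergence $\widehat{\nu_{>n_j}}\to\widehat\rho$ then gives $|\widehat{\nu_{>n_j}}(x)|\ge\tfrac12\sin(\pi\delta_0/2)$ for large $j$. Multiplying,
\[
\bigl|\widehat{\nu_{>m_j}}(\tfrac12+k)\bigr|=\bigl|h_{n_j}(\tfrac12+k)\bigr|\,\bigl|\widehat{\nu_{>n_j}}(x)\bigr|\ \ge\ \frac{\epsilon_0}{2}\cdot\frac12\sin\!\Bigl(\frac{\pi\delta_0}{2}\Bigr)=:\epsilon_1>0,
\]
and a short perturbation argument (replacing $e^{2\pi iN_{n_j}x}=-1$ by $e^{2\pi i\xi}$ and tracking the resulting $O(|\xi-\tfrac12|)$ error) extends the bound to all $\xi$ in a neighborhood of $\tfrac12$. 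Combined with the $k=0$ estimate away from $\tfrac12$, this establishes equi-positivity of $\{\nu_{>m_j}\}$, and \eqref{implication} then yields that $\mu$ is spectral.

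I expect the crux to be exactly the coupling in the last step: the integer translate $k$ must simultaneously make the mask $h_{n_j}$ large and keep the rescaled tail argument $x$ away from the periodic zero $\tfrac12$ of $\widehat\rho$. The gap $x\in[0,\tfrac12-\delta_0]$ hard-wired into the sum of sine problem is what reconciles these competing requirements; without it the optimal point could drift into the zero of $\widehat\rho$ and the tail factor would be uncontrolled. The only genuinely technical point is the passage from the continuous maximizer $x^\ast$ to an admissible grid point $(\tfrac12+k)/N_{n_j}$, which is why $\delta$ must be taken small ($\pi\delta<\epsilon_0/2$) and why the containment $B_{n_j,\delta,0}\subset[0,N_{n_j}\delta]$ is used to control the derivative.
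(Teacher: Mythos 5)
Your proposal is correct and follows essentially the same route as the paper: the central estimate (pick the sum-of-sine maximizer $x^\ast\in[0,\tfrac12-\delta_0]$, round to the grid point $(\tfrac12+k)/N_{n_j}$, control the perturbation via $B_{n_j,\delta,0}\subset[0,N_{n_j}\delta]$, and use $x^\ast\le\tfrac12-\delta_0$ to keep the rescaled tail argument away from the zero of $\widehat{\rho}$) is exactly the paper's proof of Theorem \ref{maintheorem3}, with Lemma \ref{lemma_discrete} and Proposition \ref{prop_equiv-positive} unrolled inline. The only cosmetic difference is that your ``short perturbation argument'' near $\xi=\tfrac12$ is handled in the paper more cleanly by the equicontinuity of $\widehat{{\mathcal P}}([0,1])$ rather than by perturbing the mask directly.
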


One of the main results of Bourgain \cite{B1984} is that  the sum of sine problem cannot hold for all finite sets of positive integers. He showed that there exists finite set of integers $B_n$ of cardinality $n$ such that 
$$
\max_{x\in[0,1]} \left|\sum_{b\in B_n}\sin (2\pi  b x)\right|\le C n^{2/3}
$$
where $C$ is an absolute constant independent of $n$ (see also Kahane's book \cite[p.79]{K1985} for a proof written in English). This result was later generalized to higher dimension \cite{BC2003}. There is also a related {\it cosine minimum conjecture} proposed by Chowla \cite{C1965} in the 1960s, studied by many authors including Bourgain \cite{B1986} and Kolountzakis  \cite{K1994-1,K1994-2},  remaining open as of today. For the history of these sum of sine/cosine problems  and their deep connections to different problems in classical harmonic analysis, one can refer to \cite{K1994-3}. 

\medskip

 As a consequence of the Bourgain's example, it leads us to the following surprising results.

\begin{theorem}\label{maintheorem4}
\begin{enumerate}
    \item There exists Cantor-Moran measure $\mu(N_n,B_n)$ such that the associated measure $\{\nu_{>n}\}$ does not have any equi-positive subsequence. 
    \item There exists Cantor-Moran measure $\mu(N_n,B_n)$ such that the associated measure $\{\nu_{>n}\}$, supported inside $[0,1]$, converges weakly to $\rho$ and $\{\widehat{\nu_{>n}}\}$ converges uniformly to $\widehat{\rho}$ on the non-compact set $\frac12+\Z$.
\end{enumerate}
\end{theorem}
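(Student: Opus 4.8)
The plan is to exhibit a single explicit Cantor–Moran measure that witnesses both statements at once: I will prove (2) and then deduce (1) from it. The first simplifying observation is that this theorem concerns \emph{only} the tail measures $\nu_{>n}$, their Fourier transforms, and equi-positivity — it says nothing about spectrality — so \emph{no Hadamard-triple hypothesis is needed}, and I am free to choose the pairs $(N_n,B_n)$ with $B_n\subset\{0,1,\dots,N_n-1\}$ at will. The idea is to feed Bourgain's sets into a symmetric digit structure. Bourgain's theorem supplies, for each $m$, a set $F_m$ of $m$ positive integers with $\max_{x\in\R}\big|\sum_{b\in F_m}\sin(2\pi bx)\big|\le Cm^{2/3}$; symmetrizing $F_m$ about $N/2$ makes $\nu_{>n}$ concentrate near $\{0,1\}$ (forcing weak convergence to $\rho$) while forcing its Fourier transform to be tiny on the zero set $\tfrac12+\Z$ of $\widehat{\rho}$.

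Concretely, choose cardinalities $m_n\to\infty$, let $F_n\subset\{1,\dots,R_n\}$ be a Bourgain set with $\#F_n=m_n$, pick $N_n$ with $N_n/R_n\to\infty$ (so the near-$0$ digits tend to $0$), and set $B_n=\{0\}\cup F_n\cup(N_n-F_n)$. Then $B_n\subset\{0,\dots,N_n-1\}$, $\#B_n=2m_n+1\to\infty$, and $(N_n,B_n)$ is symmetric in the sense of the paper with $B_{n,\delta,0}=\{0\}\cup F_n$. The main computation is at the half-integers: writing $\xi=\tfrac12+k$, the factor $e^{-2\pi i\xi}=-1$ collapses the contribution of the mirrored block $N_{n+1}-F_{n+1}$ against that of $F_{n+1}$, so that the real parts cancel and only the imaginary (sine) part survives, giving
\[\widehat{\nu_{>n}}\Big(\tfrac12+k\Big)=\Bigg(\frac{1-2i\sum_{a\in F_{n+1}}\sin\!\big(2\pi a(\tfrac12+k)/N_{n+1}\big)}{2m_{n+1}+1}\Bigg)\prod_{j>n+1}\frac{1}{\#B_j}\sum_{b\in B_j}e^{-2\pi i b(\frac12+k)/(N_{n+1}\cdots N_j)}.\]

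Now I apply Bourgain's bound with $x=(\tfrac12+k)/N_{n+1}$: since his estimate is a supremum over all real $x$, it holds uniformly in $k$, giving $\big|\sum_{a\in F_{n+1}}\sin(2\pi a(\tfrac12+k)/N_{n+1})\big|\le Cm_{n+1}^{2/3}$. As every remaining factor is an average of unimodular numbers and hence bounded by $1$, the first factor dominates and
\[\Big|\widehat{\nu_{>n}}\Big(\tfrac12+k\Big)\Big|\le\frac{\sqrt{1+4C^2m_{n+1}^{4/3}}}{2m_{n+1}+1}\le\frac{C'}{m_{n+1}^{1/3}}\longrightarrow0\]
uniformly in $k\in\Z$. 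Since $\widehat{\rho}(\tfrac12+k)=e^{-\pi i(\frac12+k)}\cos(\pi(\tfrac12+k))=0$, this is precisely uniform convergence of $\{\widehat{\nu_{>n}}\}$ to $\widehat{\rho}$ on the non-compact set $\tfrac12+\Z$. Separately, because the level-$(n{+}1)$ digits lie within $R_{n+1}/N_{n+1}\to0$ of $0$ or $1$ with masses tending to $\tfrac12$ each, and the remaining levels contribute $O(1/N_{n+1})$, one checks $\nu_{>n}\to\rho$ weakly; this establishes (2).

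Finally, (1) follows from (2): equi-positivity of any subsequence $\{\nu_{>n_j}\}$ would in particular furnish an $\epsilon>0$ bounding $|\widehat{\nu_{>n_j}}|$ from below along the integral periodic fiber over the zero $\xi=\tfrac12$, i.e.\ on $\tfrac12+\Z$; but we have just shown $\widehat{\nu_{>n}}\to0$ uniformly there, so no such $\epsilon$ can exist and no subsequence is equi-positive. The genuinely delicate point — and the only place a nontrivial external input enters — is the uniform-in-$k$ control of the sine sum: a digit set obeying the sum-of-sine lower bound of Theorem \ref{maintheorem3} would force $\big|\sum_{a\in F}\sin(\cdot)\big|\gtrsim\epsilon_0 m$ at some $k$, keeping $|\widehat{\nu_{>n}}(\tfrac12+k)|$ bounded below; it is exactly Bourgain's sublinear $O(m^{2/3})$ construction that defeats this for \emph{all} $k$ simultaneously, and checking that his estimate transfers to the frequencies $(\tfrac12+k)/N_{n+1}$ uniformly is the crux of the argument.
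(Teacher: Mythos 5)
Your construction is exactly the one the paper uses (Bourgain sets symmetrized about $N_n/2$ inside a rapidly growing $N_n$), and your central computation --- the collapse of the mirrored digits at $\xi=\tfrac12+k$ into a pure sine sum, estimated by Bourgain's $O(m^{2/3})$ bound, which applies at $x=(\tfrac12+k)/N_{n+1}$ for every $k$ by $1$-periodicity --- coincides with the paper's Lemma \ref{lemma_estimate} together with the estimate in its proof of Theorem \ref{maintheorem4}(i). Where you genuinely diverge is in the logical route from the single-level bound to the tail measures. The paper proves (i) first: it shows that $\max_k\bigl|\widehat{\delta_{B_n/N_n}}(\tfrac12+k)\bigr|\to0$, concludes that condition (i) of Theorem \ref{theorem_6.3} fails, and then invokes the implication (ii)$\Rightarrow$(i) of that theorem --- the delicate direction, proved via the iteration Lemma \ref{lemma_iterate} and a digit-by-digit claim --- before passing to Proposition \ref{prop_equiv-positive}; part (ii) of Theorem \ref{maintheorem4} is then read off from part (i). You instead prove (2) first and directly, using the one-line observation that
\[
\bigl|\widehat{\nu_{>n}}(\tfrac12+k)\bigr|\le\bigl|\widehat{\delta_{B_{n+1}/N_{n+1}}}(\tfrac12+k)\bigr|\le \frac{\sqrt{1+4C^2m_{n+1}^{4/3}}}{2m_{n+1}+1},
\]
since every remaining factor of the infinite product has modulus at most $1$; then (1) follows by testing equi-positivity at $x=\tfrac12$. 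This bypasses the hard direction of Theorem \ref{theorem_6.3} entirely and makes the proof of this particular theorem self-contained modulo Bourgain's example, at the cost of not exhibiting the failure of the sum-of-sine condition for the digit sets themselves as the paper's framework does. Your side remarks are also consistent with the paper's practice: no Hadamard-triple hypothesis is invoked in its proof either, and your verification of weak convergence to $\rho$ (support shrinking into $[0,\eta]\cup[1-\eta,1]$ with masses tending to $\tfrac12$ each) matches the paper's. The argument is correct.
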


\medskip

Part (i) in Theorem \ref{maintheorem4} shows that our method on checking the equi-positivity cannot be used to determine any (frame-)spectrality for the type of Cantor-Moran measure stated in (i). Part (ii) is perhaps another surprising result from the viewpoint of probability measure theory. It is well-known that the weak convergence of probability measure is equivalent to the uniform convergence of its Fourier transform on all compact sets. This example says that it is possible for a sequence of measures supported inside $[0,1]$ whose Fourier transform  converges on some non-compact sets uniformly. 

\medskip

Bourgain's example was a probabilistic construction. As far as we know, there is no known deterministic construction of the Bourgain's example available. In fact, we also check many  classes of finite set of integers, the sum of sine problems all holds (See Section 8). There may still be hope that the finite set of integers that can generate Hadamard triple satisfy the sum of sine problem, but we do not pursue here in this paper (See Section 9 for details). 




\medskip

\subsection{Organization of the paper.} We now outline the organization of the paper. In Section 2, we will review the notion of weak convergence of probability measures and introduce the known results we will need to prove our theorems. These known basic results can be found in \cite{DHL2017,LW2017}. We notice that all $N_n\ge 2$ and all $B_n\subset\{0,1,...,N_n-1\}$, so that the Cantor-Moran measures are all supported inside $[0,1]$.  

\medskip

In Section 3, we will introduce the two main definitions  {\it equi-positivity} and {\it admissibility} and prove the implication (\ref{implication}).

\medskip

In Section 4, we will show that $\{\nu_{>n}\}$ is admissible if and only if $\{\nu_{>n}\}$ does not converge weakly to $\rho$. Hence, Theorem \ref{maintheorem1}  will be proved.

\medskip

In Section 5, we will study equivalent conditions for non-admissible but equi-positive subsequence. In particular, if $\liminf_{n\rightarrow\infty}\#B_n<\infty$, then $\{\nu_{>n}\}$ still has an equi-positive subsequence. Thus, Theorem \ref{maintheorem2} will be proved as a consequence of Theorem \ref{lemma_bounded_B}.

\medskip

In Section 6, we will study the equivalent conditions for the remaining case that $\{\nu_{>n}\}$ converges weakly to $\rho$ and $\lim_{n\rightarrow\infty}\#B_n=\infty$ to have an equi-positive subsequence. 

\medskip

In Section 7, we will prove Theorem \ref{maintheorem3} and Theorem \ref{maintheorem4} using the Bourgain's example. We will discuss some other related results about the uniform convergence of Fourier transform of probability measures over a non-compact set.

\medskip

In Section 8, we will study the sum of sine problem over a different class of finite sets of integers. We will show that for several fairly large class of finite subsets of integers, the sum of sine problem indeed holds. 

\medskip

In Section 9, we will mention some open problems and  mention how our result can be adapted to more general Cantor-Moran measures whose support is outside $[0,1]$.

\medskip

\section{Notations and known results}
In this section, we will set up our main notation for the rest of our paper. We will also collect some known results that serve as the basis for our proofs. 

\subsection{Measure-theoretic Preliminaries}
Throughout the paper, the Fourier transform of a Borel probability measure $\mu$ on ${\mathbb R}^d$ is defined to be
$$
\widehat{\mu}(\xi) = \int e^{-2\pi i \xi\cdot x} d\mu(x).
$$
Let $K$ be a compact set on ${\mathbb R}^d$. We will consider the following space of functions and measures:

\medskip

 $C_b({\mathbb R}^d)$:  the set of all bounded continuous functions on ${\mathbb R}^d$

\medskip

 ${\mathcal M}(K)$:  the set of all complex Borel  measures supported on compact set $K\subset {\mathbb R}^d$, 
 
 \medskip
 
 ${\mathcal P}(K)$:  the set of all  Borel probability  measures supported on compact set $K\subset {\mathbb R}^d$,

\medskip

 $\widehat{{\mathcal P}}(K) = \{\widehat{\mu}: \mu\in {\mathcal P}(K)\}$.

\medskip

 \noindent It is well-known that $\widehat{{\mathcal P}}(K)\subset C_b({\mathbb R}^d)$. A sequence of measures $\mu_n\in {\mathcal P}(K)$ is said to {\it converge weakly} to a  measure $\mu\in{\mathcal P}(K)$ if for all $f\in C_b({\mathbb R}^d)$, we have 
 $$
 \int f d\mu_n {\longrightarrow} \int f d\mu
 $$
as $n\to\infty$.  For a complete and detailed exposition about weak convergence of probability measures, one may read \cite{C2010}. We now collect all the equivalent conditions about weak convergence in the following lemma. These conditions should be well-known.

\begin{lemma}[\cite{C2010}]\label{lem_weak_equivalence}
The following are equivalent.
\begin{enumerate}
 \item  $\{\mu_n\}$ converges weakly to a probability measure $\mu$. 
    \item For any open set $O$, $\mu(O)\le \liminf_{n\rightarrow \infty} \mu_n(O)$.
    \item For any compact set $K$, $\mu(K)\ge \limsup_{n\rightarrow \infty} \mu_n(K)$.
    \item For any $A$ such that $\mu (\partial A) = 0$, $\lim_{n\rightarrow\infty}\mu_n(A) =\mu(A)$.
    \item The Fourier transform $\{\widehat{\mu}_n(\xi)\}$ converges to $\widehat{\mu}(\xi)$ uniformly on all compact subsets of ${\mathbb R}^d$.
\end{enumerate}
\end{lemma}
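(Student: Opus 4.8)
The plan is to prove the standard Portmanteau cycle $(i)\Rightarrow(ii)\Rightarrow(iii)\Rightarrow(iv)\Rightarrow(i)$ and then attach the Fourier characterization via $(i)\Leftrightarrow(v)$, exploiting throughout that every measure here lives in $\mathcal{P}(K)$ for one fixed compact $K$, which lets us bypass any tightness or Prokhorov argument. For $(i)\Rightarrow(ii)$, given an open set $O$ I would approximate $\mathbf{1}_O$ from below by a monotone sequence of nonnegative bounded continuous functions, e.g. $f_k(x)=\min\{1,k\,\mathrm{dist}(x,O^c)\}\uparrow\mathbf{1}_O$; then $\int f_k\,d\mu=\lim_n\int f_k\,d\mu_n\le\liminf_n\mu_n(O)$, and monotone convergence as $k\to\infty$ gives $\mu(O)\le\liminf_n\mu_n(O)$. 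The equivalence $(ii)\Leftrightarrow(iii)$ is obtained by taking complements: since all measures are probability measures, $\mu_n(O)=1-\mu_n(O^c)$ converts a $\liminf$ statement for open sets into a $\limsup$ statement for closed sets, and because every closed subset of the common compact support is itself compact, the compact-set formulation in $(iii)$ coincides with the closed-set one.

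For $(ii)$ and $(iii)$ together $\Rightarrow(iv)$, I would sandwich $A$ between its interior and closure: applying $(ii)$ to $\mathrm{int}\,A$ and $(iii)$ to $\overline{A}$ yields $\mu(\mathrm{int}\,A)\le\liminf_n\mu_n(A)\le\limsup_n\mu_n(A)\le\mu(\overline{A})$, and the hypothesis $\mu(\partial A)=0$ forces $\mu(\mathrm{int}\,A)=\mu(\overline{A})=\mu(A)$, collapsing the whole chain to equality. For $(iv)\Rightarrow(i)$, I would reduce to $0\le f\le M$ and use the layer-cake identity $\int f\,d\lambda=\int_0^M\lambda(\{f>t\})\,dt$. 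Since $\partial\{f>t\}\subseteq\{f=t\}$ and the level sets $\{f=t\}$ are pairwise disjoint, at most countably many carry positive $\mu$-mass; hence $\mu(\partial\{f>t\})=0$ for Lebesgue-almost every $t$, and $(iv)$ gives $\mu_n(\{f>t\})\to\mu(\{f>t\})$ there. Bounded convergence on $[0,M]$ then produces $\int f\,d\mu_n\to\int f\,d\mu$, which is $(i)$.

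It remains to connect $(v)$. For $(i)\Rightarrow(v)$, each $x\mapsto e^{-2\pi i\xi\cdot x}$ is bounded and continuous, so $(i)$ gives pointwise convergence $\widehat{\mu_n}(\xi)\to\widehat{\mu}(\xi)$. To upgrade to uniform convergence on compacta I would use the common compact support through the uniform Lipschitz bound $|\widehat{\mu_n}(\xi)-\widehat{\mu_n}(\eta)|\le 2\pi\big(\sup_{x\in K}|x|\big)\,|\xi-\eta|$, which shows $\{\widehat{\mu_n}\}$ is uniformly equicontinuous; a pointwise-convergent equicontinuous family with continuous limit converges uniformly on every compact set by Arzel\`a--Ascoli. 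For $(v)\Rightarrow(i)$, uniform (hence pointwise) convergence of the transforms means $\int g\,d\mu_n\to\int g\,d\mu$ for every finite linear combination $g$ of the exponentials $e^{-2\pi i\xi\cdot x}$; by Stone--Weierstrass such $g$ are sup-norm dense in $C(K)$, and since the measures have total mass $1$ and are supported in $K$, a routine $3\varepsilon$-argument extends convergence to all $f\in C_b(\mathbb{R}^d)$.

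I expect the genuine content to sit in the two upgrade steps of $(i)\Leftrightarrow(v)$: equicontinuity from compact support for the forward uniform-convergence claim, and Stone--Weierstrass density for the converse. These are exactly the places where the standing assumption $\mu_n\in\mathcal{P}(K)$ does real work---without a fixed compact support one would instead need the full L\'evy continuity theorem together with a tightness argument. The only other subtlety is the disjoint-level-set countability step inside $(iv)\Rightarrow(i)$, which guarantees that the set of ``bad'' thresholds $t$ is $\mu$-null and hence Lebesgue-null.
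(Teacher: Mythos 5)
The paper does not prove this lemma at all: it is stated as a collection of well-known facts and cited to Chung's book \cite{C2010}, so there is no internal argument to compare against. Your proof is the standard Portmanteau cycle together with the usual Fourier characterization, and it is correct as written; the only points that deserve explicit mention are exactly the ones you flag, namely that the standing hypothesis $\mu_n\in\mathcal{P}(K)$ (and, implicitly, that the limit $\mu$ also lies in $\mathcal{P}(K)$) is what lets you (a) pass between the closed-set and compact-set versions of (iii) by intersecting with $K$, (b) replace the L\'evy continuity theorem by equicontinuity plus pointwise convergence in $(i)\Rightarrow(v)$, and (c) run Stone--Weierstrass on $C(K)$ and the $3\varepsilon$-argument in $(v)\Rightarrow(i)$ without a tightness step. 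This is consistent with how the lemma is used in the paper, where all measures under consideration are supported in a fixed compact set.
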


Furthermore, it is also known that weak compactness theorem holds: any sequence of probability measures $\{\mu_n\}\subset {\mathcal P}(K)$ has a weakly convergent subsequence $\{\mu_{n_k}\}$ converging to a {\it probability measure} $\mu$. This fact and the lemma will be used  frequently in our exposition. 

\medskip

\subsection{Equicontinuity.} 

A family of functions $\Phi\subset C_b({\mathbb R}^d)$ is called {\it equicontinuous} if for any $\epsilon>0$, there exists $\delta>0$ such that whenever  $\|x-y\|<\delta$, we have $|f(x)-f(y)|<\epsilon$ for all $f\in \Phi$, where $\|\cdot\|$ denotes the Euclidean norm on ${\mathbb R}^d$. The following lemma should be well-known also.

\begin{lemma}\label{equicontinuity}
Let $K$ be a compact set on ${\mathbb R}^d$. Then  $\widehat{{\mathcal P}}(K)$ is equicontinuous.
\end{lemma}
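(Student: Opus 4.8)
The plan is to prove a quantitative statement slightly stronger than equicontinuity, namely that every member of $\widehat{\mathcal{P}}(K)$ is Lipschitz with one common Lipschitz constant depending only on $K$. Fix $\mu\in\mathcal{P}(K)$ and two frequencies $\xi,\eta\in\mathbb{R}^d$. Using the integral representation of the Fourier transform and pulling the modulus inside the integral, I would write
$$
\abs{\widehat{\mu}(\xi)-\widehat{\mu}(\eta)}\le \int_K\abs{e^{-2\pi i\xi\cdot x}-e^{-2\pi i\eta\cdot x}}\,d\mu(x),
$$
so that the whole problem reduces to bounding the integrand uniformly in $x\in K$ and independently of $\mu$.

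For the integrand, the key step is to factor $e^{-2\pi i\xi\cdot x}-e^{-2\pi i\eta\cdot x}=e^{-2\pi i\eta\cdot x}\bigl(e^{-2\pi i(\xi-\eta)\cdot x}-1\bigr)$ and invoke the elementary inequality $\abs{e^{i\theta}-1}\le\abs{\theta}$. Since the leading exponential factor has modulus $1$, this yields
$$
\abs{e^{-2\pi i\xi\cdot x}-e^{-2\pi i\eta\cdot x}}=\abs{e^{-2\pi i(\xi-\eta)\cdot x}-1}\le 2\pi\,\abs{(\xi-\eta)\cdot x}\le 2\pi\norm{\xi-\eta}\,\norm{x},
$$
where the last inequality is Cauchy--Schwarz. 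Here the compactness of $K$ enters: setting $R=\sup_{x\in K}\norm{x}<\infty$, the integrand is at most $2\pi R\norm{\xi-\eta}$ for every $x\in K$, a bound that no longer sees $x$ or $\mu$.

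Integrating this pointwise estimate and using that $\mu$ is a probability measure, so $\mu(K)=1$, gives $\abs{\widehat{\mu}(\xi)-\widehat{\mu}(\eta)}\le 2\pi R\norm{\xi-\eta}$ for all $\mu\in\mathcal{P}(K)$ simultaneously. Given $\epsilon>0$ one then takes $\delta=\epsilon/(2\pi R)$ (with the trivial case $R=0$, where the Fourier transforms are constant, handled separately), and equicontinuity---indeed uniform equicontinuity---follows at once. The argument is essentially routine; the only substantive points, and the ones I would state explicitly, are that the uniformity in $x$ comes from the finiteness of $R$ (compactness of $K$) while the uniformity in the measure comes from the normalization $\mu(K)=1$. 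There is no genuine obstacle beyond tracking these two sources of uniformity.
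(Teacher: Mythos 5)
Your proof is correct and follows essentially the same route as the paper: both reduce to the elementary bound $|e^{i\theta}-1|\le|\theta|$ applied to $e^{-2\pi i(\xi-\eta)\cdot x}-1$, use the boundedness of $K$ to control $\|x\|$, and obtain a Lipschitz constant of the form $2\pi R\|\xi-\eta\|$ uniform over all $\mu\in\mathcal{P}(K)$. The only cosmetic difference is that the paper bounds $\int_K\|x\|\,d\mu(x)$ by a constant while you bound the integrand pointwise by $\sup_{x\in K}\|x\|$; these are the same estimate.
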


\begin{proof}
Using an elementary inequality $|e^{i\theta}-1| \le |\theta|$ for any $\theta\in{\mathbb R}$, we have that  for any $\mu\in {\mathcal P}(K)$,

\begin{eqnarray*}
|\widehat{\mu}(\xi_1)- \widehat{\mu}(\xi_2)|& =& \left| \int e^{2\pi i \xi_1 \cdot x}-e^{2\pi i \xi_2 \cdot x}d\mu(x)\right| \\
&\le& \int  \left| e^{2\pi i (\xi_1-\xi_2)\cdot x}-1\right|d\mu(x)\\
&\le & 2\pi \|\xi_1-\xi_2\| \int_K \|x\|d\mu(x) \\
&\le &  2\pi C \|\xi_1-\xi_2\|  \ \  \mbox{(since $\mu$ is supported on $K$ and $K$ is bounded)}.\\
\end{eqnarray*}
The equicontinuity follows from this inequality as the upper Lipschitz bound is independent of  $\mu$.
\end{proof}

\medskip

\subsection{(Frame-)spectral Cantor-Moran measure.} 
This subsection follows closely with the framework in \cite{LW2017} and the more  general higher dimensional results presented in \cite{DEL2018}. We say that $\{e^{2\pi i \lambda\cdot x}:\lambda\in \Lambda\}$ forms a {\it Fourier frame} for a Borel probability measure $\mu$ if there exist $0<A\le B<\infty$ such that for all $f\in L^2(\mu)$, 
$$
A \int |f|^2d\mu \le \sum_{\lambda\in\Lambda} |\int f(x)e^{-2\pi i \lambda\cdot x}d\mu(x)|^2\le B\int|f|^2d\mu.
$$
If such Fourier frame exists, $\mu$ is called a {\it frame-spectral measure} and $\Lambda$ is called a {\it frame spectrum} for $\mu$. $A,B$ are respectively called the {\it lower and upper frame bound.} 

\medskip

\begin{definition}
{\rm(i)} Let $N\ge 2$ be a positive integer and let $B, L$ be two finite sets of integers. We say that $(N,B,L)$ forms a {\it frame triple} if there exist  constants $0<c_1\le c_2<\infty$ such that 
$$
c_1\|{\bf w}\| \le \|H{\bf w}\|\le c_2\|{\bf w}\|, \quad \forall\ {\bf w}\in{\mathbb C}^{M}
$$
where $M=\# B$ and matrix
$$H=\frac{1}{\sqrt{M}}\left[e^{-2\pi i\frac{bl}{N}}\right]_{ b\in B, l\in L}.$$ {\rm(ii)} We say that $\{(N_n,B_n,L_n)\}$ is a {\it frame triple tower} if each triple $(N_n, B_n, L_n)$ forms a frame triple with the associated constants $c_1,c_2$ equal to $1-\epsilon_n$ and $1+\epsilon_n$, where $0\le \epsilon_n<1$ and $\sum_{n=1}^{\infty}\epsilon_n<\infty.$ 
\end{definition}

Frame triple is generalized from Hadamard triple since $(N,B,L)$ forms a Hadamard triple if and only if $\|H{\bf w}\| = \|{\bf w}\|$.  When all $\epsilon_n=0$, the frame tower is reduced to the  {\it Hadamdard triple tower} (it is also called the {\it compatible tower} in \cite{S2000}).

\medskip

We will now outline the main theorem that allows us to construct a Fourier basis for the Cantor-Moran measure. For the interest of studying the more general problems about Fourier frame construction in the future, we will state our theorems in terms of frame triple.  



\medskip
We note that by a simple translation, there is no loss of generality to assume $0\in B_n\cap L_n$ for all $n$.  Hence, throughout the rest of the paper until Section 8,  the following will be assumed. 

\medskip

\noindent{\bf Assumption:} For each $n=1, 2, \cdots$, 
\begin{enumerate}
    \item we will assume that $B_n \subset \{0, 1, \cdots, N_n-1\}$ and $0\in B_n$.
    \item there exists $L_n$   with $0\in L_n$ and elements in $L_n$ are in distinct modulo class $(\mod N_n)$  such that $(N_n, B_n, L_n)$ forms a frame triple with bounds $1\pm\epsilon_n$.
\end{enumerate}
   The following theorem is known (see \cite{DHL2017,LW2017}). It is the fundamental theorem on which our analysis is based.

\medskip

\begin{theorem}[\cite{DHL2017, LW2017}]\label{theorem_LW}
Let $\{(N_n,B_n,L_n)\}$ be a  frame triple tower  with  bounds $1\pm\epsilon_n$ and $\sum_{n=1}^{\infty}\epsilon_n<\infty$. Let 
$$
\Lambda_n = L_1+N_1L_2+\cdots+N_1...N_{n-1}L_n, \ \mbox{and} \ \Lambda = \bigcup_{n=1}^{\infty}\Lambda_n.
$$
Suppose that 
$$
\delta(\Lambda): = \inf_{n\ge 1}\inf_{\lambda_n\in\Lambda_n} |\widehat{\mu}_{>n}(\lambda_n)|^2 >0.
$$
Then the Cantor-Moran measure $\mu$ is a frame-spectral measure with a frame-spectrum $\Lambda$ and frame bounds are $\prod_{n=1}^{\infty}(1-\epsilon_n), \prod_{n=1}^{\infty}(1+\epsilon_n)$. In particular, if $\{(N_n,B_n,L_n)\}$ forms a  Hadamard triple tower, then $\mu$ is a spectral measure with a spectrum $\Lambda$.
\end{theorem}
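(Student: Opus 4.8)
The plan is to verify the two frame inequalities for $E(\Lambda)$ on $L^2(\mu)$ directly, exploiting the factorization $\widehat{\mu}(\xi)=\widehat{\mu_n}(\xi)\,\widehat{\mu_{>n}}(\xi)$ (from $\mu=\mu_n\ast\mu_{>n}$) and the nesting $\Lambda_1\subset\Lambda_2\subset\cdots$, which holds because $0\in L_n$ for every $n$. For a compactly supported probability measure the Bessel bound for $E(\Lambda)$ is governed by $Q_\Lambda(\xi)=\sum_{\lambda\in\Lambda}|\widehat{\mu}(\xi+\lambda)|^2$, and in the Hadamard (orthonormal) case completeness is equivalent to $Q_\Lambda\equiv 1$ by the Jorgensen--Pedersen criterion. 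Since $\Lambda_n\nearrow\Lambda$, each finite sum $\sum_{\lambda\in\Lambda_n}|\widehat{\mu}(\xi+\lambda)|^2$ increases to $Q_\Lambda(\xi)$, so the whole problem reduces to controlling these partial sums uniformly in $n$ and then passing to the limit.

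First I would record the single-scale statement. The frame-triple bound $c_1\|\mathbf w\|\le\|H\mathbf w\|\le c_2\|\mathbf w\|$ with $c_1=1-\epsilon_n$, $c_2=1+\epsilon_n$ says \emph{exactly} that $E(L_n)$ is a frame for $L^2(\delta_{\frac1{N_n}B_n})$ with bounds $(1-\epsilon_n)^2,(1+\epsilon_n)^2$; this is the finite-dimensional computation identifying the analysis operator of $E(L_n)$ with $\tfrac1{\sqrt{\#B_n}}H$. A short induction on $n$, using $\mu_n=\mu_{n-1}\ast\delta_{\frac1{N_1\cdots N_n}B_n}$, the addition structure $\Lambda_n=\Lambda_{n-1}+N_1\cdots N_{n-1}L_n$, the factorization of $\widehat{\mu_n}$ into scale-separated masks, and the fact that the elements of $L_n$ lie in distinct residues modulo $N_n$, then shows the single-scale bounds multiply:
\[
\prod_{k=1}^{n}(1-\epsilon_k)^2\ \le\ \sum_{\lambda\in\Lambda_n}|\widehat{\mu_n}(\xi+\lambda)|^2\ \le\ \prod_{k=1}^{n}(1+\epsilon_k)^2,\qquad\text{for all }\xi,
\]
with every quantity equal to $1$ in the Hadamard case, so that $\Lambda_n$ is a spectrum for the discrete measure $\mu_n$. (These products are the asserted frame bounds $\prod(1\pm\epsilon_n)$ in the squared normalization of the frame-triple constants, and $\sum\epsilon_n<\infty$ guarantees convergence to a positive number.) The Bessel bound for $\mu$ is now immediate: since $|\widehat{\mu_{>n}}|\le 1$,
\[
\sum_{\lambda\in\Lambda_n}|\widehat{\mu}(\xi+\lambda)|^2=\sum_{\lambda\in\Lambda_n}|\widehat{\mu_n}(\xi+\lambda)|^2\,|\widehat{\mu_{>n}}(\xi+\lambda)|^2\ \le\ \prod_{k=1}^{n}(1+\epsilon_k)^2,
\]
and letting $n\to\infty$ gives $Q_\Lambda\le\prod_k(1+\epsilon_k)^2$, the upper frame inequality.

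The heart of the matter, and where the hypothesis $\delta(\Lambda)>0$ is genuinely used, is the lower frame bound (equivalently, completeness in the Hadamard case). Here I would write the defect, using the spectrum identity $\sum_{\lambda\in\Lambda_n}|\widehat{\mu_n}(\xi+\lambda)|^2=1$:
\[
1-\sum_{\lambda\in\Lambda_n}|\widehat{\mu}(\xi+\lambda)|^2=\sum_{\lambda\in\Lambda_n}|\widehat{\mu_n}(\xi+\lambda)|^2\bigl(1-|\widehat{\mu_{>n}}(\xi+\lambda)|^2\bigr)=:R_n(\xi)\ge 0,
\]
so that $Q_\Lambda(\xi)=1-\lim_{n}R_n(\xi)$ and completeness amounts to $R_n(\xi)\to 0$. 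Because $\mu_{>n}$ is supported on $[0,(N_1\cdots N_n)^{-1}]$ and collapses to $\delta_0$, the factor $1-|\widehat{\mu_{>n}}(\eta)|^2$ is small for $\eta$ in any bounded range, so the contribution to $R_n(\xi)$ from bounded $\lambda$ vanishes. The real danger is a non-vanishing contribution from the \emph{unbounded} part of $\Lambda_n$, where $|\xi+\lambda|$ is comparable to $N_1\cdots N_n$ and the tail factor oscillates and need not be close to $1$; this is precisely why one cannot simply replace $\widehat{\mu_{>n}}$ by $1$ and reduce the frame sum for $\mu$ to the known frame sum for $\mu_n$. Ruling out such escape of frame mass to infinity is exactly the job of the uniform bound $\delta(\Lambda)=\inf_n\inf_{\lambda\in\Lambda_n}|\widehat{\mu_{>n}}(\lambda)|^2>0$, which keeps $\widehat{\mu_{>n}}$ bounded away from $0$ along $\Lambda_n$ simultaneously for all $n$. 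I expect this step --- making the ``no loss of mass at infinity'' precise by a weak-compactness argument on the rescaled tails $\nu_{>n}$ (whose weak limits are probability measures, by the compactness theorem in Section~2), and thereby upgrading the crude positive lower bound to the sharp constant $\prod_k(1-\epsilon_k)^2$ --- to be the main obstacle and the true technical content of the theorem. For the general frame (non-orthogonal) case the same mechanism controls the lower frame bound, but one must run it on the analysis operator applied to arbitrary $f\in L^2(\mu)$ rather than only on the exponential test family encoded by $Q_\Lambda$.
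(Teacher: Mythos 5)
Your proposal does not close the part of the theorem that actually requires the hypothesis $\delta(\Lambda)>0$, namely the lower frame bound (equivalently, completeness in the Hadamard case). The upper bound and the single-scale identity $\sum_{\lambda\in\Lambda_n}|\widehat{\mu_n}(\xi+\lambda)|^2=1$ (resp.\ pinched between $\prod(1\pm\epsilon_k)^2$) are fine, and your reduction of completeness to $R_n(\xi)\to 0$, where $R_n(\xi)=\sum_{\lambda\in\Lambda_n}|\widehat{\mu_n}(\xi+\lambda)|^2\bigl(1-|\widehat{\mu_{>n}}(\xi+\lambda)|^2\bigr)$, is a correct reformulation of the Jorgensen--Pedersen criterion. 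But the hypothesis $\inf_n\inf_{\lambda\in\Lambda_n}|\widehat{\mu_{>n}}(\lambda)|^2\ge\delta>0$ only gives $1-|\widehat{\mu_{>n}}(\xi+\lambda)|^2\le 1-\delta+o(1)$, hence $R_n(\xi)\le 1-\delta+o(1)$ and $Q_\Lambda(\xi)\ge\delta$; it does not make $R_n$ small. To conclude along your route you would need either to show that the weights $|\widehat{\mu_n}(\xi+\lambda)|^2$ concentrate where the tail transform is close to $1$ (which $\delta(\Lambda)>0$ does not say), or to invoke a lemma that an orthonormal family with $Q_\Lambda$ bounded below is automatically complete --- a nontrivial claim you neither state nor prove. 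You explicitly flag this step as ``the main obstacle,'' and you also defer the general frame case (``run it on the analysis operator applied to arbitrary $f$''), so the proof is not complete as written.

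The cited proof (Strichartz/Lai--Wang/Dutkay--Haussermann--Lai, and the computation sketched in this paper's source) sidesteps $Q_\Lambda$ entirely and is worth internalizing: test the frame inequality on the dense \emph{subspace} $\mathcal S=\bigcup_n\mathcal S_n$ of level-$n$ step functions $f=\sum_{\mathbf b}w_{\mathbf b}\mathbf 1_{K_{\mathbf b}}$. A direct computation from $\mu=\mu_n\ast\mu_{>n}$ gives $\langle f,e_\lambda\rangle=\frac{1}{\mathbf M_n}\widehat{\mu_{>n}}(\lambda)\sum_{\mathbf b}w_{\mathbf b}e^{-2\pi i\mathbf b\lambda}$ for $\lambda\in\Lambda_n$, so the exact (almost-)Parseval identity of the finite tensor-product Hadamard/frame matrices, combined with the pointwise bound $|\widehat{\mu_{>n}}(\lambda)|^2\ge\delta(\Lambda)$ \emph{at the lattice points only}, yields $\sum_{\lambda\in\Lambda_n}|\langle f,e_\lambda\rangle|^2\ge\delta(\Lambda)\prod_k(1-\epsilon_k)^2\|f\|^2$ immediately; a lower frame inequality on a dense subspace extends to $L^2(\mu)$, and an orthonormal frame is an orthonormal basis. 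This is where $\delta(\Lambda)>0$ enters, and it handles the frame and Hadamard cases simultaneously. I recommend you either adopt this argument or supply the missing lemma justifying the passage from $Q_\Lambda\ge\delta$ to completeness.
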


\begin{remark}

\begin{enumerate}
\item Since $\sum_{n=1}^{\infty}\epsilon_n<\infty$, the products $\prod_{n=1}^{\infty}(1\pm\epsilon_n)$ are all finite and thus we have a finite frame bound as in the theorem.
    \item $\delta(\Lambda)>0$ is equivalent to  the condition proposed by Strichartz \cite{S2000}, who originally formulated it as the uniformly separated condition of the points in $\Lambda_n$ from some compact sets.
    \item If $B_n$ is not a subset of $\{0,1,...,N_n-1\}$, we will need an extra measure-theoretic  no-overlap condition (See Section 9). The no-overlap condition is known to be satisfied for $B_n\subset\{0,1,...,N_{n}-1\}$.
\end{enumerate}
\end{remark}

\medskip

\subsection{Factorization of Cantor-Moran measures} For the Cantor-Moran measure given in (\ref{eq_CM}), we can factorize some of the consecutive  factors, so that we have another representation of the same measure. By doing so, we have created a large flexibility for the construction of a (frame-)spectrum using Theorem \ref{theorem_LW}.

\medskip

To perform it precisely, suppose that $\{(N_n,B_n,L_n)\}$ is a sequence of Hadamard triples.  We define ${\bf B}_{n,m}$ to be a set of integers satisfying
$$
\frac{\textbf{B}_{n, m}}{N_n...N_m}=\frac{B_n}{N_n}+\frac{B_{n+1}}{N_nN_{n+1}}+\cdots+\frac{B_{m}}{N_n...N_m}$$
and 
$$
\textbf{L}_{n, m}=L_n+N_nL_{n+1}+\cdots+(N_n...N_{m-1})L_{m}.
$$
 The following lemma is also known, whose proof can be found in \cite[Proposition 3.1]{LW2017}.
 
 \begin{lemma}
 For any $n<m$ and for any $\widetilde{{\bf L}_{n,m}} \equiv {\bf L}_{n,m}$ mod $(N_n\cdots N_m$), then $\{(N_n\cdots N_m, {\bf B}_{n,m}, \widetilde{{\bf L}_{n,m}})\}$ also forms a frame triple tower. 
 \end{lemma}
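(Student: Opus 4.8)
The plan is to reduce the whole statement to a single algebraic fact, namely that \emph{composing two consecutive frame triples produces a frame triple whose bounds are the products of the two given pairs of bounds}, and then to iterate. First I would dispose of the freedom in $\widetilde{\mathbf{L}_{n,m}}$: since every $\mathbf{b}\in\mathbf{B}_{n,m}$ is an integer, the entry $e^{-2\pi i \mathbf{b}\widetilde{\mathbf{l}}/(N_n\cdots N_m)}$ depends on $\widetilde{\mathbf{l}}$ only modulo $N_n\cdots N_m$; hence the matrix attached to $(N_n\cdots N_m,\mathbf{B}_{n,m},\widetilde{\mathbf{L}_{n,m}})$ is literally identical to the one attached to $(N_n\cdots N_m,\mathbf{B}_{n,m},\mathbf{L}_{n,m})$ and has the same frame bounds. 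So there is no loss in taking $\widetilde{\mathbf{L}_{n,m}}=\mathbf{L}_{n,m}$ throughout.

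The core step is the two–factor composition. Writing $N'=N_n$, $N''=N_{n+1}\cdots N_m$, $B'=B_n$, $B''=\mathbf{B}_{n+1,m}$, $L'=L_n$, $L''=\mathbf{L}_{n+1,m}$, the defining relations give $\mathbf{b}=N''b'+b''$ and $\mathbf{l}=l'+N'l''$, and expanding the product yields the key identity
$$\frac{\mathbf{b}\,\mathbf{l}}{N'N''}\equiv \frac{b'l'}{N'}+\frac{b''l''}{N''}+\frac{b''l'}{N'N''}\pmod 1,$$
where the integer term $b'l''$ has been discarded. The first two terms are exactly what the tensor product $H'\otimes H''$ would contribute; the third is a genuine coupling term that prevents a literal factorization $H=H'\otimes H''$. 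The decisive observation is that this coupling enters only through the \emph{unimodular} phase $e^{-2\pi i b''l'/(N'N'')}$. Concretely, for $\mathbf{w}=(w_{l',l''})$ I would first apply $H''$ in the $l''$ variable, setting $v_{l',b''}=(H''w^{(l')})_{b''}$ with $w^{(l')}=(w_{l',\cdot})$; then multiply by the coupling phase, $u_{l',b''}=e^{-2\pi i b''l'/(N'N'')}v_{l',b''}$; then apply $H'$ in the $l'$ variable, so that $(H\mathbf{w})_{\cdot,b''}=H'u^{(b'')}$.

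Because $|u_{l',b''}|=|v_{l',b''}|$ pointwise, the partial estimates from the frame inequalities for $(N',B',L')$ and $(N'',B'',L'')$ chain together: $\|H\mathbf{w}\|^2=\sum_{b''}\|H'u^{(b'')}\|^2$ is squeezed between $(c_1')^2$ and $(c_2')^2$ times $\sum_{b''}\|u^{(b'')}\|^2=\sum_{l'}\|H''w^{(l')}\|^2$, which in turn is squeezed between $(c_1'')^2$ and $(c_2'')^2$ times $\|\mathbf{w}\|^2$. This gives
$$(c_1'c_1'')^2\,\|\mathbf{w}\|^2\le \|H\mathbf{w}\|^2\le (c_2'c_2'')^2\,\|\mathbf{w}\|^2,$$
so the composed triple is a frame triple with bounds $c_1'c_1''$ and $c_2'c_2''$ (and in the Hadamard case $c_1'=c_2'=c_1''=c_2''=1$ this recovers unitarity). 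The strict lower bound $c_1'c_1''>0$ forces $H$ to be injective on $\mathbb{C}^{L'\times L''}$, whence the integers $\mathbf{l}$ are pairwise distinct modulo $N'N''$ and $\#\mathbf{L}_{n,m}=\#\mathbf{B}_{n,m}=\prod_{k=n}^m\#B_k$; together with $0\in\mathbf{B}_{n,m}\cap\mathbf{L}_{n,m}$ this confirms the composed object is a bona fide frame triple satisfying the running Assumption.

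Granting the two–factor step, I would finish by induction on $m-n$, peeling off the first factor each time via $\mathbf{B}_{n,m}=N_{n+1}\cdots N_m\,B_n+\mathbf{B}_{n+1,m}$ and $\mathbf{L}_{n,m}=L_n+N_n\mathbf{L}_{n+1,m}$; the block $(N_n\cdots N_m,\mathbf{B}_{n,m},\mathbf{L}_{n,m})$ then has bounds $\prod_{k=n}^m(1-\epsilon_k)$ and $\prod_{k=n}^m(1+\epsilon_k)$. To see that a grouping of the tower into consecutive blocks is again a frame triple \emph{tower}, I would set $\widetilde\epsilon=\max\{1-\prod(1-\epsilon_k),\,\prod(1+\epsilon_k)-1\}$ per block and check summability: with $\sigma$ the block sum and $S=\sum_k\epsilon_k$, one has $1-\prod(1-\epsilon_k)\le\sigma$ and $\prod(1+\epsilon_k)-1\le e^{\sigma}-1\le \sigma e^{S}$, so summing over blocks gives $\sum\widetilde\epsilon\le S(1+e^{S})<\infty$. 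The one place demanding care—the main obstacle—is precisely the coupling term $b''l'/(N'N'')$: it is what defeats a plain tensor factorization, and the entire argument rests on exploiting its unimodularity so that the frame bounds still multiply cleanly.
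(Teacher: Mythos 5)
Your argument is correct, and it is essentially the standard proof of this fact: the paper itself only cites \cite[Proposition 3.1]{LW2017} for this lemma, and the proof there proceeds by exactly the same iterated composition — peel off one factor via $\mathbf{b}=N''b'+b''$, $\mathbf{l}=l'+N'l''$, discard the integer cross term, and use the unimodularity of the residual coupling phase $e^{-2\pi i b''l'/(N'N'')}$ so that the frame bounds of the two factors multiply. Your additional observations (invariance of the matrix under $\widetilde{\mathbf{L}_{n,m}}\equiv \mathbf{L}_{n,m} \bmod N_n\cdots N_m$, distinctness of the combined $\mathbf{L}$ modulo $N_n\cdots N_m$, and summability of the regrouped $\tilde\epsilon$'s) are all accurate and complete the verification of the tower condition.
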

 
Given a subsequence of positive integers $\{n_k\}$, we define ${\bf N}_{n_1} = N_1\cdots N_{n_1}$, ${\bf N}_{n_k} = N_{n_{k-1}+1}\cdots N_{n_{k}}$, for $k = 2,3,\cdots$. Then the Cantor-Moran measure $\mu = \mu(N_n,B_n)$ can be factorized along this subsequence as 
\begin{equation}\label{eq_factorize}
    \mu = \delta_{\frac{1}{{\bf N}_{n_1}}{\bf B}_{1,n_1}}\ast\delta_{\frac{1}{{\bf N}_{n_1}{\bf N}_{n_2}}{\bf B}_{n_1+1,n_2}}\ast\delta_{\frac{1}{{\bf N}_{n_1}{\bf N}_{n_2}{\bf N}_{n_3}}{\bf B}_{n_2+1,n_3}}\ast\cdots
\end{equation}
Define also 
$$
\mu_{>{\bf N}_{n_k}} = \delta_{\frac{1}{{\bf N}_{n_1}{\bf N}_{n_2}...{\bf N}_{n_{k+1}}}{\bf B}_{n_k+1,n_{k+1}}}\ast\cdots
$$
to be the tail term of $\mu$ by removing the first $k$ factors. Theorem \ref{theorem_LW} can be read as follows:

 \begin{theorem}\label{theorem_LW_factorized}
Let $\{(N_n,B_n,L_n)\}$ be a frame triple tower (or respectively a Hadamard triple tower). Let 
$$
\Lambda_k = {\bf L}_{1,n_1}+{\bf N}_{n_1}{\bf L}_{n_1+1,n_2}+....+{\bf N}_{n_1}...{\bf N}_{n_{k-1}}{\bf L}_{n_{k-1}+1,n_k}, \ \mbox{and} \ \Lambda = \bigcup_{k=1}^{\infty}\Lambda_k.
$$
Suppose that 
$$
\delta(\Lambda): = \inf_{n\ge 1}\inf_{\lambda_k\in\Lambda_k} |\widehat{\mu_{>{\bf N}_{n_k}}}(\lambda_k)|^2 >0.
$$
Then the Cantor-Moran measure $\mu$ is a frame-spectral (or spectral) measure with a frame-spectrum 
(or spectrum) $\Lambda$.
\end{theorem}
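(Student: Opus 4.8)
The plan is to deduce Theorem~\ref{theorem_LW_factorized} directly from the un-factorized Theorem~\ref{theorem_LW} by reading the Cantor-Moran measure $\mu$ through the factorization (\ref{eq_factorize}) along the prescribed subsequence $\{n_k\}$. The point is that (\ref{eq_factorize}) exhibits $\mu$ as the Cantor-Moran measure generated by the \emph{factorized} system $\{({\bf N}_{n_k},{\bf B}_{n_{k-1}+1,n_k},\widetilde{{\bf L}_{n_{k-1}+1,n_k}})\}$ (with the first block being $({\bf N}_{n_1},{\bf B}_{1,n_1},{\bf L}_{1,n_1})$), and that the tail term $\mu_{>{\bf N}_{n_k}}$ defined before the statement is exactly the remainder of this factorized measure after deleting its first $k$ factors. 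Consequently, once the factorized system is known to be a frame triple tower with summable errors, Theorem~\ref{theorem_LW} applies to it and produces precisely the sets $\Lambda_k$, the quantity $\delta(\Lambda)$, and the conclusion appearing in the statement.

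First I would apply the factorization Lemma to confirm that each block $({\bf N}_{n_k},{\bf B}_{n_{k-1}+1,n_k},\widetilde{{\bf L}_{n_{k-1}+1,n_k}})$ is again a frame triple, so that the factorized system is a frame triple tower. In the Hadamard case each block is again a Hadamard triple and no further bookkeeping is required. In the frame case I would track the multiplicativity of the bounds coming from the Kronecker structure of $H$: if $(N_n,B_n,L_n)$ has bounds $1\pm\epsilon_n$, the $k$-th block inherits bounds $\prod_{n=n_{k-1}+1}^{n_k}(1\pm\epsilon_n)$, which I write as $1\pm\widetilde{\epsilon}_k$. Since
$$
1+\widetilde{\epsilon}_k=\prod_{n=n_{k-1}+1}^{n_k}(1+\epsilon_n)\le \exp\left(\sum_{n=n_{k-1}+1}^{n_k}\epsilon_n\right),
$$
we get $\widetilde{\epsilon}_k\le \exp(\sigma_k)-1$ with $\sigma_k=\sum_{n=n_{k-1}+1}^{n_k}\epsilon_n$, and summing over $k$ gives $\sum_k\widetilde{\epsilon}_k<\infty$ because $\sum_k\sigma_k=\sum_n\epsilon_n<\infty$. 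Thus the factorized system is a genuine frame triple tower, and moreover $\prod_k(1\pm\widetilde{\epsilon}_k)=\prod_n(1\pm\epsilon_n)$.

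Next I would match the notation. Feeding the factorized tower into Theorem~\ref{theorem_LW}---with its $k$-th ``$N$'' equal to ${\bf N}_{n_k}$ and its $k$-th ``$L$'' equal to $\widetilde{{\bf L}_{n_{k-1}+1,n_k}}$---produces exactly the sets $\Lambda_k$ and $\Lambda=\bigcup_k\Lambda_k$ written in the statement, while the hypothesis $\delta(\Lambda)>0$ is literally the separation hypothesis of Theorem~\ref{theorem_LW} applied to the factorized data. Hence that theorem yields that $E(\Lambda)$ is a Fourier frame for $\mu$ (an orthonormal basis in the Hadamard case) with frame bounds $\prod_k(1\pm\widetilde{\epsilon}_k)=\prod_n(1\pm\epsilon_n)$, which is the assertion.

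The only genuine point requiring care---and the main, albeit mild, obstacle---is the bound bookkeeping in the frame case, namely verifying that the factorized blocks inherit the product bounds $\prod(1\pm\epsilon_n)$ and that the resulting errors $\widetilde{\epsilon}_k$ remain summable, as above. Everything else is purely a matter of identifying $\mu_{>{\bf N}_{n_k}}$ with the tail of the factorized measure and $\Lambda_k$ with the output of Theorem~\ref{theorem_LW}; in particular, in the Hadamard triple tower case the argument collapses to a one-line application of Theorem~\ref{theorem_LW} to the factorized tower furnished by the factorization Lemma.
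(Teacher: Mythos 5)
Your proposal is correct and is essentially the paper's own argument: the paper presents Theorem \ref{theorem_LW_factorized} as Theorem \ref{theorem_LW} ``read through'' the factorization \eqref{eq_factorize}, relying on the factorization lemma (quoted from \cite[Proposition 3.1]{LW2017}) to guarantee that the blocked system $\{({\bf N}_{n_k},{\bf B}_{n_{k-1}+1,n_k},\widetilde{{\bf L}_{n_{k-1}+1,n_k}})\}$ is again a frame triple tower. Your explicit bookkeeping of the inherited bounds $1\pm\widetilde{\epsilon}_k$ and their summability just spells out what that lemma delegates to the reference, so nothing further is needed.
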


 \medskip

\section{Equi-positivity and admissibility}
 Previous section asserts us that a spectrum can be constructed if we can establish $\delta(\Lambda)>0$. In most previous papers, for $\#B_n$ small, authors  check directly the canonical mutually orthogonal sets satisfying  $\delta(\Lambda)>0$. For other cases, $\delta(\Lambda)>0$ is constructed by some random constructions with the assumption on some strong separation conditions (see e.g. \cite{AHL2015}).  The rest of the paper will be devoted to understanding the condition $\delta(\Lambda)>0$ by introducing two conditions that can guarantee $\delta(\Lambda)>0$ can be constructed. These conditions eventually can be studied through classical harmonic analysis theory on the circle group ${\mathbb T}$.

 \subsection{Equi-positivity} The first condition is formulated as below. It was first used for self-affine measure in \cite{DHL2017}.
 
\begin{definition}
Let $\Phi$ be a collection of probability measures on   compact set $[0, 1]$. We say that $\Phi$ is {\it an equi-positive family} if there exists $\epsilon_0>0$ such that for all   $ x\in [0,1]$ and for all $\nu\in \Phi$, there exists $k_{x,\nu}\in{\mathbb Z}$ such that
$$
|\widehat{\nu}(x+k_{x,\nu})|\ge \epsilon_0.
$$
\end{definition}

\medskip

Equi-positivity is also equivalent to saying that we can find a fundamental domain $K_{\nu}$ of ${\mathbb Z}$ so that $|\widehat{\nu}|$ is always away from 0 at least  an $\epsilon_0>0$ on $K_{\nu}$, where $\epsilon_0$ is independent of $\nu$. 
\medskip

\begin{theorem}\label{theorem_admissible_spectral}
Let $\mu(N_n,B_n)$ be a Cantor-Moran measure with  $\{(N_n,B_n,L_n)\}$ forming a frame triple tower (or respectively a Hadamard triple tower). Suppose that there exists a subsequence $\{n_k\}$ such that $\{\nu_{>n_{k}}\}$ is equi-positive. 
Then $\mu(N_n,B_n)$ is a frame-spectral (or spectral) measure with a frame-spectrum (or spectrum) in ${\mathbb Z}$.
\end{theorem}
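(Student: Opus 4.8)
The plan is to reduce everything to the condition $\delta(\Lambda)>0$ of Theorem~\ref{theorem_LW_factorized}, applied to the factorization of $\mu$ along the subsequence $\{n_k\}$ for which $\{\nu_{>n_k}\}$ is equi-positive. The bridge between the two theorems is the scaling identity
\[
\widehat{\mu_{>n_k}}(\xi)=\widehat{\nu_{>n_k}}\!\left(\frac{\xi}{N_1\cdots N_{n_k}}\right),
\]
which is immediate from $\nu_{>n_k}(E)=\mu_{>n_k}\big((N_1\cdots N_{n_k})^{-1}E\big)$. Thus the quantity $|\widehat{\mu_{>n_k}}(\lambda_k)|$ appearing in $\delta(\Lambda)$ is exactly $|\widehat{\nu_{>n_k}}|$ evaluated at $\lambda_k/(N_1\cdots N_{n_k})$, which is where the equi-positive hypothesis on $\{\nu_{>n_k}\}$ can be inserted.

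Next I would build $\Lambda$ inductively as a tree, using the freedom to replace each block spectrum $\mathbf{L}_{n_{k-1}+1,n_k}$ by a congruent set modulo $\mathbf{N}_{n_k}=N_{n_{k-1}+1}\cdots N_{n_k}$ (permitted by the factorization/modification lemma of Section~2, since the frame-triple property of $H$ depends only on residues mod $N_n$). The decisive computation is that if a frequency is extended from $\lambda_{k-1}\in\Lambda_{k-1}$ by a digit $\ell$ modified to $\ell+t\mathbf{N}_{n_k}$, i.e.
\[
\lambda_k=\lambda_{k-1}+N_1\cdots N_{n_{k-1}}\big(\ell+t\,\mathbf{N}_{n_k}\big),\qquad t\in\mathbb{Z},
\]
then, using $N_1\cdots N_{n_{k-1}}\cdot\mathbf{N}_{n_k}=N_1\cdots N_{n_k}$,
\[
\frac{\lambda_k}{N_1\cdots N_{n_k}}=\frac{\lambda_{k-1}+N_1\cdots N_{n_{k-1}}\ell}{N_1\cdots N_{n_k}}+t .
\]
Hence modifying the digit within its residue class shifts the argument of $\widehat{\nu_{>n_k}}$ by an arbitrary integer $t$, which is precisely the integer translation supplied by equi-positivity.

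With this in hand I would carry out the construction. Let $\epsilon_0>0$ be the equi-positivity constant of $\{\nu_{>n_k}\}$. At stage $k$, for each already-chosen $\lambda_{k-1}\in\Lambda_{k-1}$ and each $\ell\in\mathbf{L}_{n_{k-1}+1,n_k}$, set $x:=\{(\lambda_{k-1}+N_1\cdots N_{n_{k-1}}\ell)/(N_1\cdots N_{n_k})\}\in[0,1)$ and use equi-positivity to pick $t=t_{\lambda_{k-1},\ell}\in\mathbb{Z}$ with $|\widehat{\nu_{>n_k}}(x+t)|\ge\epsilon_0$; the corresponding $\lambda_k$ as above is then an integer satisfying $|\widehat{\mu_{>n_k}}(\lambda_k)|\ge\epsilon_0$. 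Collecting all such $\lambda_k$ gives $\Lambda_k$, and $\Lambda=\bigcup_k\Lambda_k\subset\mathbb{Z}$. Because every modification only adds multiples of $\mathbf{N}_{n_k}$, the residues mod $N_n$ are untouched, so the $\{(N_n,B_n,L_n)\}$ remain a frame (resp. Hadamard) triple tower and the elements stay in distinct residue classes; thus $E(\Lambda)$ is still a frame (resp. orthonormal) system. By construction $\delta(\Lambda)=\inf_k\inf_{\lambda_k\in\Lambda_k}|\widehat{\mu_{>n_k}}(\lambda_k)|^2\ge\epsilon_0^2>0$, and Theorem~\ref{theorem_LW_factorized} yields that $\mu(N_n,B_n)$ is frame-spectral (resp. spectral) with (frame-)spectrum in $\mathbb{Z}$.

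The main obstacle I anticipate is reconciling the \emph{per-node} choice of the shift $t_{\lambda_{k-1},\ell}$ with the block-sumset form in which Theorem~\ref{theorem_LW_factorized} is phrased: equi-positivity forces the integer translation to depend on $x$, hence on the parent $\lambda_{k-1}$, so a single modified set $\widetilde{\mathbf{L}}_{n_{k-1}+1,n_k}$ will not in general work uniformly over all parents. I would resolve this by verifying that the completeness criterion underlying Theorem~\ref{theorem_LW_factorized} in fact only needs the tree of frequencies (with each node's descendants living in the correct residue classes) together with the uniform lower bound $\delta(\Lambda)>0$, rather than a literal common sumset; the orthogonality/frame estimates are insensitive to the $t$'s since they see only residues mod $N_n$. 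Checking this compatibility carefully, and confirming that distinctness of the $\lambda_k$ is preserved across branches, is the delicate point.
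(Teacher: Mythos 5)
Your reduction to $\delta(\Lambda)>0$, the scaling identity $\widehat{\mu_{>n_k}}(\xi)=\widehat{\nu_{>n_k}}(\xi/(N_1\cdots N_{n_k}))$, and the observation that shifting a digit by $t\,\mathbf{N}_{n_k}$ translates the argument of $\widehat{\nu_{>n_k}}$ by the integer $t$ are all exactly right, and they match the skeleton of the paper's argument. The problem is the one you yourself flag at the end: because you choose the shift $t_{\lambda_{k-1},\ell}$ at the point $x=\{(\lambda_{k-1}+N_1\cdots N_{n_{k-1}}\ell)/(N_1\cdots N_{n_k})\}$, the shift depends on the parent $\lambda_{k-1}$, so $\Lambda_k$ is a tree rather than $\Lambda_{k-1}+N_1\cdots N_{n_{k-1}}\widetilde{\mathbf{L}}$ for a single set $\widetilde{\mathbf{L}}\equiv\mathbf{L}\ (\mathrm{mod}\ \mathbf{N}_{n_k})$, and Theorem \ref{theorem_LW_factorized} as stated no longer applies. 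Your proposed repair --- that the completeness criterion ``in fact only needs the tree of frequencies'' --- is plausible but is precisely the nontrivial step, and you do not prove it; as written the argument is not closed. This is a genuine gap, not a cosmetic one.

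The paper closes exactly this gap by a different device that you are missing: it first upgrades equi-positivity using the equicontinuity of $\widehat{{\mathcal P}}([0,1])$ (Lemma \ref{equicontinuity}) to get a uniform $\delta_0>0$ such that $|\widehat{\nu_{>n_k}}(y+x+k_{x,\nu_{>n_k}})|\ge\epsilon_0$ for all $|y|<\delta_0$, and then exploits the freedom to merge consecutive factors so that $\mathbf{N}_{n_k}$ is large enough that $|\lambda_{k-1}/(\mathbf{N}_{n_1}\cdots\mathbf{N}_{n_k})|<\delta_0$ uniformly over the finite set $\Lambda_{k-1}$. The integer shift is then chosen only at the point $x_{\ell_k}=\ell_k/\mathbf{N}_{n_k}$, independently of the parent, and the parent's contribution is absorbed as the perturbation $y$. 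This restores the literal sumset structure $\Lambda_k=\Lambda_{k-1}+\mathbf{N}_{n_1}\cdots\mathbf{N}_{n_{k-1}}\widetilde{\mathbf{L}}_k$ with $\widetilde{\mathbf{L}}_k\equiv\mathbf{L}_{n_{k-1}+1,n_k}\ (\mathrm{mod}\ \mathbf{N}_{n_k})$, so the black-box theorem applies verbatim. To fix your write-up, either adopt this equicontinuity-plus-telescoping step, or actually prove the tree version of Theorem \ref{theorem_LW_factorized} (re-running the step-function frame estimate of Section 2 node by node); the former is shorter and is what the paper does.
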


\begin{proof}
We first factorize $\mu(N_n,B_n)$ along with the subsequence  given in the assumption, so that it has the form 
\begin{equation}\label{eq_factorized2}
    \mu = \delta_{\frac{1}{{\bf N}_{n_1}}{\bf B}_{1,n_1}}\ast\delta_{\frac{1}{{\bf N}_{n_1}{\bf N}_{n_2}}{\bf B}_{n_1+1,n_2}}\ast\delta_{\frac{1}{{\bf N}_{n_1}{\bf N}_{n_2}{\bf N}_{n_3}}{\bf B}_{n_2+1,n_3}}\ast\cdots
\end{equation}
where ${\bf N}_{n_1} = N_1...N_{n_1-1}$, ${\bf N}_{n_k} = N_{n_{k-1}}...N_{n_{k}-1}$, for $k = 2,3,\cdots$. Moreover, we also know that each $({\bf N}_{n_k}, {\bf B}_{n_{k-1}, n_{k}-1}, {\bf L}_{n_{k-1}, n_{k}-1)}$ forms a frame triple. We can further factorize consecutive factors in (\ref{eq_factorized2})  of $\mu$ if necessary, so that we can choose that ${\bf N}_{n_k}$ as large as we want. With respect to the factorization, we let 
$$
\mu_{>{\bf N}_{n_k}} = \delta_{\frac{1}{{\bf N}_{n_1}{\bf N}_{n_2}...{\bf N}_{n_{k+1}}}{\bf B}_{n_k+1,n_{k+1}}}\ast\cdots
$$
be the tail term of $\mu$ by removing the first $k$ factors and let 
$$
\nu_{>{{\bf N}_{n_k}}} (E) = \mu_{>{\bf N}_{n_k}} \left(\frac{E}{{\bf N}_{n_1}...{\bf N}_{n_k}}\right), \ \forall  \ E  \ \mbox{Borel}
$$
be the pull back measure of $\mu_{>{\bf N}_{n_k}}$ onto $[0,1]$. We can now undo the factorization and we notice that the measure 
\begin{equation}\label{equndo}
\nu_{>{{\bf N}_{n_k}}} = \nu_{>n_k}.
\end{equation}
Our goal is to construct inductively a sequence of sets $\Lambda_k$ from ${\bf L}_{n_{k-1}, n_{k}-1}$ such that 
$$
\delta(\Lambda) = \inf_{k\ge 1}\inf_{\lambda\in \Lambda_k} |\widehat{\mu_{>{\bf N}_{n_k}}}(\lambda_{k})|^2 >0.
$$
\medskip

 It follows from Lemma \ref{equicontinuity} that $\{\widehat{\nu_{>n}}\}$ is an equicontinuous family. The equi-positivity of $\{\nu_{>n_k}\}$ and the equicontinuity of $\{\widehat{\nu_{>n_k}}\}$ imply that there exists $\epsilon_0>0$ and $\delta_0>0$ such that for all $x\in [0,1]$ and for all $\nu_{>n_k}$, there exists $k_{x,\nu_{>{n_k}}}\in{\mathbb Z}$ such that
\begin{equation}\label{eq_equi-positive-continuous}
|\widehat{\nu_{>n_k}}(y+x+k_{x,\nu_{>n_k}})|\ge \epsilon_0    
\end{equation}
whenever $|y|<\delta_0$. Also, if $x =0$, we can take $k_{x,\nu_{>n_k}} = 0$ since $\widehat{\nu_{>n_k}}( 0)=1$. 

\medskip

We now  construct inductively $\Lambda_k$ so that $\delta(\Lambda)>0$. First, we take $\Lambda_0  = \{0\}$. Suppose that $\Lambda_{k-1}$ has been constructed and it satisfies
$$
\inf_{\lambda_{k-1}\in\Lambda_{k-1}} |\widehat{\mu_{>{\bf N}_{n_{k-1}}}}(\lambda_{k-1})|^2\ge \epsilon_0^2>0,
$$
where $\epsilon_0$ is given in (\ref{eq_equi-positive-continuous}). We can  take a large enough $n_{k}$ in the subsequence and factorize more levels of the Dirac measures so that we obtain a large enough ${ \mathbf N}_{n_k}$ with the following happen: 
\begin{equation}\label{eq_small_delta}
\sup_{\lambda_{k-1}\in\Lambda_{k-1}}\left|\frac{1}{{\bf N}_{n_1}{\bf N}_{n_2}...{\bf N}_{n_k}}\lambda_{k-1}\right| < \delta_0.
\end{equation}
We now define 
$$
\Lambda_k = \Lambda_{k-1}+\{ {\bf N}_{n_1}{\bf N}_{n_2}...{\bf N}_{n_{k-1}} \ell_k+ {\bf N}_{n_1}{\bf N}_{n_2}...{\bf N}_{n_k} k_{x_{\ell_k},\nu_{>n_{k}}}: \ell_k\in{\bf L}_{n_{k-1+1}, n_k}\},
$$
where $x_{\ell_k} = \ell_{k}/{\bf N}_{n_k}\in[0,1]$ so that $k_{x_{\ell_k},\nu_{>n_k}}$ is defined as in  (\ref{eq_equi-positive-continuous}). Now, writing 
$$
\lambda_k = \lambda_{k-1}+{\bf N}_{n_1}{\bf N}_{n_2}...{\bf N}_{n_{k-1}} \ell_k+ {\bf N}_{n_1}{\bf N}_{n_2}...{\bf N}_{n_k} k_{x_{\ell_k},\nu_{>n_{k}}}, 
$$
for some $\lambda_{k-1}\in \Lambda_{k-1}$, we have  
$$
\begin{aligned}
|\widehat{\mu_{>{\bf N}_{n_{k}}}}(\lambda_{k})|^2 =& \left|\widehat{\nu_{>{\bf N}_{n_{k}}}}\left(\frac{\lambda_{k}}{{\bf N}_{n_1}{\bf N}_{n_2}...{\bf N}_{n_k}}\right)\right|^2\\
=&\left|\widehat{\nu_{>n_{k}}}\left(\frac{1}{{\bf N}_{n_1}{\bf N}_{n_2}...{\bf N}_{n_k}}\lambda_{k-1}+x_{\ell_k}+ k_{x_{\ell_k},\nu_{>n_k}}\right)\right|^2 \ (\mbox{using} \ (\ref{equndo}))\\
\ge& \epsilon_0^2>0
\end{aligned}
$$
by  (\ref{eq_equi-positive-continuous}) and \eqref{eq_small_delta}. Hence, we have $\delta(\Lambda)\ge \epsilon_0^2>0$ for $\Lambda = \bigcup_{k=1}^{\infty}\Lambda_k$ and our proof is complete.  
\end{proof}

\medskip

\subsection{Admissible Family.} Theorem \ref{theorem_admissible_spectral} tells us that the existence of an equi-positive subsequence $\{\nu_{>n_k}\}$ is enough to show that the Cantor-Moran measure $\mu(N_n, B_n)$ we consider is spectral. Next, we will focus on proving such equi-positive sequence exists by introducing admissible family. We first need the following definition.

\medskip

\begin{definition}\label{definition_periodic-set}
For any Borel probability measure $\mu$ on ${\mathbb R}^d$, the {\it integral periodic zero set} is defined to be the set
$$
{\mathcal Z}(\mu) = \{\xi\in{\mathbb R}: \widehat{\mu}(\xi+k) = 0, \ \forall\ k\in{\mathbb Z}\}.
$$
We say that a family of measures $\Phi\subset{\mathcal P}([0,1])$ is an {\it admissible family} if for all $\nu\in\Phi$, ${\mathcal Z}(\nu)= \emptyset$ and for any possible weak limits of $\Phi$, their integral periodic zero sets are also emptysets. We say that a sequence of measures $\{\nu_n\}$ is an {\it admissible sequence} if $\{\nu_n\}$ converges weakly  and it forms an admissible family. 
\end{definition}
\medskip

\begin{proposition}\label{prop1}
Let $\Phi$ be an admissible family of measures in ${\mathcal P}([0, 1])$ and let $x\in{\mathbb R}$. Then there exists $\epsilon_x>0$ such that for all $\nu\in\Phi$
$$
\sup \{|\widehat{\nu} (x+k)|: k\in{\mathbb Z}\} >\epsilon_x.
$$
\end{proposition}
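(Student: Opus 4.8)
The plan is to argue by contradiction, combining the weak compactness of $\mathcal{P}([0,1])$ with the admissibility hypothesis. Fix $x\in\mathbb{R}$ and suppose the conclusion fails: then for every $\epsilon>0$ there is a measure in $\Phi$ whose supremum is at most $\epsilon$, so one can select a sequence $\{\nu_m\}\subset\Phi$ with
$$
\sup_{k\in\mathbb{Z}}|\widehat{\nu_m}(x+k)|\longrightarrow 0 \qquad \text{as } m\to\infty.
$$

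First I would invoke the weak compactness theorem recorded just after Lemma \ref{lem_weak_equivalence}: since every $\nu_m$ is a probability measure supported on the compact set $[0,1]$, the sequence $\{\nu_m\}$ admits a subsequence $\{\nu_{m_j}\}$ converging weakly to some probability measure $\nu_0\in\mathcal{P}([0,1])$. Because $\nu_0$ is a weak limit of measures drawn from $\Phi$, the admissibility of $\Phi$ (Definition \ref{definition_periodic-set}) forces $\mathcal{Z}(\nu_0)=\emptyset$.

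Next I would extract a single non-vanishing translate. Since $x\notin\mathcal{Z}(\nu_0)=\emptyset$, the very definition of the integral periodic zero set yields some $k_0\in\mathbb{Z}$ with $\widehat{\nu_0}(x+k_0)\neq 0$; set $c=|\widehat{\nu_0}(x+k_0)|>0$. The function $t\mapsto e^{-2\pi i(x+k_0)t}$ is bounded and continuous, so weak convergence (equivalently, Lemma \ref{lem_weak_equivalence}(v)) gives $\widehat{\nu_{m_j}}(x+k_0)\to\widehat{\nu_0}(x+k_0)$ and hence $|\widehat{\nu_{m_j}}(x+k_0)|\to c$. Consequently
$$
\sup_{k\in\mathbb{Z}}|\widehat{\nu_{m_j}}(x+k)|\ge |\widehat{\nu_{m_j}}(x+k_0)|\longrightarrow c>0,
$$
contradicting the defining property of $\{\nu_m\}$. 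This contradiction establishes the uniform lower bound $\epsilon_x$.

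The only delicate point I anticipate is conceptual rather than computational: one must ensure that the extracted limit $\nu_0$ is genuinely covered by the admissibility hypothesis, i.e. that a weak limit of a weakly convergent subsequence drawn from $\Phi$ is indeed among the ``possible weak limits of $\Phi$'' referred to in Definition \ref{definition_periodic-set}, and that the relevant mode of convergence of the Fourier transforms is exactly the pointwise (indeed uniform-on-compacts) convergence guaranteed by weak convergence. Both follow immediately from the definitions and Lemma \ref{lem_weak_equivalence}, so no separate estimate is needed.
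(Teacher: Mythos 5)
Your proof is correct and follows essentially the same route as the paper: contradiction via weak compactness of $\mathcal{P}([0,1])$, admissibility forcing $\mathcal{Z}(\nu_0)=\emptyset$ for the weak limit, and pointwise convergence of the Fourier transforms. The only cosmetic difference is that you locate a single $k_0$ with $\widehat{\nu_0}(x+k_0)\neq 0$ and contradict the vanishing suprema, while the paper passes the bound $|\widehat{\nu_\epsilon}(x+k)|\le\epsilon$ to the limit to conclude $\mathcal{Z}(\nu_0)\neq\emptyset$; these are the same argument read in opposite directions.
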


\begin{proof}
Suppose that the conclusion is false. Then for any $\epsilon>0$, there exists $\nu_{\epsilon}\in\Phi$ such that
$$
\sup \{|\widehat{\nu_{\epsilon}} (x+k)|: k\in{\mathbb Z}\} \le \epsilon.
$$
This means that $|\widehat{\nu_{\epsilon}}(x+k)|\le\epsilon$ for all $k\in{\mathbb Z}$. Note that by passing subsequence if necessary, $\{\nu_{\epsilon}\}$ converges weakly to some probability measure $\nu_{0}$. By the admissibility assumption of $\Phi$, ${\mathcal Z}(\nu_{0}) = \emptyset$. However, we have $|\widehat{\nu_{\epsilon}} (x+k)|\le\epsilon$ for all $k\in{\mathbb Z}$. 
As $\{\widehat{\nu_{\epsilon}}\}$ converges pointwisely,  we  have that $\widehat{\nu_0}(x+k) = 0$ for all $k\in{\mathbb Z}$ and thus ${\mathcal Z}(\nu_0)$ is non-empty. This is a contradiction. Therefore, our conclusion holds.
\end{proof}

The following is the key theorem that we will use to construct our spectrum for an admissible family.

\begin{theorem} \label{Key_proposition}
Let $\Phi$ be an admissible family of measures in ${\mathcal P}([0, 1])$. Then $\Phi$ is equi-positive.
\end{theorem}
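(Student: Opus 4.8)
The plan is to argue by contradiction and upgrade the pointwise-in-$x$ estimate of Proposition \ref{prop1} to a uniform one, using the compactness of $[0,1]$ together with the weak compactness of $\mathcal{P}([0,1])$. Suppose $\Phi$ were not equi-positive. Negating the definition, for each $m\in\mathbb{N}$ there would exist $x_m\in[0,1]$ and $\nu_m\in\Phi$ with
\[
\sup_{k\in\mathbb{Z}}|\widehat{\nu_m}(x_m+k)|\le \frac1m .
\]
First I would pass to a subsequence so that $x_m\to x_0$ for some $x_0\in[0,1]$ (compactness of $[0,1]$), and then to a further subsequence so that $\nu_m\to\nu_0$ weakly for some $\nu_0\in\mathcal{P}([0,1])$ (the weak compactness theorem recorded after Lemma \ref{lem_weak_equivalence}). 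By construction $\nu_0$ is a weak limit of members of $\Phi$, so the admissibility hypothesis guarantees $\mathcal{Z}(\nu_0)=\emptyset$.

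The heart of the argument is to show that, although the evaluation point $x_m+k$ moves with $m$, one still has $\widehat{\nu_m}(x_m+k)\to\widehat{\nu_0}(x_0+k)$ for each fixed $k\in\mathbb{Z}$. I would split
\[
|\widehat{\nu_m}(x_m+k)-\widehat{\nu_0}(x_0+k)|\le |\widehat{\nu_m}(x_m+k)-\widehat{\nu_0}(x_m+k)|+|\widehat{\nu_0}(x_m+k)-\widehat{\nu_0}(x_0+k)|.
\]
The first term tends to $0$ because weak convergence forces $\widehat{\nu_m}\to\widehat{\nu_0}$ uniformly on compact sets (Lemma \ref{lem_weak_equivalence}, condition (v)), and the points $x_m+k$ all lie in the fixed compact interval $[k,k+1]$; the second term tends to $0$ by continuity of $\widehat{\nu_0}$ (equivalently by the equicontinuity of $\widehat{\mathcal{P}}([0,1])$ from Lemma \ref{equicontinuity}), since $x_m+k\to x_0+k$.

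Finally I would combine the two facts. Since $|\widehat{\nu_m}(x_m+k)|\le \sup_{j\in\mathbb{Z}}|\widehat{\nu_m}(x_m+j)|\le 1/m\to 0$, the limit forces $\widehat{\nu_0}(x_0+k)=0$; as $k\in\mathbb{Z}$ was arbitrary this holds for every $k$, i.e. $x_0\in\mathcal{Z}(\nu_0)$ and hence $\mathcal{Z}(\nu_0)\ne\emptyset$, contradicting admissibility. This produces the uniform constant $\epsilon_0>0$ required by the definition of equi-positivity. The main obstacle is precisely the double limit in the middle step: one must justify pushing the moving evaluation points $x_m+k$ through the weakly convergent sequence $\widehat{\nu_m}$, which is exactly where the equicontinuity (uniform continuity of the Fourier transforms, independent of the measure) and the uniform-on-compacts convergence are both needed simultaneously; the remainder is bookkeeping with the two nested subsequence extractions and the observation that the limiting measure lands in the admissibility hypothesis as a weak limit.
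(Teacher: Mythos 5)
Your argument is correct, and it reaches the conclusion by a somewhat different organization than the paper. The paper proceeds in two stages: Proposition \ref{prop1} fixes $x$ and extracts, by contradiction, a pointwise-in-$x$ constant $\epsilon_x$ valid uniformly over $\nu\in\Phi$ (this is where weak compactness and the admissibility of weak limits are used); Theorem \ref{Key_proposition} then upgrades $\epsilon_x$ to a single $\epsilon_0$ by combining the equicontinuity of $\widehat{\mathcal{P}}([0,1])$ with a finite subcover of $[0,1]$ by balls $B(x_j,\delta_{x_j}/2)$, reusing the integers $k_{x_j,\nu}$ at nearby points. You instead negate equi-positivity globally, obtain a single sequence $(x_m,\nu_m)$, and run one sequential-compactness argument in both variables at once, which forces you to justify the passage $\widehat{\nu_m}(x_m+k)\to\widehat{\nu_0}(x_0+k)$ with a moving evaluation point; your triangle-inequality split, using uniform convergence of $\widehat{\nu_m}$ on the compact interval $[k,k+1]$ together with continuity of $\widehat{\nu_0}$ (or equicontinuity of the family), handles this correctly, and the admissibility hypothesis does apply to $\nu_0$ since it is by construction a weak limit of members of $\Phi$. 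The ingredients are identical in both proofs; your version is slightly more economical in that it bypasses the covering argument and does not actually need the intermediate statement of Proposition \ref{prop1}, while the paper's two-step version isolates the pointwise estimate as a reusable lemma and makes explicit that the integer $k_{x,\nu}$ can be chosen locally constant in $x$, a fact that is convenient later (e.g.\ in the construction of the spectrum in Theorem \ref{theorem_admissible_spectral}, where $k_{x,\nu}$ is perturbed by a small $y$).
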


\begin{proof}
 We need to show that there exists $\epsilon_0>0$  such that for all $x\in [0,1]$ and for all $\nu\in\Phi$, there exists $k_{x,\nu}\in{\mathbb Z}$ such that
$$
|\widehat{\nu}(x+k_{x,\nu})|\ge \epsilon_0.
$$

For any $x\in [0,1]$, we take $\epsilon_x$ as in Proposition \ref{prop1}. Then for any $\nu\in \Phi$, we can find $k_{x,\nu}$ such that
$$
|\widehat{\nu}(x+k_{x,\nu})|\ge \epsilon_x.
$$
By Lemma \ref{equicontinuity}, $\Phi$ is  equicontinous on ${\mathbb R}$. we can find  $\delta_x$ such that for all $|y|\le \delta_x$, we have
$$
|\widehat{\nu}(x+y+k_{x,\nu})|\ge \frac{\epsilon_x}{2},  \quad \forall\ \nu\in \Phi.
$$
As $[0, 1] \subset \bigcup_{x\in X} B(x,\delta_x/2)$, by the compactness of $[0, 1]$, we can find $x_1,...,x_N\in [0, 1]$ such that $[0, 1] \subset B(x_1,\delta_{x_1}/2)\cup...\cup B(x_N,\delta_{x_N}/2)$. We now take
$$
\delta_0 = \min\left\{\frac{\delta_{x_j}}{2}: j=1,...,N\right\},  \
\epsilon_0 = \min\left\{\frac{\epsilon_{x_j}}{2}: j=1,...,N\right\}.
$$
Now, $\delta_0$ and $\epsilon_0$ are positive and independent of $x\in [0, 1]$ and $\nu\in \Phi$. We claim that the stated property holds. Indeed, for any $x\in [0, 1]$, $x\in B(x_j,\delta_{x_j}/2)$ for some $j=1,...,N$. 
Hence,
$$
|\widehat{\nu}(x+k_{x_j,\nu})| = |\widehat{\nu}(x_j+(x-x_j)+k_{x_j,\nu})|\ge \frac{\epsilon_{x_j}}{2}\ge \epsilon_0.
$$
Therefore, we just redefine $k_{x,\nu} = k_{x_j,\nu}$ to obtain our desired conclusion. 
\end{proof}

In particular, we have the following theorem. It follows from Theorem \ref{Key_proposition} and Theorem \ref{theorem_admissible_spectral}. It assumes a stronger condition than equi-positivity, but it will be useful for our later analysis.

\begin{theorem}\label{thadmissible}
Let $\mu(N_n ,B_n)$ be a Cantor-Moran measure with  $\{(N_n,B_n,L_n)\}$ forming a frame triple tower (respectively a Hadamard triple tower). Suppose that there exists a subsequence $\{n_k\}$ such that $\{\nu_{>n_{k}}\}$ is an admissible sequence. Then $\mu(N_n,B_n)$ is a frame-spectral (respectively specrtral) measure.  
\end{theorem}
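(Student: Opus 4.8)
The plan is to chain together the two immediately preceding theorems, since the hypothesis of \ref{thadmissible} is exactly the input of the first and its conclusion is exactly the input of the second. First I would check the trivial typing condition: because each $B_n\subset\{0,1,\dots,N_n-1\}$, the tail $\mu_{>n_k}$ is supported on $[0,({\bf N}_{n_1}\cdots{\bf N}_{n_k})^{-1}]$, so its pull-back $\nu_{>n_k}$ onto $[0,1]$ is again a probability measure supported on $[0,1]$. Hence the family $\Phi=\{\nu_{>n_k}:k\ge 1\}$ satisfies $\Phi\subset{\mathcal P}([0,1])$, and by hypothesis $\{\nu_{>n_k}\}$ is an admissible sequence, which by Definition \ref{definition_periodic-set} means in particular that $\Phi$ is an admissible family.

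Next I would apply Theorem \ref{Key_proposition} to this $\Phi$. Since $\Phi$ is an admissible family in ${\mathcal P}([0,1])$, that theorem yields at once that $\Phi=\{\nu_{>n_k}\}$ is equi-positive. All of the genuine content of the statement lives here and has already been discharged: Proposition \ref{prop1} converts the emptiness of the integral periodic zero sets of the members of $\Phi$ and of all its weak limits into a pointwise positive lower bound $\epsilon_x$ (using weak compactness of ${\mathcal P}([0,1])$ and pointwise convergence of the Fourier transforms), and Theorem \ref{Key_proposition} then upgrades this to a \emph{uniform} constant $\epsilon_0$ by combining the equicontinuity of $\widehat{{\mathcal P}}([0,1])$ from Lemma \ref{equicontinuity} with a finite-subcover argument over the compact interval $[0,1]$.

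Finally, with the equi-positive subsequence $\{\nu_{>n_k}\}$ in hand, I would invoke Theorem \ref{theorem_admissible_spectral} verbatim: the existence of a subsequence along which the pull-backed tails are equi-positive forces $\mu(N_n,B_n)$ to be frame-spectral when $\{(N_n,B_n,L_n)\}$ is a frame triple tower, and spectral when it is a Hadamard triple tower, with spectrum inside ${\mathbb Z}$. I do not expect any real obstacle, as the result is a formal corollary; the only care needed is bookkeeping, namely confirming that the ``admissible sequence'' hypothesis matches the ``admissible family'' input of Theorem \ref{Key_proposition} and that the resulting equi-positivity matches the input of Theorem \ref{theorem_admissible_spectral}, respecting the frame/Hadamard dichotomy throughout.
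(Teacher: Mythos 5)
Your proposal is correct and matches the paper exactly: the authors prove Theorem \ref{thadmissible} precisely by combining Theorem \ref{Key_proposition} (an admissible family in ${\mathcal P}([0,1])$ is equi-positive) with Theorem \ref{theorem_admissible_spectral} (an equi-positive subsequence $\{\nu_{>n_k}\}$ yields a frame-spectrum, respectively a spectrum, in $\mathbb{Z}$). The bookkeeping you flag — that an admissible sequence is in particular an admissible family, and that the frame/Hadamard dichotomy is carried through — is all that is needed.
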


\medskip

\section{Admissible family of Cantor-Moran measures}

\subsection{General admissible family.} In this section, we will study the admissibiliy condition of  $\mu \in {\mathcal P}[0,1]$. We will identify the circle group ${\mathbb T}$ as $[0,1)$. If a measure $\nu\in{\mathcal M}([0,1])$ has the property that $\nu\{0\}=0$ or $\nu\{1\} = 0$. Then $\nu$ can be regarded as a measure on ${\mathbb T}$ by an obvious identification. 

\medskip

We now give a complete characterization for which kind of measures in ${\mathcal P}[0,1]$ so that ${\mathcal Z}(\mu) = \emptyset$. We also recall a well-known fact in classical harmonic analysis (see e.g. \cite[p.35]{K2004}).

\begin{lemma}\label{lemma_uniqueness}(uniqueness of Fourier coefficients)
Let $\nu\in{\mathcal M}({\mathbb T})$. Suppose that $\nu(k) = 0$ for all $k\in{\mathbb Z}$. Then $\nu = 0$. 
\end{lemma}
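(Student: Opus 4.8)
The plan is to deduce the result from the density of trigonometric polynomials in $C({\mathbb T})$ together with the uniqueness half of the Riesz representation theorem. Read correctly, the hypothesis asserts that every Fourier--Stieltjes coefficient $\widehat{\nu}(k) = \int_{\mathbb T} e^{-2\pi i k x}\,d\nu(x)$ vanishes for all $k\in{\mathbb Z}$. By linearity of the integral, this immediately gives $\int_{\mathbb T} P\,d\nu = 0$ for every trigonometric polynomial $P(x) = \sum_{|k|\le n} c_k e^{2\pi i k x}$.

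Next I would upgrade this from trigonometric polynomials to arbitrary continuous functions. The key input is the Stone--Weierstrass theorem: the trigonometric polynomials form a subalgebra of $C({\mathbb T})$ that contains the constants, separates points, and is closed under complex conjugation, hence is uniformly dense in $C({\mathbb T})$. Because $\nu$ is a finite complex measure, the functional $f\mapsto \int f\,d\nu$ is bounded on $\bigl(C({\mathbb T}),\,\|\cdot\|_\infty\bigr)$, with operator norm at most the total variation $\|\nu\|$. Therefore, given any $f\in C({\mathbb T})$ and trigonometric polynomials $P_j\to f$ uniformly, I can pass to the limit to get $\int f\,d\nu = \lim_{j} \int P_j\,d\nu = 0$.

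The final step, which I expect to be the only genuinely substantive point, is to conclude $\nu = 0$ from the fact that $\nu$ annihilates all of $C({\mathbb T})$. This is exactly the uniqueness statement in the Riesz representation theorem for $C({\mathbb T})^{*}$: a regular complex Borel measure on a compact metric space is completely determined by its action on continuous functions, so the zero functional corresponds to the zero measure. Regularity is automatic here since ${\mathbb T}$ is a compact metric space. Alternatively, one could bypass Riesz by testing against continuous approximations of indicators of arcs, thereby showing that $\nu$ assigns total variation zero to every arc and then extending to the full Borel $\sigma$-algebra; but the cleanest route is simply to invoke the uniqueness in the Riesz representation. The main obstacle is conceptual rather than computational: all the content lies in the principle that a measure is recovered from its integrals against continuous test functions, so once density and boundedness are in place the conclusion is immediate.
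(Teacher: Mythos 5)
Your argument is correct and is exactly the standard proof: the paper itself gives no proof of this lemma, citing it as a well-known fact from Katznelson's book, and the argument there is precisely the one you give (vanishing on trigonometric polynomials, Stone--Weierstrass density in $C({\mathbb T})$, boundedness of $f\mapsto\int f\,d\nu$ by the total variation, and uniqueness in the Riesz representation theorem). No gaps; you also correctly read the hypothesis $\nu(k)=0$ as $\widehat{\nu}(k)=0$.
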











\medskip

\begin{proposition}\label{prop_zero}
Let $\mu\in {\mathcal P}([0,1])$ and suppose that $\mu(\{0\})=0$ or $\mu(\{1\}) = 0$. Then ${\mathcal Z}(\mu) = \emptyset$.
\end{proposition}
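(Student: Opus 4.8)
The plan is to argue by contradiction. Suppose $\xi \in \mathcal{Z}(\mu)$, so that $\widehat{\mu}(\xi + k) = 0$ for every $k \in \mathbb{Z}$. The idea is to absorb the modulation $e^{-2\pi i \xi x}$ into the measure: define the complex measure $d\nu(x) = e^{-2\pi i \xi x}\, d\mu(x)$ on $[0,1]$. A direct computation gives, for each $k \in \mathbb{Z}$,
$$
\widehat{\nu}(k) = \int e^{-2\pi i k x}\, d\nu(x) = \int e^{-2\pi i (\xi + k) x}\, d\mu(x) = \widehat{\mu}(\xi + k) = 0,
$$
so all Fourier coefficients of $\nu$ vanish. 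The goal is then to invoke the uniqueness of Fourier coefficients (Lemma \ref{lemma_uniqueness}) to deduce $\nu = 0$, which forces $\mu = 0$ and contradicts $\mu \in \mathcal{P}([0,1])$.

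The step that requires care is the passage from $[0,1]$ to the circle $\mathbb{T}$, since Lemma \ref{lemma_uniqueness} applies to measures on $\mathbb{T}$, and this is precisely where the hypothesis enters. Since $\mu$ is a positive measure and $|e^{-2\pi i \xi x}| = 1$, the total variation satisfies $|\nu| = \mu$; in particular $\nu(\{0\}) = \mu(\{0\})$ and $|\nu(\{1\})| = \mu(\{1\})$. Thus the assumption $\mu(\{0\}) = 0$ or $\mu(\{1\}) = 0$ yields $\nu(\{0\}) = 0$ or $\nu(\{1\}) = 0$, which is exactly the condition (noted just before the proposition) allowing $\nu$ to be regarded as an element of $\mathcal{M}(\mathbb{T})$ under the identification $\mathbb{T} \cong [0,1)$. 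As $e^{-2\pi i k x}$ is invariant under $x \mapsto x \bmod 1$, the Fourier coefficients are unchanged by this identification, so $\widehat{\nu}(k) = 0$ for all $k$ when $\nu$ is viewed on $\mathbb{T}$ as well. Lemma \ref{lemma_uniqueness} then gives $\nu = 0$, hence $\mu = |\nu| = 0$, the desired contradiction.

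I expect the only genuine subtlety to be this endpoint bookkeeping, and it is worth emphasizing that the hypothesis cannot be dropped: for $\rho = \frac{1}{2}(\delta_0 + \delta_1)$, which charges both endpoints, one has $\widehat{\rho}(\frac{1}{2} + k) = \frac{1}{2}(1 + e^{-2\pi i(1/2 + k)}) = 0$ for every $k$, so $\frac{1}{2} \in \mathcal{Z}(\rho)$. This reflects the fact that when both $0$ and $1$ carry mass, folding them together on $\mathbb{T}$ can create cancellation, so the reduction to Lemma \ref{lemma_uniqueness} genuinely needs one endpoint to be null.
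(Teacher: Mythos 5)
Your proof is correct and follows essentially the same route as the paper's: define $d\nu(x)=e^{-2\pi i\xi x}\,d\mu(x)$, observe that the hypothesis lets $\nu$ be regarded as a measure on ${\mathbb T}$ with all Fourier coefficients vanishing, and conclude via the uniqueness lemma. Your additional remarks on $|\nu|=\mu$ and the role of the endpoints are accurate but do not change the argument.
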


\begin{proof}
Suppose that there exists $\xi\in{\mathcal Z}(\mu)$.  Define a measure $\nu$ by $d\nu(x)=e^{-2\pi i\xi x}d\mu(x)$, which is nonzero since $\mu$ is a Borel probability measure and $\nu$ is absolutely continuous with respect to $\mu$ with a non-zero density. Now,  $\nu(\{0\})=0$ or  $\nu(\{1\})=0$ by the assumption and thus $\nu$ can be regarded as a measure on ${\mathbb T}$. Moreover, its Fourier coefficient
$$\widehat{\nu}(k)=\widehat{\mu}(\xi+k)=0$$
for all $k\in\Z$. By Lemma \ref{lemma_uniqueness}, $\nu$ is a zero measure, which is a contradiction.
\end{proof}

%


\medskip

\begin{remark}\label{remark}The assumption that $\mu(\{0\}) = 0$ or $\mu(\{1\}) = 0$ cannot be removed from the proposition. For example, we consider $\rho = \frac{1}{2}(\delta_0+\delta_1)$. 
Then
$
\widehat{\rho}(\xi) = \frac{1+e^{-2\pi i \xi }}{2},
$
and $\widehat{\rho}(1/2+k)=0$ for all $k\in{\mathbb Z}$. Therefore, $1/2\in{\mathcal Z}(\rho)$.
\end{remark}

\medskip

The following theorem shows however that $\rho$ is the only possible exception.\begin{theorem}
\label{prop_Dirac_measure}
Let $\mu\in {\mathcal P}([0,1])$. Then ${\mathcal Z}(\mu)\neq\emptyset$ if and only if $\mu = \rho=\frac{1}{2}(\delta_0+\delta_1)$.
\end{theorem}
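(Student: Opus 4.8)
The plan is to prove the two implications separately, with essentially all the content in the forward direction. The reverse implication ($\mu=\rho\Rightarrow\mathcal{Z}(\mu)\neq\emptyset$) is exactly the computation already recorded in Remark \ref{remark}, which exhibits $1/2\in\mathcal{Z}(\rho)$; so nothing further is needed there.

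For the forward direction, suppose $\mathcal{Z}(\mu)\neq\emptyset$ and fix $\xi_0$ with $\widehat{\mu}(\xi_0+k)=0$ for every $k\in\mathbb{Z}$. First I would invoke Proposition \ref{prop_zero}: since $\mathcal{Z}(\mu)\neq\emptyset$, neither endpoint atom can vanish, so $a:=\mu(\{0\})>0$ and $b:=\mu(\{1\})>0$. I then split $\mu=a\delta_0+b\delta_1+\mu_0$, where $\mu_0=\mu|_{(0,1)}$ is the restriction of $\mu$ to the open interval, so that $\mu_0(\{0\})=\mu_0(\{1\})=0$.

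The main step is a twist-and-collapse argument reducing everything to the uniqueness Lemma \ref{lemma_uniqueness}. I would define the complex measure $d\sigma(x)=e^{-2\pi i\xi_0 x}\,d\mu(x)$ on $[0,1]$, so that $\widehat{\sigma}(k)=\widehat{\mu}(\xi_0+k)=0$ for all $k\in\mathbb{Z}$. The key observation is that at integer frequencies the two endpoints $0$ and $1$ are indistinguishable, since $e^{-2\pi i k\cdot 0}=e^{-2\pi i k\cdot 1}=1$. Hence if I let $\tilde{\sigma}\in\mathcal{M}(\mathbb{T})$ be the measure obtained from $\sigma$ by identifying $0$ and $1$ (that is, $\tilde\sigma$ agrees with $\sigma$ on $(0,1)$ and carries mass $a+be^{-2\pi i\xi_0}$ at the identified point), then $\widehat{\tilde\sigma}(k)=\widehat{\sigma}(k)=0$ for all $k$. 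By Lemma \ref{lemma_uniqueness}, $\tilde\sigma=0$. Reading off the two pieces of $\tilde\sigma$ separately, this forces simultaneously $\sigma|_{(0,1)}=0$, hence $\mu_0=0$ (the twisting factor has modulus one, so $|\sigma|=\mu$), and $a+be^{-2\pi i\xi_0}=0$.

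It remains to extract $\mu=\rho$ from these two facts. From $\mu_0=0$ I get $\mu=a\delta_0+b\delta_1$, and the relation $a+be^{-2\pi i\xi_0}=0$ with $a,b>0$ forces, upon taking moduli, $a=b$ and therefore $e^{-2\pi i\xi_0}=-1$; combined with the normalization $a+b=1$ this yields $a=b=1/2$, i.e. $\mu=\frac12(\delta_0+\delta_1)=\rho$ (and incidentally $\xi_0\in\frac12+\mathbb{Z}$, consistent with Remark \ref{remark}). I expect the one point needing genuine care to be the collapsing step: one must check that passing $\sigma$ to the quotient circle really preserves all integer Fourier coefficients even though $\sigma$ lives on the closed interval, which is exactly what the coincidence $e^{-2\pi i k\cdot 0}=e^{-2\pi i k\cdot 1}$ guarantees. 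Everything after the application of Lemma \ref{lemma_uniqueness} is elementary.
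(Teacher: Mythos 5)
Your proof is correct and follows essentially the same route as the paper's: both decompose $\mu$ into its endpoint atoms plus an interior part, twist by $e^{-2\pi i\xi_0 x}$, and reduce everything to the uniqueness Lemma \ref{lemma_uniqueness} on ${\mathbb T}$, then finish with the modulus argument $|a|=|b|$. The only difference is organizational: the paper first extracts the constant $c=\widehat{\mu_1}(\xi_0+k)$ and applies the uniqueness lemma to $e^{-2\pi i\xi_0 x}\,d(\mu_1-c\delta_0)$, whereas you push the entire twisted measure forward under the quotient map $[0,1]\to{\mathbb T}$ (legitimate, as you note, because every integer character takes the same value at $0$ and at $1$), obtaining $\mu_1=0$ and $a+be^{-2\pi i\xi_0}=0$ in a single application of the lemma --- a mild streamlining of the same method, not a different one.
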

\begin{proof} From Remark \ref{remark}, we just need to prove the necessity. From Proposition \ref{prop_zero}, if ${\mathcal Z}(\mu)\neq\emptyset$, then $\mu(\{0\})>0$ and $\mu(\{1\})>0$. So we may write
$$\mu=p_0\delta_0+p_1\delta_1+\mu_1$$
where $\mu_1$ is a finite Borel measure on $[0, 1]$ and $\mu_1(\{0, 1\})=0$. Then
$$\widehat{\mu}(\xi)=p_0+p_1e^{-2\pi i \xi}+\widehat{\mu_1}(\xi).$$
Let $\xi_0$ be an element in ${\mathcal Z}(\mu)$, then
$$\widehat{\mu}(\xi_0+k)=p_0+p_1e^{-2\pi i \xi_0}+\widehat{\mu}_1(\xi_0+k)=0, \quad \forall\ k\in \Z.$$
It implies that
$$\widehat{\mu}_1(\xi_0+k)=-p_0-p_1e^{-2\pi i \xi_0} := c.$$
Consider the complex measure on [0, 1]
$$d\nu(x)=e^{-2\pi i \xi_0 x}d(\mu_1-c\delta_0)(x).$$
Since $(\mu_1-c\delta_0)(\{1\})=0$,  we have $\nu(\{1\})=0$. Also,  $\widehat{\nu}(k)=\widehat{\mu_1}(\xi_0+k)-c=0$ for all $k\in\Z$. By regarding   $\nu$ as a measure on ${\mathbb T}$, from Lemma \ref{lemma_uniqueness}, $\nu=0$ which implies that $\mu_1 = c\delta_0$. However, $\mu_1$ has no measure at the point $0$. Thus, $c = 0$ and $\mu_1 = 0$.  This shows that $c = p_0+p_1e^{-2\pi i\xi_0}=0$. The equation $p_0+p_1e^{-2\pi i\xi_0}=0$ is equivalent to $e^{-2\pi i\xi_0}=-\frac{p_0}{p_1}$. The left hand has modulus 1, so $p_0=p_1=\frac12$ and thus $\mu = \rho$ follows. 
\end{proof}

\medskip

\subsection{Admissible family of Cantor-Moran measures}

Let $\{N_n\}$ be a sequence of non-negative integers with $N_n\ge 2$. Let $B_n\subset\{0,1,..,N_{n}-1\}$ such that $\#B_n \le N_n$. We form the associated Cantor-Moran measure by
$$
\begin{aligned}
\mu = \mu(N_n,B_n) =& \delta_{\frac{1}{N_1}B_1}\ast\delta_{\frac{1}{N_1N_2}B_2}\ast\delta_{\frac{1}{N_1N_2N_3}B_2}\ast...\\
=& \mu_n\ast\mu_{>n}.\\
\end{aligned}
$$

\medskip

   As Cantor-Moran measure is purely singular without atoms,  $\mu (\{1\} )= 0$ and  by  Proposition \ref{prop_zero}, ${\mathcal Z}(\mu) = \emptyset$. The following question, if true, would be enough to show that  the above Cantor-Moran measure, if  it can form a frame triple, is a frame-spectral measure.

\medskip

{\bf Question:}  Let $\mu(N_n,B_n)$ be a Cantor-Moran measure,  can we find a subsequence $\{n_k\}$ such that  $\Phi : = \{\nu_{>{n_k}}: k=1,2,...\}$ forms an admissible sequence?

\medskip

Proposition \ref{prop_Dirac_measure} offers a simple solution to the above question.

\begin{lemma}\label{Lemma_rho}
$\{\nu_{>n}\}$ forms an admissible family if and only if $\{\nu_{>n}\}$ does not converge to $\rho$ weakly. 
\end{lemma}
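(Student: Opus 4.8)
The plan is to obtain both implications directly from Theorem~\ref{prop_Dirac_measure}, which isolates $\rho$ as the unique measure in $\mathcal P([0,1])$ with nonempty integral periodic zero set, once the membership clause in the definition of an admissible family has been disposed of. The only inputs beyond that theorem are the non-atomicity of the tails and the weak compactness of $\mathcal P([0,1])$.

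First I would observe that each pull-back tail is itself a Cantor--Moran measure, $\nu_{>n}=\delta_{\frac1{N_{n+1}}B_{n+1}}\ast\delta_{\frac1{N_{n+1}N_{n+2}}B_{n+2}}\ast\cdots$, hence purely singular and non-atomic, so in particular $\nu_{>n}(\{1\})=0$. By Proposition~\ref{prop_zero} this yields $\mathcal Z(\nu_{>n})=\emptyset$ for every $n$, and the requirement ``$\mathcal Z(\nu)=\emptyset$ for all $\nu\in\Phi$'' is therefore automatic for $\Phi=\{\nu_{>n}\}$. Admissibility of $\{\nu_{>n}\}$ is then equivalent to the single statement that every weak limit $\nu_0$ of the family has $\mathcal Z(\nu_0)=\emptyset$; by Theorem~\ref{prop_Dirac_measure} this says exactly that no weak limit of $\{\nu_{>n}\}$ equals $\rho$.

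It remains to compare ``no weak limit equals $\rho$'' with the stated condition that $\{\nu_{>n}\}$ does not converge weakly to $\rho$, and this comparison is where I expect the main obstacle to lie. One direction is free: if $\{\nu_{>n}\}$ converges weakly to $\rho$ then $\rho$ is a weak limit, and since $1/2\in\mathcal Z(\rho)$ (Remark~\ref{remark}) the family is not admissible. For the other direction I would use weak compactness to extract, from any non-admissible situation, a subsequence converging to a measure with nonempty periodic zero set, which by Theorem~\ref{prop_Dirac_measure} must be $\rho$; thus $\rho$ is a subsequential weak limit. The delicate point is that admissibility quantifies over \emph{all} subsequential limits, so the clean equivalence the argument produces is ``admissible $\iff$ $\rho$ is not a weak cluster point of $\{\nu_{>n}\}$'', and upgrading this to a statement about the full sequence would require ruling out a second cluster point distinct from $\rho$. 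Here I would bring in the convolution identity $\nu_{>n}=\delta_{\frac1{N_{n+1}}B_{n+1}}\ast\widetilde\nu_{n+1}$, where $\widetilde\nu_{n+1}(E)=\nu_{>n+1}(N_{n+1}E)$ is supported in $[0,1/N_{n+1}]$, together with the Portmanteau description of weak convergence in Lemma~\ref{lem_weak_equivalence}, analysing the mass that $\nu_{>n}$ assigns to $[0,\delta]$, to $[1-\delta,1]$, and to the interior $(\delta,1-\delta)$ in terms of the distribution of $B_{n+1}/N_{n+1}$. This mass-profile bookkeeping, rather than the periodic-zero-set algebra of the first two steps, is the part that needs genuine care, since it is exactly the passage from a single cluster point at $\rho$ to convergence of the whole sequence.
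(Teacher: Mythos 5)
Your setup coincides with the paper's: the tails $\nu_{>n}$ are non-atomic, so Proposition~\ref{prop_zero} gives ${\mathcal Z}(\nu_{>n})=\emptyset$ for every $n$, and by Theorem~\ref{prop_Dirac_measure} admissibility of the family reduces to the assertion that no weak limit point of $\{\nu_{>n}\}$ equals $\rho$; the implication ``converges weakly to $\rho$ $\Rightarrow$ not admissible'' then follows from Remark~\ref{remark}. Up to this point you are reproducing the paper's argument.

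The problem is that you stop exactly where the content of the lemma begins. What remains to be shown is that non-admissibility --- which, after your reduction, only guarantees that \emph{some} subsequence of $\{\nu_{>n}\}$ converges weakly to $\rho$ --- forces the \emph{entire} sequence to converge weakly to $\rho$, i.e.\ that $\rho$ cannot coexist with a second weak cluster point. You identify this correctly, but then merely announce a plan (the convolution identity for $\nu_{>n}$, Portmanteau via Lemma~\ref{lem_weak_equivalence}, and mass bookkeeping on $[0,\delta]$, $(\delta,1-\delta)$, $[1-\delta,1]$) without executing any of it, so the proposal proves ``admissible $\iff$ $\rho$ is not a weak cluster point'' but not the stated equivalence. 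For comparison, the paper closes this step with no mass analysis at all: it uses weak compactness of ${\mathcal P}([0,1])$ to extract from every subsequence a further weakly convergent subsequence, asserts via Theorem~\ref{prop_Dirac_measure} that the only possible limit of any such subsequence is $\rho$, and concludes by the standard subsequence criterion that the full sequence converges weakly to $\rho$. Your instinct that the uniqueness of the cluster point is the delicate issue is sound --- non-admissibility by itself produces only one subsequential limit with nonempty integral periodic zero set, and the paper dispatches the remaining uniqueness claim in a single sentence --- but flagging the difficulty and sketching an unexecuted alternative does not discharge it; as submitted, the forward direction of the lemma is unproved.
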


\begin{proof}
It would be easier to prove $\{\nu_{>n}\}$ does not form an admissible family if and only if $\{\nu_{>n}\}$ converges to $\rho$ weakly. 
Suppose that $\{\nu_{>n}\}$ does not form an admissible family. By the weak compactness of ${\mathcal M}[0,1]$. Any subsequence $\{\nu_{>n_k}\}$ has a weakly convergent subsequence. Now, this convergent subsequence must converge weakly to $\rho$ since $\{\nu_{>n}\}$ is not admissible but ${\mathcal Z}(\nu_{>n}) = \emptyset$ for all $n$, Theorem \ref{prop_Dirac_measure} tells us that  the only weak limit can be $\rho$. We have shown that any subsequence has a subsequence converging to $\rho$. It implies that $\{\nu_{>n}\}$ converges weakly to $\rho$.

\medskip

Conversely, if $\{\nu_{>n}\}$ converges to $\rho$ weakly, then  $\rho$ is the only weak limit of $\{\nu_{>n}\}$. Hence,   $\{\nu_{>n}\}$ cannot be admissible.
\end{proof}

\medskip

As we will see in the following two propositions, for a large class of Cantor-Moran measures, we can always find an admissible subsequence. For $\nu\in{\mathcal M}[0,1]$, we let $[0,c_{\nu}]$ be the convex hull  of the support of $\nu$. 

\medskip

\begin{proposition}\label{prop4.3}
 Let $\mu(N_j,B_j)$ be a Cantor-Moran measure. Suppose that  $c: = \sup_n c_{\nu_{>n}}<1$. 
 Then $\Phi : = \{\nu_{>n}: n=1,2,...\}$ forms an admissible sequence.
\end{proposition}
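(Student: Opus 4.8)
The plan is to verify directly the two defining conditions of an admissible family in Definition \ref{definition_periodic-set}, exploiting that the hypothesis $c < 1$ keeps every member of $\Phi$, and every weak limit of $\Phi$, away from the point $1$, which is the only obstruction to applying Proposition \ref{prop_zero}.

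First I would record that, by the definition of $c_{\nu_{>n}}$ as the right endpoint of the convex hull of $\mathrm{supp}(\nu_{>n})$, each $\nu_{>n}$ is supported in $[0, c_{\nu_{>n}}] \subseteq [0, c]$. Since $c < 1$, this forces $\nu_{>n}(\{1\}) = 0$, and hence Proposition \ref{prop_zero} gives $\mathcal{Z}(\nu_{>n}) = \emptyset$ for every $n$. This settles the first condition.

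For the second condition, let $\nu_0$ be an arbitrary weak limit of $\Phi$, say $\nu_{>n_k} \to \nu_0$. The crucial point is that the support bound survives in the limit: applying the portmanteau inequality of Lemma \ref{lem_weak_equivalence}(2) to the open set $O = (c, \infty)$ and using $\nu_{>n_k}(O) = 0$ for all $k$, I obtain $\nu_0(O) \le \liminf_k \nu_{>n_k}(O) = 0$. Thus $\mathrm{supp}(\nu_0) \subseteq [0, c]$, so in particular $\nu_0(\{1\}) = 0$, and Proposition \ref{prop_zero} once more yields $\mathcal{Z}(\nu_0) = \emptyset$. Both conditions now hold, so $\Phi$ is an admissible family; equivalently, since the same estimate shows no weak limit of $\Phi$ can equal $\rho = \frac12(\delta_0 + \delta_1)$ (as $\rho(\{1\}) = \frac12$), the family does not converge weakly to $\rho$ and Lemma \ref{Lemma_rho} applies. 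The stated admissible sequence then follows after passing, if necessary, to a weakly convergent subsequence via weak compactness, which remains admissible.

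The step demanding the most care is the transfer of the support constraint to weak limits, but this is a single portmanteau estimate; the remainder is just two invocations of Proposition \ref{prop_zero}. Conceptually there is no real obstacle, since the hypothesis $c<1$ is exactly what prevents mass from accumulating at $1$, and by Theorem \ref{prop_Dirac_measure} the measure $\rho$ is the unique element of $\mathcal{P}([0,1])$ with nonempty integral periodic zero set.
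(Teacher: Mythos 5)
Your proposal is correct and follows essentially the same route as the paper: both verify $\mathcal{Z}(\nu_{>n})=\emptyset$ via Proposition \ref{prop_zero} and then transfer the support bound to any weak limit through the portmanteau inequality of Lemma \ref{lem_weak_equivalence} to conclude the limit also has no mass at $1$. The only cosmetic difference is that the paper deduces $\mathcal{Z}(\nu_{>n})=\emptyset$ from the non-atomicity of the Cantor-Moran tails rather than from the support bound, but both pass through the same proposition.
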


\begin{proof}
As each of the $\nu_{>n}$ is a Cantor-Moran measure generated by $\{(N_{n+j},B_{n+j})\}$, it is a singular measure without any atoms. In particular, ${\mathcal Z}(\nu_{>n}) = \emptyset$. Note that the support of $\nu_{>n}$ is contained in the interval $[0,c]$ by our assumption. Hence, $\nu_{>n}[1-c,1+\epsilon] = 0$ for all $n$ and $\epsilon>0$. If $\nu$ is a weak limit of $\{\nu_{>n}\}$, then Lemma \ref{lem_weak_equivalence} implies that 
$$
\nu(1-c,1+\epsilon) \le \liminf_{n\rightarrow\infty} \nu_{>n}(1-c,1+\epsilon) = 0.
$$
Hence, $\nu(\{1\}) = 0$. It now follows that ${\mathcal Z}(\nu) = \emptyset$ by Proposition \ref{prop_zero}. This shows that the family is admissible.
\end{proof}

\medskip

\begin{proposition}\label{prop4.6}
Suppose that $M: = \sup N_j<\infty$. Then there exists a subsequence $\{n_k\}$ such that $\{\nu_{>n_k}\}$ is an admissible sequence. 
\end{proposition}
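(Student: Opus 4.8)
The plan is to exploit the boundedness hypothesis $M=\sup_j N_j<\infty$ to reduce the problem to a \emph{finite symbolic alphabet}, and then extract a subsequence along which the generating data stabilize coordinatewise, so that the weak limit of $\{\nu_{>n_k}\}$ is again a genuine Cantor-Moran measure — and such a measure is automatically atomless, hence different from $\rho$. First I would record that $\nu_{>n}$ is precisely the Cantor-Moran measure generated by the shifted data $(N_{n+1},B_{n+1}),(N_{n+2},B_{n+2}),\dots$, so that
$$
\widehat{\nu_{>n}}(\xi)=\prod_{j=1}^{\infty}M_{B_{n+j}}\!\left(\frac{\xi}{N_{n+1}\cdots N_{n+j}}\right),\qquad M_B(\eta)=\frac{1}{\#B}\sum_{b\in B}e^{-2\pi i b\eta}.
$$
Since $2\le N_j\le M$ and $B_j\subset\{0,1,\dots,N_j-1\}$ with $0\in B_j$, every pair $(N_j,B_j)$ belongs to the finite set $\mathcal A=\{(N,B):2\le N\le M,\ B\subset\{0,\dots,N-1\},\ 0\in B\}$. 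By Lemma \ref{Lemma_rho} and Theorem \ref{prop_Dirac_measure}, it suffices to produce a subsequence $\{n_k\}$ such that $\{\nu_{>n_k}\}$ converges weakly to a limit that is not $\rho$.

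Next I would run a diagonal extraction. Because each coordinate of the shifted sequence ranges over the finite set $\mathcal A$, the product space $\mathcal A^{\mathbb N}$ is sequentially compact, so I can choose a subsequence $\{n_k\}$ for which, for every fixed $j\ge 1$, the pair $(N_{n_k+j},B_{n_k+j})$ is eventually equal to a fixed $(N_j^\ast,B_j^\ast)\in\mathcal A$. Let $\nu^\ast$ be the Cantor-Moran measure generated by $\{(N_j^\ast,B_j^\ast)\}$. I would then verify $\nu_{>n_k}\to\nu^\ast$ weakly by checking pointwise convergence of the Fourier transforms and invoking Lemma \ref{lem_weak_equivalence}(v): fixing $\xi$ and a truncation level $J$, the first $J$ factors of $\widehat{\nu_{>n_k}}(\xi)$ stabilize to those of $\widehat{\nu^\ast}(\xi)$ for large $k$, while the tails $\prod_{j>J}$ stay uniformly within $O(|\xi|\,2^{-J})$ of $1$, because $|M_B(\eta)-1|\le C_M|\eta|$ uniformly over the finite alphabet $\mathcal A$ and $N_{n_k+1}\cdots N_{n_k+j}\ge 2^{j}$; letting $J\to\infty$ yields $\widehat{\nu_{>n_k}}(\xi)\to\widehat{\nu^\ast}(\xi)$.

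Finally I would conclude. As $\nu^\ast$ is itself a Cantor-Moran measure of the form \eqref{eq_CM}, it is singular without atoms, so $\nu^\ast(\{1\})=0$ and Proposition \ref{prop_zero} gives $\mathcal Z(\nu^\ast)=\emptyset$; equivalently $\nu^\ast\neq\rho$. Since $\{\nu_{>n_k}\}$ converges weakly to the single limit $\nu^\ast$, the only weak limit of this family is $\nu^\ast$, and each member satisfies $\mathcal Z(\nu_{>n_k})=\emptyset$ as well; hence, by Definition \ref{definition_periodic-set}, $\{\nu_{>n_k}\}$ is an admissible sequence. The main obstacle is the middle step — showing that coordinatewise stabilization of the symbolic data forces weak convergence to the \emph{expected} Cantor-Moran limit — and this is exactly where the bound $N_j\le M$ is essential: it supplies, via the uniform geometric decay $N_{n_k+1}\cdots N_{n_k+j}\ge 2^{j}$ and the uniform Lipschitz control of $M_B$ over the finite alphabet, the tail estimate needed to rule out mass escaping to an atom at $1$ (which is precisely the degeneration to $\rho$ that we must avoid).
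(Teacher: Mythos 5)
Your argument is correct, but it follows a genuinely different route from the paper's. The paper never identifies the weak limit: it splits into two cases according to whether $N_n-1\in B_n$ for infinitely many $n$. If not, it bounds the convex hull of $\mathrm{supp}(\nu_{>n})$ by $[0,\tfrac{M-2}{M-1}]$ and invokes Proposition \ref{prop4.3}; if so, it pigeonholes a subsequence with $N_{n_k}=N$ constant and shows directly that $\nu_{>n_k-1}$ puts mass at least $1/M^2$ on the fixed interval $I=[1-\tfrac1N,\,1-\tfrac1N+\tfrac1{2N}]$, which is bounded away from $\{0,1\}$, so no weak limit can be $\rho$ and Theorem \ref{prop_Dirac_measure} finishes. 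You instead use $\sup N_j\le M$ to view the shifted data as living in a finite alphabet, extract a coordinatewise-stabilizing subsequence by compactness of $\mathcal A^{\mathbb N}$, and prove via the factor-by-factor Fourier estimate that $\nu_{>n_k}$ converges weakly to the Cantor--Moran measure $\nu^\ast$ built from the limiting data, which then cannot be $\rho$. The paper's argument is shorter and avoids both the diagonal extraction and the tail estimate, at the cost of a case split and of saying nothing about what the limit is; yours identifies the limit explicitly and makes transparent that boundedness of $N_j$ is exactly what prevents mass from escaping to the endpoint. One small point of care in your last step: if the limiting digit sets degenerate to $B_j^\ast=\{0\}$ for all large $j$, then $\nu^\ast$ does have atoms, so ``Cantor--Moran, hence atomless'' is not literally available; but since the standing assumption $0\in B_n$ passes to the limit and $N_j^\ast\ge 2$, one still gets $\nu^\ast(\{1\})=0$ directly (the point $1$ requires every digit to equal $N_j^\ast-1$), so Proposition \ref{prop_zero} applies and the conclusion stands.
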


\begin{proof}
We first assume that there are only finitely many $n$ such that  $N_{n}-1\in B_n$. We can find an $n_0$ such that $N_n-1\not\in B_n$ for all $n\ge n_0$ .  Then for all $n\ge n_0$, 
$$
\begin{aligned}
c_{\nu_{>n}} \le& \sum_{j=1}^{\infty} \frac{N_{n+j}-2}{N_{n+1}....N_{n+j}} \\
\le & 1-\frac{1}{M}-\frac{1}{M^2}-... = \frac{M-2}{M-1}<1.
\end{aligned}
$$
Hence, by Proposition \ref{prop4.3}, $\{\nu_{>n}: n\ge n_0\}$ is an admissible sequence. 

\medskip

We now suppose that there are infinitely many $n$ such that $N_n-1\in B_n$. Take the subsequence $\{n_k\}$ such that $N_{n_k} = N\le M$. Consider $I = \left[1-\frac1N, 1-\frac1N+\frac{1}{2N}\right]$. Then $I$ contains at least an interval $\left[1-\frac1N, 1-\frac1N+\frac{1}{N N_{n_k+1}}\right]$ and
$$
\nu_{>n_k-1}(I) \ge \frac{1}{N_{n_k}N_{n_k+1}}\ge \frac{1}{M^2}.
$$
Hence, if $\nu$ is a weak limit of $\{\nu_{>n_k-1}\}$,
$$
\nu(I) \ge \limsup_{k\rightarrow \infty} \nu_{>n_k-1}(I)\ge \frac{1}{M^2}.
$$
This shows that the weak limit cannot be the measure $\rho$ since an interval away from 1 has a positive measure. In particular, the integral periodic zero set is empty by Theorem \ref{prop_Dirac_measure}.
\end{proof}

\medskip

These propositions show that as long as the measure stays away from 1 or $N_j$ is not growing up, all the resulting Cantor-Moran measures are spectral. Now, the proof of Theorem \ref{maintheorem1} in apparent.

\bigskip

\noindent{\bf Proof of Theorem \ref{maintheorem1}.} By Lemma \ref{Lemma_rho}, the assumption in Theorem \ref{maintheorem1} implies that $\{\nu_{>n}\}$ is an admissible family. Hence, the spectrality or frame-spectrality follows from Theorem \ref{thadmissible}. \eproof 

\bigskip

However, non-admissible Cantor-Moran measures exist.

\begin{example}
Let $N_n = 2^{2n}$ and let $B_n = \{0, 2^{2n}-1\}$. Then $(N_n,B_n,L_n)$ forms a Hadamard triple with $L_n = \{0, 2^{2n-1}\}$. Moreover, $\{\nu_{>n}\}$ converges weakly to $\rho$.
\end{example}

\begin{proof}
The fact that it is a Hadamard triple follows from a direct check. For the weak convergence, we note that the support of $\nu_{>n}$ is contained in $[0,2^{-{2n}}]\cup [1-2^{-2n},1]$. Hence, for all $\delta>0$, $\lim_{n\rightarrow\infty}\nu_{>n}(\delta,1-\delta) = 0$. Hence, any weak limit of $\{\nu_{>n}\}$ must be supported on $\{0,1\}$. But with $n$ sufficiently large,  $\nu_{>n}[0,\delta] =\frac12$. This shows that the weak limit must be $\rho$.
\end{proof}

\medskip

In fact, as long as we take $N_n>> \#B_n$ with $B_n$ concentrating very closely at $0, N_n-1$, we can easily construct Cantor-Moran measures $\{\nu_{>n}\}$ converging weakly to $\rho$. In this case, admissibility condition fails.   
\medskip

\section{Non-admissible Cantor-Moran measures (I): $\liminf_{n\rightarrow\infty} \# B_n <\infty$.}
\subsection{Non-admissible but equi-positive family.} Theorem \ref{maintheorem1} tells us that we are left with the case where $\{\nu_{>n}\}$ converges weakly to $\rho$ or equivalently, admissible subsequence is not available. In this case, we need to study the validity of the equi-positivity condition.

\medskip

 
  From this section and on, we will focus on the situation that $\{\nu_{>n}\}$ converges weakly to $\rho$.  In order to use Theorem \ref{theorem_admissible_spectral} to show the spectrality of the Cantor-Moran measure   $\mu(N_n,B_n)$, we need to establish the existence of the equi-positive subsequence.  The following proposition captures all the equivalent conditions  we need to study.
  
  \medskip
  
  \begin{proposition}\label{prop_equiv-positive}
  Suppose that $\{\nu_{n}\}$ converges weakly to $\rho$. Then the following are equivalent.
  \begin{enumerate}
      \item   there exists an equi-positive subsequence $\{\nu_{n_j}\}$.
      \item $\{\widehat{\nu_{n}}\}$ does not converge uniformly to $\widehat{\rho}$ on $\frac12+\Z$. 
      \item   there exists a subsequence $\{\nu_{n_j}\}$ such that the following property holds: there exists $\epsilon_0>0$,  for any $j\ge 1$, we can find $k_j$ such that
      $$\left|\widehat{\nu_{n_j}}(\frac12+k_j)\right|\ge \epsilon_0.$$
  \end{enumerate}
  \end{proposition}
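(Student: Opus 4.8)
The plan is to establish the cycle of implications (i) $\Rightarrow$ (iii), the definitional equivalence (ii) $\Leftrightarrow$ (iii), and the substantive implication (iii) $\Rightarrow$ (i). Throughout I would exploit the explicit computation $\widehat{\rho}(x+k)=\frac{1+e^{-2\pi i x}}{2}=e^{-\pi i x}\cos(\pi x)$, so that $|\widehat{\rho}(x+k)|=|\cos(\pi x)|$ for every $k\in\mathbb{Z}$; on $[0,1]$ this vanishes only at the single point $x=\frac12$ (cf. Remark \ref{remark}). This is the structural reason the problem localizes entirely at $\frac12$.

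The implication (i) $\Rightarrow$ (iii) is immediate: given an equi-positive subsequence $\{\nu_{n_j}\}$ with constant $\epsilon_0$, I would specialize the definition of equi-positivity to the point $x=\frac12$, producing integers $k_j:=k_{1/2,\nu_{n_j}}$ with $|\widehat{\nu_{n_j}}(\frac12+k_j)|\ge\epsilon_0$, which is exactly (iii). The equivalence (ii) $\Leftrightarrow$ (iii) is a straightforward unwinding of the definition of uniform convergence together with $\widehat{\rho}\equiv 0$ on $\frac12+\mathbb{Z}$: uniform convergence of $\{\widehat{\nu_n}\}$ to $\widehat{\rho}$ on $\frac12+\mathbb{Z}$ means $\sup_{k\in\mathbb{Z}}|\widehat{\nu_n}(\frac12+k)|\to 0$, and its negation is precisely the existence of $\epsilon_0>0$ and a subsequence along which this supremum stays $\ge\epsilon_0$; passing from the supremum to an attained value (at the cost of halving $\epsilon_0$) yields statement (iii), and the converse direction is trivial.

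The heart of the matter is (iii) $\Rightarrow$ (i). Fix the subsequence $\{\nu_{n_j}\}$, the constant $\epsilon_0$, and the integers $k_j$ furnished by (iii), and argue in two regimes. Near $\frac12$: by the equicontinuity of $\widehat{{\mathcal P}}([0,1])$ (Lemma \ref{equicontinuity}) there is $\delta_0>0$, independent of the measure, with $|\widehat{\nu_{n_j}}(\xi)-\widehat{\nu_{n_j}}(\eta)|<\epsilon_0/2$ whenever $|\xi-\eta|<\delta_0$; hence for every $x$ with $|x-\frac12|<\delta_0$ the shift $k_j$ gives $|\widehat{\nu_{n_j}}(x+k_j)|\ge\epsilon_0/2$. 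Away from $\frac12$: shrinking $\delta_0\le\frac12$ if needed, set $c_0:=\sin(\pi\delta_0)$, which is the minimum of $|\cos(\pi x)|$ over $\{x\in[0,1]:|x-\frac12|\ge\delta_0\}$. Since $\nu_n\to\rho$ weakly, Lemma \ref{lem_weak_equivalence} gives uniform convergence of $\widehat{\nu_n}$ to $\widehat{\rho}$ on the compact set $[0,1]$, so after discarding finitely many $j$ (passing to a tail of the subsequence, harmless for the conclusion) I may assume $|\widehat{\nu_{n_j}}(x)-\widehat{\rho}(x)|<c_0/2$ for all $x\in[0,1]$; then for $|x-\frac12|\ge\delta_0$ the shift $k=0$ gives $|\widehat{\nu_{n_j}}(x)|\ge c_0-c_0/2=c_0/2$. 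Combining the two regimes with $\epsilon_0':=\min\{\epsilon_0/2,\,c_0/2\}$ produces an equi-positive subsequence.

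The step requiring the most care is (iii) $\Rightarrow$ (i): one must verify that the modulus of continuity $\delta_0$ coming from equicontinuity is genuinely uniform over the whole family (which is exactly what Lemma \ref{equicontinuity} provides, since every $\nu_{n_j}\in{\mathcal P}([0,1])$), and that the two mechanisms—``near $\frac12$ via equicontinuity of a single nonvanishing value'' and ``away from $\frac12$ via weak convergence to $\rho$''—patch together to cover all of $[0,1]$ with one positive constant. The only genuine subtlety is that the far-field estimate holds only for large $j$, which is resolved by passing to a tail, legitimate because the conclusion asks merely for the existence of \emph{some} equi-positive subsequence.
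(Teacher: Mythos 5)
Your proposal is correct and follows essentially the same route as the paper's proof: (i)$\Rightarrow$(iii) by specializing equi-positivity at $x=\tfrac12$, (ii)$\Leftrightarrow$(iii) by unwinding uniform convergence, and (iii)$\Rightarrow$(i) via the same two-regime argument (equicontinuity of $\widehat{{\mathcal P}}([0,1])$ near $\tfrac12$ with the shift $k_j$, and uniform convergence to $\widehat{\rho}$ on $[0,1]$ with the lower bound $\tfrac12\sin(\pi\delta_0)$ and shift $0$ away from $\tfrac12$). No gaps.
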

  
  \begin{proof}
That  (i) implies (ii) is just the definition of equi-positivity at $x= \frac12$.   As $\{\nu_{n}\}$ weakly converges to $\rho$ and $\widehat{\rho}(\frac12+k) = 0$ for all $k\in\Z$. In view of this, (ii) and (iii) are equivalent by the definition of uniform convergence.  
  
  \medskip
  
  We now suppose  that (iii) (or equivalently (ii)) holds. Then  there exists a subsequence $\{\nu_{n_j}\}$ such that the following property holds: there exists $\epsilon_0>0$,  for any $j\ge 1$, we can find $k_j$ such that
      $$
      \left|\widehat{\nu_{n_j}}(\frac12+k_j)\right|\ge \epsilon_0.
      $$
      By the equicontinunity of  $\widehat{\nu_{n_j}}$, we can find $\delta_0>0$, independent of $j$, such that 
      $$
      \left|\widehat{\nu_{n_j}}(\frac12+x+k_j)\right|\ge \frac{\epsilon_0}{2}.
      $$
      for all $|x|\le \delta_0$. We can take $k_{x,\nu_{n_j}} = k_j$ for $x\in\left[\frac12-\delta_0, \frac12+\delta_0\right]$.  On the other hand, 
      it is known that $\{\widehat{\nu_{n_k}}\}$ converges uniformly to $\widehat{\rho}(\xi)$ on $[0, 1]$. Therefore, for the  positive constant $\frac{1}{2}\sin(\pi\delta_0)$, there exists $J>0$ such that for all $j\geq J$ we have 
\begin{equation*}\label{low-bdd2}
    |\widehat{\nu_{n_j}}(x)-\widehat{\rho}(x)|<\frac{1}{2}\sin(\pi\delta_0), \quad x\in[0, 1].  
\end{equation*}
Note that $\widehat{\rho}(x) = e^{\pi i x} \cos (\pi x)$. Therefore,  for $x\in[0, 1]\setminus\left[\frac12-\delta_0, \frac12+\delta_0\right]$, we have 
$$|\widehat{\nu_{n_j}}(x)|\geq |\cos\pi x|-\frac{1}{2}\sin(\pi\delta_0)\geq\frac{1}{2}\sin(\pi\delta_0).$$
 Hence, we can let $k_{x,\nu_{n_j}} = 0$ for all $x\in[0, 1]\setminus\left[\frac12-\delta_0, \frac12+\delta_0\right]$. This shows that 
 $\{\nu_{n_j}\}_{j=J}^{\infty}$ is an equi-positive subsequence with uniform lower bound  $\min\left\{\epsilon_0/2,\frac{1}{2}\sin(\pi\delta_0)\right\}$ in the definition of equi-positivity.
  \end{proof}

  \medskip
  
   \subsection{The case $\liminf_{n\rightarrow\infty}\#B_{n}<\infty$.} In this case,  we can find a positive integer $M>0$ and a subsequence $\{n_j\}$ such that $\sup_{j} \#B_{n_j} \le M$.   Here we need the following Wiener theorem  concerning how the discrete part of a Borel measure $\mu$ on ${\mathbb T}$ can be ``recovered" from its Fourier-Stieltjes series.
 
 \medskip
 
 \begin{theorem}\label{discrete-inverse-theorem}
Let $\mu$ be any complex Borel measure on ${\mathbb T}$. Then
$$
\sum_{\tau} |\mu(\{\tau\})|^2 = \lim_{N\rightarrow\infty} \frac{1}{2N+1} \sum_{k= -N}^{N}|\widehat{\mu}(k)|^2,
$$
where the sum on the left is taken  over all the atoms of $\mu$. 
\end{theorem}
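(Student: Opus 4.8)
The plan is to prove this classical identity of Wiener by expanding each $|\widehat{\mu}(k)|^2$ as a double integral against the product measure $\mu\times\overline{\mu}$ and then averaging the resulting exponentials into a Dirichlet kernel. Writing $\widehat{\mu}(k)=\int_{\mathbb T}e^{-2\pi i k x}\,d\mu(x)$ and letting $\overline{\mu}$ denote the complex-conjugate measure, I would first record
$$
|\widehat{\mu}(k)|^2 = \int_{\mathbb T}\int_{\mathbb T} e^{-2\pi i k (x-y)}\,d\mu(x)\,d\overline{\mu}(y).
$$
Since $\mu$ is a complex Borel measure on the compact group $\mathbb T$, its total variation $|\mu|$ is finite, so $|\mu|\times|\mu|$ is a finite measure and Fubini's theorem applies freely below. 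Summing over $-N\le k\le N$ and dividing by $2N+1$ gives
$$
\frac{1}{2N+1}\sum_{k=-N}^{N}|\widehat{\mu}(k)|^2 = \int_{\mathbb T}\int_{\mathbb T} D_N(x-y)\,d\mu(x)\,d\overline{\mu}(y),
$$
where $D_N(t)=\frac{1}{2N+1}\sum_{k=-N}^{N}e^{-2\pi i k t}=\frac{1}{2N+1}\cdot\frac{\sin((2N+1)\pi t)}{\sin(\pi t)}$ is the normalized Dirichlet kernel.

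Next I would analyze the pointwise behaviour of $D_N$. Being an average of $2N+1$ unimodular terms, $|D_N(t)|\le 1$ for every $t$; moreover $D_N(t)=1$ when $t\in\mathbb Z$, while for $t\notin\mathbb Z$ the denominator $|\sin(\pi t)|$ is bounded away from $0$, so $D_N(t)\to 0$. Identifying $\mathbb T$ with $[0,1)$, the condition $x-y\in\mathbb Z$ is exactly $x=y$, so $D_N(x-y)\to\mathbf 1_{\Delta}(x,y)$ pointwise on $\mathbb T\times\mathbb T$, where $\Delta=\{(x,x):x\in\mathbb T\}$ is the diagonal. Because $|D_N|\le 1$ and $|\mu|\times|\mu|$ is finite, the dominated convergence theorem yields
$$
\lim_{N\to\infty}\frac{1}{2N+1}\sum_{k=-N}^{N}|\widehat{\mu}(k)|^2 = \int_{\mathbb T}\int_{\mathbb T}\mathbf 1_{\Delta}(x,y)\,d\mu(x)\,d\overline{\mu}(y)=(\mu\times\overline{\mu})(\Delta).
$$

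Finally I would evaluate the product measure of the diagonal. By Fubini the inner integral is $\int_{\mathbb T}\mathbf 1_{\{x=y\}}\,d\mu(x)=\mu(\{y\})$, a function that is nonzero only on the countable set of atoms of $\mu$; integrating it against $d\overline{\mu}(y)$ therefore collapses to a sum over atoms, so that $(\mu\times\overline{\mu})(\Delta)=\sum_{\tau}\mu(\{\tau\})\,\overline{\mu}(\{\tau\})=\sum_{\tau}|\mu(\{\tau\})|^2$, the desired right-hand side. The only point that needs genuine care is this last step: one must verify that the continuous part of $\mu$ contributes nothing to the diagonal, which is precisely the statement that $\mu(\{y\})$ vanishes off the atoms, together with the justification of Fubini for the complex (signed) measure through its finite total variation. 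The Dirichlet-kernel limit and the dominated-convergence step are elementary by comparison, so I expect the diagonal computation to be the main obstacle.
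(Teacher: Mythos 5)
Your argument is correct. Note that the paper itself offers no proof of this statement: it is quoted as a classical theorem of Wiener (it appears, with essentially the argument you give, in Katznelson's book, which the paper cites), so there is no in-paper proof to compare against. Your route --- writing $|\widehat{\mu}(k)|^2$ as a double integral against $\mu\times\overline{\mu}$, averaging into the normalized Dirichlet kernel $D_N$, passing to the limit by dominated convergence using $|D_N|\le 1$ and the finiteness of $|\mu|\times|\mu|$, and then evaluating $(\mu\times\overline{\mu})(\Delta)$ by Fubini --- is exactly the standard proof, and every step is justified: the diagonal is closed hence measurable, the section function $y\mapsto\mu(\{y\})$ is measurable and supported on the countable atom set, and for a singleton $|\mu|(\{\tau\})=|\mu(\{\tau\})|$, so the final sum over atoms is as claimed.
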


\medskip
The following lemma will be needed.
\medskip

\begin{lemma}\label{lemma_discrete}
 Suppose that $\{\nu_{>n}\}$ converges weakly to $\rho$. If there exist $\epsilon_0>0$, $\delta_0>0$ and  a subsequence $\{n_j\}$ such that for all $j\geq1$, we can find integers $k_{j}$ such that $\frac{k_{j}}{N_{n_j}}\in[0,1]\setminus\left(\frac12-\delta_0,\frac12+\delta_0\right)$ and 
$$
\left|\widehat{\delta_{B_{n_j}/N_{n_j}}}(\frac12+k_{j})\right|\ge \epsilon_0.
$$
Then $\{\nu_{>n}\}$ has an equi-positive subsequence.
\end{lemma}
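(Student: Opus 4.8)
The plan is to connect the lower bound on $|\widehat{\delta_{B_{n_j}/N_{n_j}}}(\tfrac12+k_j)|$ to a lower bound on $|\widehat{\nu_{>n_j}}|$ at a suitable point of $\tfrac12+\Z$, and then invoke Proposition \ref{prop_equiv-positive}. The key observation is that $\nu_{>{n_j-1}}$, the pull-back of the tail starting at level $n_j$, factors as a convolution whose first factor is exactly $\delta_{B_{n_j}/N_{n_j}}$, so that
\begin{equation*}
\widehat{\nu_{>(n_j-1)}}(\xi) = \widehat{\delta_{B_{n_j}/N_{n_j}}}(\xi)\cdot \widehat{\nu_{>n_j}}\!\left(\frac{\xi}{N_{n_j}}\right).
\end{equation*}
Thus at $\xi = \tfrac12+k_j$ the first factor is bounded below by $\epsilon_0$, and it remains to bound the second factor $\widehat{\nu_{>n_j}}((\tfrac12+k_j)/N_{n_j})$ away from $0$.

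The reason the hypothesis $\tfrac{k_j}{N_{n_j}}\in[0,1]\setminus(\tfrac12-\delta_0,\tfrac12+\delta_0)$ is imposed is precisely to control this second factor. Writing $y_j = (\tfrac12+k_j)/N_{n_j}$, the constraint forces $y_j$ (modulo $\Z$, or after a translation by an integer) to stay in a compact subset of $[0,1]$ that is bounded away from the single zero $\tfrac12$ of $\widehat{\rho}$. First I would use that $\{\nu_{>n}\}$ converges weakly to $\rho$, so by Lemma \ref{lem_weak_equivalence}(v) the transforms $\widehat{\nu_{>n_j}}$ converge uniformly on the compact set $[0,1]$ to $\widehat{\rho}(x)=e^{\pi i x}\cos(\pi x)$. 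Since $|\cos(\pi x)|$ is bounded below by $\sin(\pi\delta_0)$ on $[0,1]\setminus(\tfrac12-\delta_0,\tfrac12+\delta_0)$, the same argument used at the end of Proposition \ref{prop_equiv-positive} gives, for all large $j$, a uniform lower bound $|\widehat{\nu_{>n_j}}(y_j)|\ge \tfrac12\sin(\pi\delta_0)$.

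Combining the two factor estimates yields $|\widehat{\nu_{>(n_j-1)}}(\tfrac12+k_j)|\ge \epsilon_0\cdot\tfrac12\sin(\pi\delta_0)$ for all large $j$, which is a uniform lower bound over $\tfrac12+\Z$ along the subsequence. By Proposition \ref{prop_equiv-positive} (the equivalence of condition (iii) with the existence of an equi-positive subsequence), this shows that $\{\nu_{>(n_j-1)}\}$, and hence $\{\nu_{>n}\}$, has an equi-positive subsequence, as desired. The main obstacle is the bookkeeping at the boundary: one must verify that the multiplicative factorization is applied at the correct level (the subsequence $\{n_j-1\}$ rather than $\{n_j\}$) and that the rescaled argument $y_j$ genuinely lands in the region where $|\widehat{\rho}|$ is bounded away from zero, which is exactly what the hypothesis on $k_j/N_{n_j}$ guarantees. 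The Wiener theorem (Theorem \ref{discrete-inverse-theorem}) stated just before the lemma is not needed for this particular argument but is presumably reserved for producing the hypothesized $k_j$ in the subsequent application where $\#B_{n_j}$ is bounded.
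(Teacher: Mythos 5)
Your argument is correct and is essentially the paper's own proof: the same factorization $\widehat{\nu_{>(n_j-1)}}(\xi)=\widehat{\delta_{B_{n_j}/N_{n_j}}}(\xi)\,\widehat{\nu_{>n_j}}(\xi/N_{n_j})$, the same use of the hypothesis on $k_j/N_{n_j}$ to keep the rescaled point away from $\tfrac12$ so that uniform convergence to $\widehat{\rho}$ on $[0,1]$ gives a lower bound on the second factor, and the same appeal to Proposition \ref{prop_equiv-positive}(iii). The only cosmetic difference is that the rescaled point lands in $[0,1]\setminus(\tfrac12-\tfrac{\delta_0}{2},\tfrac12+\tfrac{\delta_0}{2})$, so your explicit constant should be $\tfrac12\sin(\pi\delta_0/2)$ rather than $\tfrac12\sin(\pi\delta_0)$, which changes nothing.
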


\begin{proof}
Suppose that we can find integers $k_{j}$ such that $\frac{k_{j}}{N_{n_j}}\in[0,1]\setminus\left(\frac12-\delta_0,\frac12+\delta_0\right)$ and
 $$
 \left|\widehat{\delta_{B_{n_j}/N_{n_j}}}(\frac12+k_{j})\right|\ge \epsilon_0.
 $$
 As we have
 $$
\widehat{\nu_{>(n_j-1)}}(\frac12+k_{j}) = \widehat{\delta_{B_{{n_j}}/N_{n_j}}} (\frac12+k_{j}) \cdot \widehat{\nu_{>(n_j)}}\left(\frac{\frac12+k_{j}}{N_{n_j}}\right),
$$
with $\frac{\frac12+k_{j}}{N_{n_j}} \in [0,1]\setminus \left(\frac12-\frac{\delta_0}{2},\frac12+\frac{\delta_0}{2}\right)$ for $j$ large enough. Using the fact that $\{\widehat{\nu_{>n_j}}\}$ converges uniformly to $\widehat{\rho}$ on $[0,1]\setminus (\frac12-\delta_0,\frac12+\delta_0)$, it means that we can find $\epsilon_1$ independent of $j$ such that  for $j$ large enough,
$$\left|\widehat{\nu_{>n_j}}(\frac{\frac12+k_{j}}{N_{n_j}})\right| \ge \epsilon_1.$$
This implies that 
$$
\left|\widehat{\nu_{>(n_j-1)}}(\frac12+k_{j})\right|\ge \epsilon_0\cdot\epsilon_1>0.
$$
Proposition \ref{prop_equiv-positive} (iii) holds, and thus $\{\nu_{>n}\}$ has an equi-positive subsequence. 
\end{proof}

\medskip

\begin{theorem}\label{lemma_bounded_B}
Let $N_n\ge 2$ be integer and let $B_n\subset\{0,1,...,N_n-1\}$. Suppose that $\liminf_{n\rightarrow\infty} \#B_n<\infty$. Then there exist $\epsilon_0>0$ and $\delta_0>0$ such that for all $n\ge 1$, we can find integer $k$ such that $\frac{k}{N_n}\in[0,1]\setminus\left(\frac12-\delta_0,\frac12+\delta_0\right)$ and 
$$
\left|\widehat{\delta_{B_n/N_n}}(\frac12+k)\right|\ge \epsilon_0.
$$
Hence, $\{\nu_{>n}\}$ is an equi-positive sequence.
\end{theorem}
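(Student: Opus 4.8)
The plan is to prove the displayed lower bound by an $L^2$-averaging (orthogonality) argument over a full period of frequencies $k$, and then to discard the few frequencies lying near $\tfrac12$ by a pigeonhole estimate. Since $\liminf_{n\to\infty}\#B_n<\infty$, I fix $M_0$ and a subsequence (which for brevity I keep calling $\{n\}$) so that $\#B_n\le M_0$ on it; it is only this subsequence that must feed Lemma \ref{lemma_discrete}. Writing $M=\#B_n$, the central computation is the exact identity
$$\sum_{k=0}^{N_n-1}\Bigl|\widehat{\delta_{B_n/N_n}}\bigl(\tfrac12+k\bigr)\Bigr|^2=\frac{N_n}{M},$$
obtained by expanding the square into $\frac1{M^2}\sum_{b,b'}e^{-\pi i(b-b')/N_n}\sum_{k=0}^{N_n-1}e^{-2\pi i k(b-b')/N_n}$ and observing that the inner geometric sum equals $N_n$ when $b=b'$ and $0$ otherwise, because $b,b'\in\{0,\dots,N_n-1\}$ are then equal precisely when congruent mod $N_n$ (the prefactor coming from the $\tfrac12$ becomes $1$ on the surviving diagonal). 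Hence the squared coefficients carry total mass $N_n/M$ over one period, i.e.\ average $1/M\ge 1/M_0$.

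Next I would localise away from $\tfrac12$. Put $S=\{k\in\{0,\dots,N_n-1\}:\tfrac{k}{N_n}\notin(\tfrac12-\delta_0,\tfrac12+\delta_0)\}$, with complement $S^{c}$; the latter is an index interval of length $2\delta_0N_n$, so $\#S^{c}\le 2\delta_0N_n+1$, and since $|\widehat{\delta_{B_n/N_n}}|\le 1$ each such term contributes at most $1$. Removing $S^c$ from the identity and pigeonholing over the at most $N_n$ surviving frequencies,
$$\max_{k\in S}\Bigl|\widehat{\delta_{B_n/N_n}}\bigl(\tfrac12+k\bigr)\Bigr|^2\ \ge\ \frac{1}{N_n}\Bigl(\frac{N_n}{M}-2\delta_0N_n-1\Bigr)\ =\ \frac1M-2\delta_0-\frac{1}{N_n}.$$
Choosing $\delta_0=\tfrac1{8M_0}$ gives $\tfrac1M-2\delta_0\ge\tfrac1{M_0}-\tfrac1{4M_0}=\tfrac{3}{4M_0}$, so for every $N_n>4M_0$ the right-hand side exceeds $\tfrac1{2M_0}$; this produces a $k$ with $\tfrac{k}{N_n}\notin(\tfrac12-\delta_0,\tfrac12+\delta_0)$ and $|\widehat{\delta_{B_n/N_n}}(\tfrac12+k)|\ge (2M_0)^{-1/2}=:\epsilon_0$.

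The main obstacle is the small-$N_n$ regime $2\le N_n\le 4M_0$, where the crude bound $\tfrac1M-2\delta_0-\tfrac1{N_n}$ can fail to be positive, the extreme case being $M=N_n$, i.e.\ $B_n=\{0,\dots,N_n-1\}$. There are only finitely many pairs $(N_n,B_n)$ with $N_n\le 4M_0$, so I would finish with a direct finite verification. When $M<N_n$ the identity gives total mass $N_n/M>1$, which together with $|\widehat{\delta_{B_n/N_n}}|\le1$ forces at least two nonzero coefficients, hence one with $\tfrac{k}{N_n}\neq\tfrac12$; and the full-set case is handled explicitly by $k=0$, where $|\widehat{\delta_{B_n/N_n}}(\tfrac12)|=\bigl(N_n\sin\tfrac{\pi}{2N_n}\bigr)^{-1}\ge \tfrac2\pi$. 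Taking the minimum over these finitely many pairs (and shrinking $\delta_0$ below the least distance from $\tfrac12$ to the relevant points $\tfrac{k}{N_n}$) yields uniform constants, which combined with the large-$N_n$ regime give the stated $\epsilon_0,\delta_0$. The final assertion that $\{\nu_{>n}\}$ is equi-positive is then immediate from Lemma \ref{lemma_discrete}, whose hypothesis is exactly what the above provides. (For the application alone one may instead pass to a further subsequence on which either $N_{n_j}>4M_0$ always, or $(N_{n_j},B_{n_j})$ is constant, thereby sidestepping the finite check entirely.)
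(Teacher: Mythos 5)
Your proposal is correct, and at its core it is the same argument as the paper's: both rest on the identity $\frac1{\#B_n}=\frac1{N_n}\sum_{k=0}^{N_n-1}\bigl|\widehat{\delta_{B_n/N_n}}(\frac12+k)\bigr|^2$ together with the observation that the frequencies $k$ with $k/N_n$ near $\frac12$ are too few to carry all of this mass. The differences are in packaging. You derive the identity by direct character orthogonality (expanding the square and summing the geometric series), whereas the paper obtains it from Wiener's theorem on the atoms of the twisted measure $e^{-\pi i x}d\delta_{B_n/N_n}$; your route is more elementary and self-contained. You also run the estimate directly with explicit constants ($\delta_0=\frac1{8M_0}$, $\epsilon_0=(2M_0)^{-1/2}$ for $N_n>4M_0$), while the paper argues by contradiction along a diagonal sequence. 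Your direct version forces you to treat the finitely many pairs with $N_n\le 4M_0$ separately, and you do so correctly (the "at least two nonzero coefficients" pigeonhole when $\#B_n<N_n$, and the explicit $k=0$ computation when $B_n=\{0,\dots,N_n-1\}$); this is a genuine gain in care, since the paper's bound on the near-$\frac12$ sum silently drops a $+1$ in the lattice-point count that only matters when $N_{n_j}$ stays bounded. Finally, you are right to restrict to the subsequence on which $\#B_n\le M_0$: that is all the hypothesis $\liminf_n\#B_n<\infty$ provides and all that Lemma \ref{lemma_discrete} requires, and it is what the paper's own proof implicitly does despite the "for all $n\ge1$" phrasing of the statement.
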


\begin{proof}
Suppose that the conclusion is false. For any $j>2$, we can find $n_{j}$ such that for all integer $k$ with $\frac{k}{N_{n_j}}\not\in\left(\frac12-\frac1{2^j},\frac12+\frac1{2^j}\right)$, we have
\begin{equation}\label{eqsmall}
\left|\widehat{\delta_{B_{n_j}/N_{n_j}}}(\frac12+k)\right|<\frac1{2^j}.
\end{equation}
Denote $\nu_j$ to be the measure $\delta_{B_{n_j}/N_{n_j}}$. Since $\nu_{j}(\{1\}) = 0$, we can identify $\nu_j$ as a probability measure on ${\mathbb T}$.   Consider the complex measure 
$$
\nu_j'(E) = \int_E e^{-2\pi i \frac12x} d\nu_j(x).
$$
Then $\widehat{\nu_j'}(k) = \widehat{\nu_j}(\frac12+k)$. Moreover, $\nu_{j}'(\{b/N_{n_j}\}) = \frac{e^{-\pi i b/N_{n_j}}}{\#B_{n_j}}$ for $b\in B_{n_j}$ and these are all the atoms of $\nu_{j}'$.  Using Theorem \ref{discrete-inverse-theorem}, we have 
$$
\frac{1}{\#B_{n_j}} = \sum_{b\in B_{n_j}}\left|\nu_j'(\{b/N_{n_j}\})\right|^2 = \lim_{\ell\rightarrow \infty} \frac{1}{2 \ell N_{n_j}} \sum_{k= -\ell N_{n_j}}^{\ell N_{n_j}-1}\left |\widehat{\nu_j}(\frac12+k)\right|^2.
$$
As $\widehat{\nu_{j}}$ is $N_{n_j}$-periodic, we have 
\begin{equation}\label{eqfinitetight}
\begin{aligned}
\frac{1}{\#B_{n_j}} = &\lim_{\ell\rightarrow \infty} \frac{2\ell}{2 \ell N_{n_j}} \sum_{k= 0}^{ N_{n_j}-1} \left |\widehat{\nu_j}(\frac12+k)\right|^2\\
=&\frac{1}{ N_{n_j}}  \sum_{k= 0}^{ N_{n_j}-1} \left |\widehat{\nu_j}(\frac12+k)\right|^2\\
 = &\frac{1}{ N_{n_j}} \sum_{k/N_{n_j}\in\left(\frac12-\frac1{2^j},\frac12+\frac1{2^j}\right)} \left|\widehat{\nu_j}(\frac12+k)\right|^2 +\frac1{N_{n_j}}\sum_{k/N_{n_j}\not\in\left(\frac12-\frac1{2^j},\frac12+\frac1{2^j} \right)}\left|\widehat{\nu_j}(\frac12+k)\right|^2.\\
\end{aligned}
\end{equation}
As $|\widehat{\nu_j}(1/2+k)|\le 1$,
The first sum 
$$
\begin{aligned}
\frac{1}{ N_{n_j}} \sum_{k/N_{n_j}\in\left(\frac12-\frac1{2^j},\frac12+\frac1{2^j}\right)} |\widehat{\nu_j}(\frac12+k)|^2 \le& \frac{1}{N_{n_j}} \#\left\{\frac{k}{N_{n_j}}: \frac{k}{N_{n_j}}\in\left(\frac12-\frac1{2^j},\frac12+\frac1{2^j}\right)\right\}\\
\le& \frac{1}{N_{n_j}} \frac{1/2^{j-1}}{1/N_{n_j}} = \frac{1}{2^{j-1}}.\\
\end{aligned}
$$
Using (\ref{eqsmall}) and the fact that there are at most $N_{n_j}$ terms in the summation, the second sum 
$$
\frac1{N_{n_j}}\sum_{k/N_{n_j}\not\in\left(\frac12-\frac1{2^j},\frac12+\frac1{2^j} \right)}\left|\widehat{\nu_j}(\frac12+k)\right|^2 \le \frac{1}{2^j}.
$$
Combining with the fact that $\#B_{n_j}\le M$, we have 
$$
\frac{1}{M}\le \frac{1}{\#B_{n_j}} \le \frac{1}{2^{j-1}}+\frac{1}{2^{j}}.
$$
As $j$ can be arbitrarily large, the above cannot happen and we have a contradiction. This shows that our desired statement holds.
\end{proof}

\medskip

\begin{remark}
The key step of the proof of Theorem \ref{lemma_bounded_B} is to establish an identity
$$
\frac{1}{\#B_{n_j}} = \frac{1}{N_{n_j}}  \sum_{k= 0}^{ N_{n_j}-1} \left |\widehat{\nu_j}(\frac12+k)\right|^2.
$$
in (\ref{eqfinitetight}). This is actually equivalent to saying that $\{e^{2\pi i kx}: k = 0,1,...,N_{n_j}-1\}$ forms a tight Fourier frame for the measure $\nu_j = \delta_{B_{n_j}/N_{n_j}}$. The fact can also be deduced from finite frame theory (see e.g. \cite[Section 10]{DHL2017}). We leave it as an exercise for interested reader.
\end{remark}

\noindent{\bf Proof of Theorem \ref{maintheorem2}.}  Theorem \ref{maintheorem2} is a consequence of Theorem \ref{lemma_bounded_B} and Theorem \ref{theorem_admissible_spectral}\eproof

\medskip

 \section{Non-admissible Cantor-Moran measures (II): \ $\lim_{n\rightarrow\infty}\#B_{n} = \infty$.} After the previous section, the cases that we still cannot solve  are those $\#B_n$ that is not bounded on any subsequence.   Equivalently, $\lim_{n\rightarrow\infty}\#B_{n} = \infty$.  This also implies that $\lim\limits_{n\rightarrow\infty}N_{n} = \infty$. In this situation, we first show that  $\{\delta_{B_{n}/N_{n}}\}$ weakly converges to $\rho$.
 
 \medskip
 
 \begin{lemma}\label{lemma delta measure convergence}
Suppose that $\lim\limits_{n\rightarrow \infty}\#B_{n} = \infty$ and $\{\nu_{>n}\}$ weakly converges to $\rho$. Then $\{\delta_{B_{n}/N_{n}}\}$ weakly converges to $\rho$.
 \end{lemma}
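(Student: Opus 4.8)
The plan is to exploit the one-step factorization of the tail measures at the level of Fourier transforms and then let $n\to\infty$. First I would record the elementary identity
\begin{equation*}
\widehat{\nu_{>(n-1)}}(\xi) = \widehat{\delta_{B_n/N_n}}(\xi)\cdot\widehat{\nu_{>n}}\!\left(\frac{\xi}{N_n}\right),
\end{equation*}
which follows by separating the first Dirac factor of the Cantor-Moran measure $\nu_{>(n-1)} = \delta_{\frac{1}{N_n}B_n}\ast\delta_{\frac{1}{N_nN_{n+1}}B_{n+1}}\ast\cdots$ and observing that the remaining factors are exactly those of $\nu_{>n}$ with the argument rescaled by $N_n$. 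Because $B_n\subset\{0,\dots,N_n-1\}$ forces $N_n\ge\#B_n$, the hypothesis $\#B_n\to\infty$ gives $N_n\to\infty$. I also note that $\{\nu_{>(n-1)}\}$ is merely an index shift of $\{\nu_{>n}\}$ and hence converges weakly to $\rho$ as well.

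Next I would control the second factor. Since $\widehat{\nu_{>n}}(0)=1$ and, by the Lipschitz estimate established in the proof of Lemma \ref{equicontinuity}, $|\widehat{\nu_{>n}}(\eta)-\widehat{\nu_{>n}}(0)|\le 2\pi|\eta|$ uniformly in $n$, for every fixed $\xi$ we obtain
\begin{equation*}
\left|\widehat{\nu_{>n}}\!\left(\frac{\xi}{N_n}\right)-1\right|\le \frac{2\pi|\xi|}{N_n}\longrightarrow 0 ,
\end{equation*}
so that $\widehat{\nu_{>n}}(\xi/N_n)\to 1$, uniformly on compact sets of $\xi$. On the other hand, since $\{\nu_{>(n-1)}\}$ converges weakly to $\rho$, Lemma \ref{lem_weak_equivalence}(v) gives $\widehat{\nu_{>(n-1)}}\to\widehat{\rho}$ uniformly on compacts. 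Dividing in the factorization identity then yields the pointwise limit $\widehat{\delta_{B_n/N_n}}(\xi)\to\widehat{\rho}(\xi)$ for every $\xi$.

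Finally I would upgrade this pointwise convergence of Fourier transforms to weak convergence of the measures. The family $\{\widehat{\delta_{B_n/N_n}}\}$ is equicontinuous by Lemma \ref{equicontinuity} and uniformly bounded by $1$; an equicontinuous, pointwise-convergent sequence converges uniformly on every compact set, so $\widehat{\delta_{B_n/N_n}}\to\widehat{\rho}$ uniformly on compacts. Applying the equivalence in Lemma \ref{lem_weak_equivalence}(v) with target probability measure $\rho$ then delivers $\delta_{B_n/N_n}\to\rho$ weakly, as desired.

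I expect the only genuinely delicate point to be the division step used to isolate $\widehat{\delta_{B_n/N_n}}$: one must guarantee that $\widehat{\nu_{>n}}(\xi/N_n)$ does not vanish where it is used. This is harmless precisely because that factor tends to $1$, so it stays bounded away from $0$ for large $n$ on any fixed compact set. In particular, at the zeros $\xi\in\frac12+\mathbb{Z}$ of $\widehat{\rho}$ the numerator tends to $0$ while the denominator tends to $1$, correctly forcing $\widehat{\delta_{B_n/N_n}}(\xi)\to 0=\widehat{\rho}(\xi)$, so no cancellation pathology arises.
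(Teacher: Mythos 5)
Your proof is correct and follows essentially the same route as the paper's: both rest on the one-step factorization $\widehat{\nu_{>(n-1)}}(\xi)=\widehat{\delta_{B_n/N_n}}(\xi)\,\widehat{\nu_{>n}}(\xi/N_n)$, the uniform-on-compacts convergence $\widehat{\nu_{>n}}(\xi/N_n)\to 1$ coming from equicontinuity and $N_n\to\infty$, and the weak convergence of the tails to $\rho$. The only cosmetic difference is that you isolate $\widehat{\delta_{B_n/N_n}}$ by division (justified, since the denominator tends to $1$), whereas the paper sidesteps division entirely by bounding $|\widehat{\delta_{B_n/N_n}}(\xi)-\widehat{\nu_{>(n-1)}}(\xi)|\le|1-\widehat{\nu_{>n}}(\xi/N_n)|$ via $|\widehat{\delta_{B_n/N_n}}|\le 1$ and adding the two error terms.
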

\begin{proof}
To prove that $\{\delta_{B_{n}/N_{n}}\}$ weakly converges to $\rho$, we need to show that $\{\widehat{\delta_{B_{n}/N_{n}}}\}$ converges uniformly to $\widehat{\rho}$ on any compact subset $K\subset{\mathbb R}$. Note that
$$
\widehat{\nu_{>n}}(\xi) = \widehat{\delta_{B_{N_{n}}/N_{n}}} (\xi) \cdot \widehat{\nu_{>(n+1)}}\left(\frac{\xi}{N_{n}}\right).
$$
Using the equality, one can get that
\begin{eqnarray}\label{delta_weak_converge}
\left|\widehat{\delta_{B_{N_{n}}/N_{n}}} (\xi) - \widehat{\rho}(\xi)\right|
&\le & \left|\widehat{\delta_{B_{N_{n}}/N_{n}}} (\xi) - \widehat{\nu_{>n}}(\xi)\right| + \left|\widehat{\nu_{>n}}(\xi) - \widehat{\rho}(\xi)\right| \\
&\le & \left|1 - \widehat{\nu_{>(n+1)}}\left(\frac{\xi}{N_{n}}\right)\right| + \left|\widehat{\nu_{>n}}(\xi) - \widehat{\rho}(\xi)\right|. \nonumber
\end{eqnarray}
Since $\{\nu_{>n}\}$ weakly converges to $\rho$, for any $\epsilon >0$, there exists $M_1>0$ such that for all $x\in K$, we have
\begin{eqnarray}\label{inequ_1}
\left|\widehat{\nu_{>n}}(\xi) - \widehat{\rho}(\xi)\right| < \epsilon
\end{eqnarray}
whenever $n>M_1$. The equicontinuity of $\{\widehat{\nu_{>(n+1)}}\}$ implies that for the above $\epsilon >0$, there is $\delta >0$ such that $|\widehat{\nu_{>(n+1)}}(x) - \widehat{\nu_{>(n+1)}}(y)| < \epsilon$ whenever $|x-y|<\delta$. Note that $\widehat{\nu_{>(n+1)}}(0) = 1$, we can take $n> M_2$ for some $M_2>0$ so that $|\frac{x}{N_{n}}|<\delta$ for any $x\in K$. Therefore, one can get 
\begin{eqnarray}\label{inequ_2}
\left|\widehat{\nu_{>(n+1)}}\left(\frac{\xi}{N_{n}}\right) - 1\right| < \epsilon
\end{eqnarray}
whenever $n>M_2$. Take $M=\max\{M_1, M_2\}$, if $n>M$, then (\ref{inequ_1}) and (\ref{inequ_2}) hold simultaneously for all $x\in K$. Substituting them into (\ref{delta_weak_converge}), one can get the uniform convergence of $\{\widehat{\delta_{B_{N_{n}}/N_{n}}}\}$ to $\widehat{\rho}$ on $K$. Thus our statement follows.

\end{proof} 
 
 \medskip
 
Not only the discrete measure  weakly converges  to $\rho$, Lemma \ref{lemma_discrete} is also now a necessary and sufficient condition. We first prove the following lemma.

\begin{lemma}\label{lemma_iterate}
Let $k\in{\mathbb Z}$ be written as
$$
k = \ell_1+ N_{n+1}\ell_2+N_{n+1}N_{n+2}\ell_3+...+N_{n+1}N_{n+2}...N_{n+r-1}\ell_r,
$$
where $\ell_t\in \{0, 1, \cdots, N_{n+t-1}-1\}$ for $t=1,...,r,r+1$. Then define the sequence
$$
 \xi_{1} = \frac12,  \ \xi_{t} = \frac{\xi_{t-1}+\ell_{t-1}}{N_{n+(t-1)}},
 $$
we have 
\begin{equation}\label{eq_iterate}
\widehat{\nu_{>n}}(\frac12+k)  =  \left(\prod_{t=1}^{r} \widehat{\delta_{B_{n+t}/N_{n+t}}}(\xi_{t}+\ell_{t})\right)\cdot\widehat{\nu_{>(n+r)}} (\xi_{r+1})  .
    \end{equation}
\end{lemma}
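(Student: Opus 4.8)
The plan is to prove \eqref{eq_iterate} by induction on $r$, unwinding $\widehat{\nu_{>n}}$ one factor at a time. The engine is the one-step factorization: since $\nu_{>m}$ is itself the Cantor--Moran measure generated by the shifted sequence $\{(N_{m+j},B_{m+j})\}_{j\ge1}$, splitting off its leading Dirac factor gives
\begin{equation*}
\widehat{\nu_{>m}}(\xi)=\widehat{\delta_{B_{m+1}/N_{m+1}}}(\xi)\cdot\widehat{\nu_{>(m+1)}}\!\left(\frac{\xi}{N_{m+1}}\right),
\end{equation*}
the same one-step factorization already used in proving Lemma \ref{lemma delta measure convergence}. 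The second ingredient is elementary: $\widehat{\delta_{B_{m+1}/N_{m+1}}}(\xi)=\frac{1}{\#B_{m+1}}\sum_{b\in B_{m+1}}e^{-2\pi i b\xi/N_{m+1}}$ is $N_{m+1}$-periodic, because every $b$ is an integer, so its value depends only on $\xi$ modulo $N_{m+1}$.

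Before iterating I would record the bookkeeping for the mixed-radix form of $k$. Setting $k^{(0)}=k$ and $k^{(t)}=(k^{(t-1)}-\ell_t)/N_{n+t}$ for $t\ge1$, the expansion $k=\ell_1+N_{n+1}\ell_2+\cdots+N_{n+1}\cdots N_{n+r-1}\ell_r$ gives $k^{(t-1)}=\ell_t+N_{n+t}k^{(t)}$ with each $k^{(t)}$ an integer and $k^{(r)}=0$; in particular $k^{(t-1)}\equiv\ell_t \pmod{N_{n+t}}$. I then claim, by induction on $t=0,1,\dots,r$, that
\begin{equation*}
\widehat{\nu_{>n}}\!\left(\tfrac12+k\right)=\left(\prod_{s=1}^{t}\widehat{\delta_{B_{n+s}/N_{n+s}}}(\xi_s+\ell_s)\right)\cdot\widehat{\nu_{>(n+t)}}\!\left(\xi_{t+1}+k^{(t)}\right).
\end{equation*}
The base case $t=0$ is just $\widehat{\nu_{>n}}(\xi_1+k^{(0)})=\widehat{\nu_{>n}}(\tfrac12+k)$. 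For the step, I apply the factorization identity to $\widehat{\nu_{>(n+t-1)}}(\xi_t+k^{(t-1)})$: its leading factor $\widehat{\delta_{B_{n+t}/N_{n+t}}}(\xi_t+k^{(t-1)})$ equals $\widehat{\delta_{B_{n+t}/N_{n+t}}}(\xi_t+\ell_t)$ by $N_{n+t}$-periodicity together with $k^{(t-1)}\equiv\ell_t$, while the surviving tail is $\widehat{\nu_{>(n+t)}}\big(\tfrac{\xi_t+k^{(t-1)}}{N_{n+t}}\big)$, and $\tfrac{\xi_t+k^{(t-1)}}{N_{n+t}}=\tfrac{\xi_t+\ell_t}{N_{n+t}}+k^{(t)}=\xi_{t+1}+k^{(t)}$ by the defining recursion $\xi_{t+1}=\tfrac{\xi_t+\ell_t}{N_{n+t}}$. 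Taking $t=r$ and using $k^{(r)}=0$ collapses the final argument to $\xi_{r+1}$, which is exactly \eqref{eq_iterate}.

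I expect no genuine analytic obstacle here, as the lemma is a purely algebraic unwinding of the recursion. The one point demanding care is keeping the two coupled index shifts synchronized: each application of the factorization simultaneously advances the tail from $\nu_{>(n+t-1)}$ to $\nu_{>(n+t)}$ and rescales its argument by $1/N_{n+t}$, and one must check that the surviving argument always retains the shape $\xi_t+(\text{integer})$ so that the integer part can be folded into the period of the next Dirac factor. That shape is guaranteed by the relations $k^{(t-1)}=\ell_t+N_{n+t}k^{(t)}$; the precise digit ranges for the $\ell_t$ specified in the hypothesis play essentially no role, since the integrality of each $k^{(t)}$ is automatic from the expansion.
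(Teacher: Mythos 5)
Your proof is correct and follows essentially the same route as the paper: the one-step factorization $\widehat{\nu_{>m}}(\xi)=\widehat{\delta_{B_{m+1}/N_{m+1}}}(\xi)\,\widehat{\nu_{>(m+1)}}(\xi/N_{m+1})$ combined with the $N_{m+1}$-periodicity of the Dirac factor, iterated along the mixed-radix expansion of $k$. The explicit $k^{(t)}$ bookkeeping and formal induction merely make precise the iteration that the paper leaves implicit.
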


\begin{proof}
We use the identity $\nu_{>n}(\xi) = \widehat{\delta_{B_{n+1}/N_{n+1}}}(\xi)\cdot \widehat{\nu_{>(n+1)}}(\frac{\xi}{N_{n+1}})$. Note that $\widehat{\delta_{B_{n+1}/N_{n+1}}}$ is $N_{n+1}$-periodic and 
$$
\frac{\frac12+k}{N_{n+1}} = \xi_2+\ell_2+N_{n+2}\ell_3+....+N_{n+2}....N_{n+r-1}\ell_r.
$$
We have
$$
\widehat{\nu_{>n}}(\frac12+k) = \widehat{\delta_{B_{n+1}/N_{n+1}}}(\xi_{1}+\ell_{1}) \cdot \widehat{\nu_{>(n+1)}}(\xi_2+\ell_2+N_{n+2}\ell_3+....+N_{n+2}....N_{n+r-1}\ell_r).
$$
We iterate the formula and note that $\widehat{\delta_{B_{n+t}/N_{n+t}}}$ is $N_{n+t}$-periodic.  (\ref{eq_iterate}) follows.
\end{proof}

 \medskip

 \begin{theorem}\label{theorem_6.3}
 Suppose that $\{\nu_{>n}\}$ converges weakly to $\rho$. The following are equivalent.
 \begin{enumerate}
     \item there exist $\epsilon_0>0$, $\delta_0>0$ and  a subsequence $\{n_j\}$ such that for all $n_j$, we can find integer $k_{j}$ such that $\frac{k_{j}}{N_{n_j}}\in[0,1]\setminus\left(\frac12-\delta_0,\frac12+\delta_0\right)$ and 
$$
\left|\widehat{\delta_{B_{n_j}/N_{n_j}}}(\frac12+k_{j})\right|\ge \epsilon_0.
$$
\item there exist $\epsilon_0>0$ and a subsequence $\{\nu_{>n_j}\}$ such that  for any $j\ge 1$, we can find integer $k_{j}$ such that
    $$\left|\widehat{\nu_{>n_j}}(\frac12+k_{j})\right|\ge \epsilon_0.$$
 \end{enumerate}
 \end{theorem}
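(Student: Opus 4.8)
The two conditions sit on the two sides of the single-step factorization $\widehat{\nu_{>(n-1)}}(\xi)=\widehat{\delta_{B_n/N_n}}(\xi)\,\widehat{\nu_{>n}}(\xi/N_n)$, so the plan is to push this identity (and its iterate, Lemma~\ref{lemma_iterate}) in both directions, using the equicontinuity of $\{\widehat{\delta_{B_n/N_n}}\}$ and $\{\widehat{\nu_{>n}}\}$ from Lemma~\ref{equicontinuity} together with the uniform convergence $\widehat{\nu_{>n}}\to\widehat{\rho}$ on $[0,1]$ supplied by weak convergence (Lemma~\ref{lem_weak_equivalence}(v)). For the implication (i)$\Rightarrow$(ii) there is essentially nothing new to do: condition (i) is exactly the hypothesis of Lemma~\ref{lemma_discrete}, which yields an equi-positive subsequence of $\{\nu_{>n}\}$, and by the equivalence of parts (i) and (iii) of Proposition~\ref{prop_equiv-positive} (applied with $\nu_n:=\nu_{>n}$) this is the same as condition (ii). One may instead reproduce the single line inside Lemma~\ref{lemma_discrete}: factor $\widehat{\nu_{>(n_j-1)}}(\frac12+k_j)=\widehat{\delta_{B_{n_j}/N_{n_j}}}(\frac12+k_j)\,\widehat{\nu_{>n_j}}\big(\frac{\frac12+k_j}{N_{n_j}}\big)$, note that the second argument stays away from $\frac12$, and bound $\widehat{\nu_{>n_j}}$ below there by uniform convergence.

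The substance is (ii)$\Rightarrow$(i), which I would handle by a digit-peeling scheme. Given $|\widehat{\nu_{>n_j}}(\frac12+k_j)|\ge\epsilon_0$ (after reducing to $k_j\ge 0$ via $|\widehat{\nu}(\xi)|=|\widehat{\nu}(-\xi)|$), I expand $\frac12+k_j$ through Lemma~\ref{lemma_iterate} as a product of delta-factors $\widehat{\delta_{B_{n_j+t}/N_{n_j+t}}}(\xi_t+\ell_t)$, $t=1,\dots,r$, times a tail factor $\widehat{\nu_{>(n_j+r)}}(\xi_{r+1})$, where $\xi_1=\frac12$ and $\xi_{t+1}=(\xi_t+\ell_t)/N_{n_j+t}$. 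Since each factor has modulus at most $1$ while their product has modulus at least $\epsilon_0$, every individual factor has modulus at least $\epsilon_0$. I then fix a small $\delta'>0$ (small enough that equicontinuity of $\{\widehat{\delta_{B_n/N_n}}\}$ moves values by less than $\epsilon_0/2$ over intervals of length $\delta'$, and that $|\widehat{\rho}|<\epsilon_0/4$ on $(\frac12-\delta',\frac12+\delta')$), set $\delta_0=\delta'/2$, and pass to $j$ so large that $N_n$ exceeds a chosen threshold $C$ for all $n\ge n_j$ (possible since $N_n\to\infty$). Each digit is then called \emph{good} if $\ell_t/N_{n_j+t}\in[0,1]\setminus(\frac12-\delta_0,\frac12+\delta_0)$ and \emph{bad} otherwise.

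The engine of the proof is a running invariant: as long as every digit seen so far is bad, $|\xi_t-\frac12|\le\delta'$. This follows from $\xi_{t+1}-\frac12=\xi_t/N_{n_j+t}+(\ell_t/N_{n_j+t}-\frac12)$, where the first summand is at most $(\frac12+\delta')/C$ and the second is $<\delta_0$ for a bad digit, so the bound $\delta'$ is preserved once $C$ is large. Next I show a good digit must occur before the tail is reached: if all digits were bad, the invariant would force $\xi_{r+1}\in(\frac12-\delta',\frac12+\delta')$, and then uniform convergence would give $|\widehat{\nu_{>(n_j+r)}}(\xi_{r+1})|\le|\widehat{\rho}(\xi_{r+1})|+\epsilon_0/2<3\epsilon_0/4<\epsilon_0$, contradicting that the tail factor is $\ge\epsilon_0$. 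Letting $t_j$ be the first good digit, the invariant yields $|\xi_{t_j}-\frac12|\le\delta'$, so equicontinuity upgrades $|\widehat{\delta_{B_{n_j+t_j}/N_{n_j+t_j}}}(\xi_{t_j}+\ell_{t_j})|\ge\epsilon_0$ to $|\widehat{\delta_{B_{n_j+t_j}/N_{n_j+t_j}}}(\frac12+\ell_{t_j})|\ge\epsilon_0/2$, while $\ell_{t_j}/N_{n_j+t_j}$ lies away from $\frac12$ by construction. Setting $m_j=n_j+t_j$ (these tend to infinity since $m_j\ge n_j+1$, so an increasing subsequence can be extracted) and $k=\ell_{t_j}$ produces condition (i) with constants $\epsilon_0/2$ and $\delta_0$.

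The step I expect to demand the most care is precisely this guarantee that a good digit appears before the tail factor, and the bookkeeping behind it. The obstruction is that the errors $\xi_t-\frac12$ do not decay along the iteration; they only stay of size $O(\delta_0)$, so maintaining the invariant uniformly in $t$ is what forces the passage to large $j$ (making all relevant $N_n\ge C$) and ties the argument to the contradiction with weak convergence at the tail. Getting the three small parameters $\epsilon_0$, $\delta_0$ and $\delta'$ to cooperate—so that the equicontinuity loss, the invariant recursion, and the smallness of $\widehat{\rho}$ near $\frac12$ all hold simultaneously—is the delicate accounting, but no step beyond it should be more than routine.
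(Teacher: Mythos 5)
Your proposal is correct and follows essentially the same route as the paper: (i)$\Rightarrow$(ii) via Lemma \ref{lemma_discrete}, and (ii)$\Rightarrow$(i) by expanding $k_j$ through Lemma \ref{lemma_iterate}, noting every factor in the product must be at least $\epsilon_0$, showing that if all digits stayed near $\tfrac12$ the recursion would force $\xi_{r+1}$ near $\tfrac12$ and contradict the weak convergence to $\rho$ at the tail, and then applying equicontinuity at the first digit away from $\tfrac12$. Your "running invariant'' is just a repackaging of the paper's telescoping estimate \eqref{eq5.04}, and your tail contradiction (uniform convergence to $\widehat{\rho}$ plus smallness of $|\widehat{\rho}|$ near $\tfrac12$) is equivalent to the paper's use of equicontinuity together with $\widehat{\nu_{>(n_j+r_j)}}(\tfrac12)\to 0$.
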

 \begin{proof}
 ((i)$\Rightarrow$(ii)) follows from Lemma \ref{lemma_discrete}. We now suppose (ii) holds. Then we can find $k_{j}$ such that 
 \begin{eqnarray}\label{inequality_epsilon}
\left|\widehat{\nu_{>n_j}}(\frac12+k_{j})\right|\ge \epsilon_0. 
 \end{eqnarray}
As we know that $|\overline{\widehat{\nu}(\xi)}| = |\widehat{\nu}(-\xi)|$, there is no loss of generality to assume $k_{j}\geq0$. Then we can write $k_{j}$ as follows:
 $$
 k_{j} = \ell_{1,j}+ N_{n_j+1}\ell_{2,j}+N_{n_j+1}N_{n_j+2}\ell_{3,j}+...+N_{n_j+1}N_{n_j+2}..N_{n_j+r_j}\ell_{r_j+1,j},
 $$
 where $\ell_{t,j}\in\{0,1,..., N_{n_j+t-1}-1\}$ for all $t=1,...,r_j,r_j+1$. Define
 $$
 \xi_{1,j} = \frac12,  \ \xi_{t,j} = \frac{\xi_{t-1,j}+\ell_{t-1,j}}{N_{n_j+(t-1)}}.
 $$
By Lemma \ref{lemma_iterate}, we have 
 $$
\begin{aligned}
\epsilon_0\le \left|\widehat{\nu_{>n_j}}(\frac12+k_j)\right| & = & \left(\prod_{t=1}^{r_j} |\widehat{\delta_{B_{n_j+t}/N_{n_j+t}}}(\xi_{t,j}+\ell_{t,j})|\right)\cdot \left|\widehat{\nu_{>(n_j+r_j)}} (\xi_{r_{j}+1,j})  \right|.
\end{aligned} 
 $$
 As all Fourier transforms are less than 1 in modulus, this implies that for all $t=1,...,r_j$,
 \begin{equation}\label{eq5.03}
\left|\widehat{\delta_{B_{n_j+t}/N_{n_j+t}}}(\xi_{t,j}+\ell_{t,j})\right|\ge \epsilon_0 \ \mbox{and} \  \left|\widehat{\nu_{>(n_j+r_j)}} (\xi_{r_{j}+1,j})  \right|\ge \epsilon_0.
      \end{equation}
By the equicontinuity of the family of measures  in $\widehat{{\mathcal P}}[0,1]$, we can always find a $\delta_0>0$ independent of the measure such that whenever $|x-y|\le \delta_0$, for all $\nu\in {\mathcal P}[0,1]$,
$$
\left|\widehat{\nu}(x)-\widehat{\nu}(y)\right|<\frac{\epsilon_0}{2}.
$$
In particular, it holds for $\delta_{B_{n_j+t}/N_{n_j+t}}$ and $\nu_{>(n_j+r_j)}$.


\vspace{0.2cm}

\noindent {\bf{Claim:}} There exist infinitely many $j$  such that  we can find $t\in\{1,...,r_{j},r_{j+1}\}$ satisfying $$\frac{\ell_{t,j}}{N_{n_j+t}}\not\in\left(\frac12-\frac{\delta_0}{2},\frac12+\frac{\delta_0}{2}\right).$$

\medskip

Suppose the claim is false. Then for all $j$ large enough and any $t\in\{1,...,r_{j},r_{j+1}\}$,
$$
\frac{\ell_{t,j}}{N_{n_j+t}}\in\left(\frac12-\frac{\delta_0}{2},\frac12+\frac{\delta_0}{2}\right).
$$
We now estimate the distance between $1/2$ and $\xi_{t,j}$ for all $t=2,...,r_j+1,$
\begin{equation}\label{eq5.04}
\begin{aligned}
\left|\xi_{t,j}-\frac12\right| = &\left|\frac{\xi_{t-1,j}}{N_{n_j+t-1}}+\frac{\ell_{t-1,j}}{N_{n_j+t-1}}-\frac12\right|\\
\le& \frac{\left|\xi_{t-1,j}-\frac12\right|}{N_{n_j+t-1}} +\frac{1}{2N_{n_j+t-1}} +\frac{\delta_0}{2}\\
\le&\frac{\left|\xi_{t-2,j}-\frac12\right|}{N_{n_j+t-2}N_{n_j+t-1}} +\frac12\left(\frac{1}{N_{n_j+t-2}N_{n_j+t-1}} +\frac{1}{N_{n_j+t-1}}\right)+\frac{\delta_0}{2}\left(1+\frac{1}{N_{n_j+t-1}}\right)\\
\le &  \ \ \vdots\\
\le &\frac12\left(\frac{1}{N_{n_j+1}...N_{n_j+t-2}N_{n_j+t-1}}+...+\frac{1}{N_{n_j+t-2}N_{n_j+t-1}}+\frac{1}{N_{n_j+t-1}}\right) \\
& \ \ +\frac{\delta_0}{2}\left(1+\frac{1}{N_{n_j+t-1}}+...+\frac{1}{N_{n_j+t-1}...N_{n_j+1}}\right)\\
\le & \frac12\cdot \frac{2}{N_{n_j+t-1}} +\frac{\delta_0}{2}\left(1+\frac2{N_{n_j+t-1}}\right).
\end{aligned}
\end{equation}
Recall that in this section, we have $\lim_{n\to\infty}N_n=\infty$. The above inequality tells us that when $j$ is large enough, we can have $\left|\xi_{t,j}-\frac12\right|<\delta_0$.
Hence, $\xi_{r_j+1,j}\in (1/2-\delta_0,1/2+\delta_0)$. By the equicontinuity,
$$
\epsilon_0\le\left|\widehat{\nu_{>(n_j+r_j)}} (\xi_{r_{j+1},j})  \right|\le \left|\widehat{\nu_{>(n_j+r_j)}} (\frac12)  \right|+\epsilon_0/2.
$$
But $\{\nu_{>n}\}$ weakly converges to $\rho$, $\{\widehat{\nu_{>(n_j+r_j)}} (\frac12)  \}$ converges to $0$ as $j$ goes to infinity. This leads to a contradiction to (\ref{eq5.03}) when $j$ is large. This justifies the claim.

\medskip

Having the claim,  we then  take $t$ to be the first integer such that the claim holds and denote it by $t_j$. i.e.
$$
\frac{\ell_{t_j,j}}{N_{n_j+t_j}}\not\in\left(\frac12-\frac{\delta_0}{2},\frac12+\frac{\delta_0}{2}\right).
$$
We also know that $|\xi_{t_j,j}-\frac12|<\delta_0$ using the same estimation argument in (\ref{eq5.04}). Hence, equicontinuity implies that $|\widehat{\delta_{B_{n_j+t_j}/N_{n_j+t_j}}}(1/2+\ell_{t_j,j})|\ge \epsilon_0/2$. (i) follows by taking $\epsilon_0/2$, $\delta_0$ in the equicontinuity and the subsequence $n_j+t_j$. This completes the proof.

\end{proof}

 

 \medskip
 
 Because of the Lemma \ref{lemma delta measure convergence} and the equivalent conditions of weak convergence in Lemma \ref{lem_weak_equivalence}, for all $\epsilon>0$ and $\delta>0$, we can find $n_0$ such that whenever $n\ge n_0$, 
 $$
  \frac{\#\{b/N_n\in B_n/N_n: b/N_n\in(\delta,1-\delta)\}}{\#B_n}= \delta_{B_n/N_n}(\delta,1-\delta)< \epsilon.
 $$
 Let $B_{n,\delta} = B_n\setminus\{b\in B_n: b/N_n\in(\delta,1-\delta)\}$. Whenever $n\ge n_0$, we have
 \begin{equation}\label{eq5.5}
  \widehat{\delta_{B_n/N_n}}(\xi) = \frac{1}{\#B_n} \sum_{b\in B_{n,\delta}}e^{-2\pi i b\xi/N_n}+\frac{1}{\#B_n} \sum_{b\in B_n\setminus B_{n,\delta}}e^{-2\pi i b\xi/N_n}.  
 \end{equation}
 But 
 $$
\left| \frac{1}{\#B_n} \sum_{b\in B_n\setminus B_{n,\delta}}e^{-2\pi i b\xi/N_n}\right|\le  \frac{\#\{b/N_n: b/N_n\in(\delta,1-\delta)\}}{\#B_n} <\epsilon.
 $$
 By (\ref{eq5.5}), we have
 \begin{equation}\label{eq_6.8+}
\left| \widehat{\delta_{B_n/N_n}}(\xi)\right| \ge \left|\frac{1}{\#B_n} \sum_{b\in B_{n,\delta}}e^{-2\pi i b\xi/N_n}\right| -\epsilon.
\end{equation}
Let 
$$
B_{n,\delta,0} = B_{n,\delta}\cap [0,N_n\delta], \   \ B_{n,\delta,1} = N_n- (B_{n,\delta}\cap N_n[1-\delta,1)).$$
We have now
$$
\frac{1}{\#B_{n}} \sum_{b\in B_{n,\delta, 1}}e^{-2\pi i (N_n-b)\xi/N_n} = \frac{1}{\#B_{n}} \sum_{b\in B_{n,\delta,1}}e^{-2\pi i (1-\frac{b}{N_n})\xi},
$$
and
$$
\begin{aligned}
\frac{1}{\#B_n} \sum_{b\in B_{n,\delta}}e^{-2\pi i \frac{b}{N_n}\xi} 
= & \frac{1}{\#B_{n}}\left( \sum_{b\in B_{n,\delta,0}}e^{-2\pi i \frac{b}{N_n}\xi}
+e^{-2\pi i \xi}\sum_{b\in B_{n,\delta,1}}e^{2\pi i \frac{b}{N_n}\xi}\right).
\end{aligned}
$$
 Putting $\xi = \frac12+k$, we have 
\begin{equation}\label{eq_6.9}
\begin{aligned}
\left|  \sum_{b\in B_{n,\delta}}e^{-2\pi i \frac{b}{N_n}(\frac12+k)}\right|^2 =& \left|  \sum_{b\in B_{n,\delta,0}}e^{-2\pi i \left(\frac{b}{2N_n}+\frac{b}{N_n}k\right)}-\sum_{b\in B_{n,\delta,1}}e^{2\pi i \left(\frac{b}{2N_n}+\frac{b}{N_n}k\right)}\right|^2\\
=&\left|\sum_{b\in B_{n,\delta,0}}\cos 2\pi\left(\frac{b}{2N_n}+\frac{b}{N_n}k\right)-\sum_{b\in B_{n,\delta,1}}\cos 2\pi\left(\frac{b}{2N_n}+\frac{b}{N_n}k\right)\right|^2\\
& \ +\left|\left(\sum_{b\in B_{n,\delta,0}} +\sum_{b\in B_{n,\delta,1}}\right)\sin 2\pi\left(\frac{b}{2N_n}+\frac{b}{N_n}k\right)\right|^2.
 \end{aligned}
\end{equation}
We say that $(N_n,B_n)$ is {\it symmetric} if for all $\delta>0$, we can find some $n_0$ such that for all $n\ge n_0$, $B_n = B_{n,\delta}$ and  $B_{n,\delta,1} = B_{n,\delta,0}\setminus\{0\}$. From the above derivation, we have the following lemma. 

\begin{lemma}\label{lemma_estimate}
Suppose that $\{\nu_{>n}\}$ converges weakly to $\rho$ and $\lim_{n\rightarrow\infty}\#B_n=\infty$. Then for all $\epsilon>0$ and for all $\delta>0$, we can find $n_0$ such that for all $n\ge n_0$, we have
\begin{equation}\label{eq_6.10}
\left|\widehat{\delta_{B_n/N_n}}(\frac12+k)\right| \ge \left|\frac{1}{\#B_{n}}\sum_{b\in B_{n,\delta,0}\cup B_{n,\delta,1}} m_b \sin 2\pi\left(\frac{b}{2N_n}+\frac{b}{N_n}k\right)\right|-\epsilon,
 \end{equation}
 where $m_b = 2$ if $b\in B_{n,\delta,0}\cap B_{n,\delta,1}$ and $m_b = 1$ otherwise. Furthermore,  suppose that $(N_n,B_n)$ is symmetric. Then for all $\delta>0$, we can find $n_0$ such that whenever $n\ge n_0$,
\begin{eqnarray}\label{eq_symmetric}
 \left|\widehat{\delta_{B_n/N_n}}(\frac12+k)\right|^2 =\frac{1}{(\#B_n)^2} +\left|\frac{2}{\#B_{n}}\sum_{b\in B_{n,\delta,0}}\sin 2\pi\left(\frac{b}{2N_n}+\frac{b}{N_n}k\right)\right|^2. 
\end{eqnarray}
\end{lemma}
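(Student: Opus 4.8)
The plan is to read off both displays directly from the identities already assembled in the paragraph preceding the lemma, specialized to the point $\xi = \frac12+k$. First I would record the one arithmetic simplification that drives everything: since $e^{-2\pi i(\frac12+k)} = e^{-\pi i} = -1$, the reflection factor $e^{-2\pi i\xi}$ that appears when $\sum_{b\in B_{n,\delta}}e^{-2\pi i b\xi/N_n}$ is split into its near-$0$ part (indexed by $B_{n,\delta,0}$) and its reflected near-$N_n$ part (indexed by $B_{n,\delta,1}$) becomes exactly $-1$. Writing $\theta_b = 2\pi\left(\frac{b}{2N_n}+\frac{b}{N_n}k\right)$ for brevity, this is precisely what turns the decomposition into the first line of (\ref{eq_6.9}), in which the $B_{n,\delta,1}$-sum enters with a minus sign and with the conjugate exponential $e^{+2\pi i\theta_b/2\pi}$; separating real and imaginary parts then yields the two squared terms of (\ref{eq_6.9}).

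For (\ref{eq_6.10}), I would start from the lower bound (\ref{eq_6.8+}), which holds for all $n\ge n_0$ once $n_0$ is chosen, via Lemma \ref{lemma delta measure convergence} and the weak-convergence characterization in Lemma \ref{lem_weak_equivalence}, so that the middle mass $\delta_{B_n/N_n}(\delta,1-\delta)$ is below $\epsilon$. Expressing the modulus squared as in (\ref{eq_6.9}) and discarding the non-negative cosine-difference square leaves the imaginary part $\left|\left(\sum_{b\in B_{n,\delta,0}}+\sum_{b\in B_{n,\delta,1}}\right)\sin\theta_b\right|^2$. Collapsing these two sums into a single sum over $B_{n,\delta,0}\cup B_{n,\delta,1}$ introduces exactly the multiplicity $m_b = 2$ for indices in the overlap $B_{n,\delta,0}\cap B_{n,\delta,1}$ and $m_b = 1$ otherwise; taking square roots and substituting back into (\ref{eq_6.8+}) produces (\ref{eq_6.10}).

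For (\ref{eq_symmetric}), I would invoke the symmetry hypotheses $B_n = B_{n,\delta}$ and $B_{n,\delta,1} = B_{n,\delta,0}\setminus\{0\}$, valid for $n\ge n_0$. The first equality removes the middle part, so (\ref{eq_6.8+}) becomes an exact identity with $\epsilon = 0$ and $\left|\widehat{\delta_{B_n/N_n}}(\frac12+k)\right|^2$ equals $(\#B_n)^{-2}$ times the right-hand side of (\ref{eq_6.9}). The second equality, together with $\theta_0 = 2\pi\left(\frac{0}{2N_n}+\frac{0}{N_n}k\right) = 0$, forces the cosine-difference to collapse: since $B_{n,\delta,0} = \{0\}\cup B_{n,\delta,1}$ with $0\notin B_{n,\delta,1}$, every nonzero index cancels between $\sum_{b\in B_{n,\delta,0}}\cos\theta_b$ and $\sum_{b\in B_{n,\delta,1}}\cos\theta_b$, leaving only the $b=0$ term $\cos 0 = 1$; hence the cosine square equals exactly $1$, giving the $(\#B_n)^{-2}$ summand. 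In the sine term, $\sin\theta_0 = 0$ lets me replace $B_{n,\delta,1}$ by $B_{n,\delta,0}$, so $\sum_{b\in B_{n,\delta,0}}+\sum_{b\in B_{n,\delta,1}} = 2\sum_{b\in B_{n,\delta,0}}$, producing the second summand.

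No single step is a genuine obstacle: the lemma is bookkeeping on top of the identity (\ref{eq_6.9}). The only two places demanding care are (a) tracking the multiplicity $m_b$ correctly when an index lies in both $B_{n,\delta,0}$ and $B_{n,\delta,1}$, so that the merged sine sum is weighted rather than silently double-counted; and (b) verifying in the symmetric case that it is precisely the singleton $\{0\}$ by which $B_{n,\delta,0}$ and $B_{n,\delta,1}$ differ that makes the cosine sum exactly $1$ rather than merely bounded. Both become immediate once $\theta_0 = 0$ and $\sin\theta_0 = 0$ are noted.
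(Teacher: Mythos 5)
Your proposal is correct and follows essentially the same route as the paper: both derive (\ref{eq_6.10}) from (\ref{eq_6.8+}) and (\ref{eq_6.9}) by discarding the cosine-difference square and merging the two sine sums with the multiplicity $m_b$, and both obtain (\ref{eq_symmetric}) by noting that $B_n=B_{n,\delta}$ kills the $\epsilon$ loss while $B_{n,\delta,1}=B_{n,\delta,0}\setminus\{0\}$ cancels the cosines except at $b=0$ and doubles the sines. Your explicit remarks that $\theta_0=0$ and $\sin\theta_0=0$ only make the bookkeeping more transparent than the paper's terse version.
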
 
 
\begin{proof}
(\ref{eq_6.10}) follows directly from (\ref{eq_6.8+}) and (\ref{eq_6.9}) by dropping the first term of the cosine in (\ref{eq_6.9}). For (\ref{eq_symmetric}), we note that since $B_{n} = B_{n,\delta}$. There is no $\epsilon$ loss in (\ref{eq_6.8+}). Finally, as $B_{n,\delta,1} = B_{n, \delta,0}\setminus\{0\}$, the cosine term in (\ref{eq_6.9}) are cancelled out except the case $b = 0$, which equals $1$. Also, $m_b = 2$ for all $b\ne 0$ because of the symmetry. This shows $(\ref{eq_symmetric})$.
\end{proof}

\vspace{0.2cm}

\section{Non-admissible Cantor-Moran measures (III): A sum of sine problem}
\subsection{Symmetric case.} 
In this section, we will prove Theorem \ref{maintheorem3} and Theorem \ref{maintheorem4}. For any real number $\xi$, we denote $\langle\xi\rangle$ to be the unique number suth that $|\langle\xi\rangle|\le\frac12$ and $[\xi] :=\xi-\langle\xi\rangle\in{\mathbb Z}$, which denotes the integer part of $\xi$.

\medskip

\noindent{\bf Proof of Theorem \ref{maintheorem3}.} It suffices to show that the first statement of Theorem \ref{theorem_6.3} holds.
Given $\epsilon_0$ and $\delta_0$ in the sum of sine problem which holds for $\{B_{n_j,\delta,0}\}$. We take $0<\delta< \frac{\epsilon_0}{4\pi}$. Consider $n_j$ large enough so that  $1/N_{n_j}<\delta/2$ and (\ref{eq_symmetric}) holds. From the sum of sine problem, we can find $x_j\in[0,1/2-\delta_0]$ such that 
$$
\left|\frac{2}{\#B_{n_j}}\sum_{b\in B_{n_j,\delta,0}}\sin \left(2\pi b x_j\right)\right|\ge \epsilon_0.
$$
We now take $k_j = [N_{n_j} x_j]$. Let $\xi_b = \frac{b}{2N_{n_j}} - \frac{b\langle N_{n_j}x_j\rangle}{N_{n_j}}$. From (\ref{eq_symmetric}),  we have
\begin{equation}\label{eq7.1}
\begin{aligned}
\left|\widehat{\delta_{B_{n_j}/N_{n_j}}}(\frac12+k_j)\right|\ge \left|\frac{2}{\#B_{n_j}}\sum_{b\in B_{n_j,\delta,0}}\sin 2\pi bx_j\right|- \frac{2}{\#B_n} \sum_{b\in B_{n_j,\delta,0}} \left|\sin 2\pi\left(\xi_b+bx_j\right)-\sin 2\pi bx_j \right|.
\end{aligned}
\end{equation}
By the standard sum-to-product trigonometric identity,
\begin{equation}\label{eq_trig}
    \sin 2\pi\left(\xi_b+bx_j\right)-\sin 2\pi bx_j = 2\sin\left(\pi\xi_b\right)\cos\left(\pi (\xi_b+2b x_j)\right).
\end{equation}
 Note that $|\frac{b}{N_{n_j}}|\le \delta$ and $|\langle N_{n_j}x_j\rangle|\le 1/2$, we have $|\xi_b|\le \delta$. This implies that 
$$
\frac{2}{\#B_{n_j}} \sum_{b\in B_{n_j,\delta,0}} \left|\sin 2\pi\left(\xi_b+bx_j\right)-\sin 2\pi bx_j \right|\le \frac{4\pi }{\#B_{n_j}} \sum_{b\in B_{n_j,\delta,0}}|\xi_b| \le 2\pi \delta.
$$
Combining it with (\ref{eq7.1}), we obtain that 
$$
\left|\widehat{\delta_{B_{n_j}/N_{n_j}}}(\frac12+k_j)\right|\ge\epsilon_0-2\pi \delta \ge \epsilon_0/2.
$$
As $x_j\in[0,\frac12-\delta_0]$, we have $k_j/N_{n_j}<1/2-\delta_0$. This shows that the first statement of Theorem \ref{theorem_6.3} holds, as desired.
\eproof

\bigskip

We now use the Bourgain's example  of small sum of sine to prove Theorem \ref{maintheorem4}. Let us recall the precise statement below.

\bigskip

\begin{theorem}
Given a positive integer $n$, one can choose a set of positive integers $B_{n}' = \{b_1<b_2<...<b_n\}$ such that 
$$
\max_{x\in[0,1]}\left|\sum_{b\in B_n'}\sin (2\pi b x)\right|\le Cn^{2/3}
$$
where $C$ is an absolute constant.
\end{theorem}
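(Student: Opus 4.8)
The plan is to construct the set $B_n'$ by a random selection, following the Salem--Zygmund paradigm of ``random trigonometric sum plus discretization plus union bound,'' while reserving special care for the regions where the sines add coherently. Fix a range parameter $N$ (to be taken of size $N\sim n^{4/3}$) and include each integer $b\in\{1,\dots,N\}$ in $B$ independently with probability $p=n/N\sim n^{-1/3}$; a Chernoff bound shows that $\#B$ concentrates around $n$, so after a harmless pruning we may assume $\#B=n$. Writing $S(x)=\sum_{b=1}^{N}\xi_b\sin(2\pi bx)$ for the associated random sum, the first step is to discretize: since $|S'(x)|\le 2\pi\sum_{b\in B}b\le 2\pi Nn$, it suffices to control $|S|$ on an equally spaced grid of $O(n^{5/3})$ points, the error incurred between grid points being at most a constant multiple of $n^{2/3}$. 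Invoking Bernstein's inequality in place of this crude derivative bound would shrink the grid to $O(N)$ points, but that refinement is inessential.

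The second step is a pointwise estimate at each grid point $x$, separating mean from fluctuation. The mean is $\mathbb{E}S(x)=p\,D_N(x)$ with $D_N(x)=\sum_{b=1}^{N}\sin(2\pi bx)$, and the classical bound $|D_N(x)|\le 1/(2\|x\|)$, where $\|x\|$ denotes the distance from $x$ to $\mathbb{Z}$, gives $|\mathbb{E}S(x)|\le p/(2\|x\|)\le n^{2/3}$ as soon as $\|x\|\ge 1/(2n)$. For the fluctuation, the summands $\xi_b\sin(2\pi bx)$ are independent and bounded with variance proxy $\sum_b p\sin^2(2\pi bx)\le pN=n$, so a Bernstein--Bennett inequality gives $|S(x)-\mathbb{E}S(x)|\lesssim\sqrt{n\log n}$ outside an event of probability $o(1/(\#\text{grid}))$. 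Since $\sqrt{n\log n}\ll n^{2/3}$, a union bound over the grid shows that, with high probability, $|S(x)|\le Cn^{2/3}$ simultaneously at every grid point with $\|x\|\ge 1/(2n)$; combined with the discretization this controls $\|S\|_\infty$ away from a shrinking neighborhood of the integers.

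The delicate point, and the main obstacle, is the \emph{coherent} window $\|x\|<1/(2n)$, together with the major arcs around rationals $a/q$ of small denominator. There the mean $\mathbb{E}S(x)=pD_N(x)$ can be as large as $pN\sim n$, so the typical random set is useless; and on the innermost window $0<x<1/(2N)$ every term $\sin(2\pi bx)$ is positive, so there is no sign cancellation to exploit and $S(x)$ can be made small only by controlling the actual magnitudes, that is, the arithmetic distribution of the chosen frequencies. This forces one to replace the purely i.i.d.\ model by a structured selection---for instance one making $B$ approximately equidistributed modulo every small $q$, so that $\sum_{b\in B}\sin(2\pi b a/q)\approx 0$ on each major arc---and then to expand $S$ to first order around each $a/q$ and balance the length of the arc against the growth of the linearized sum. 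Carrying out this balance is precisely where the exponent $2/3$ is born: optimizing the frequency range $N$ against the coherent-window contribution pins $N\sim n^{4/3}$ and yields $\max_{x}|S(x)|\le Cn^{2/3}$ with positive probability, which furnishes the desired set $B_n'$.
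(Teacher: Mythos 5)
The paper does not actually prove this statement: it is quoted from Bourgain \cite{B1984} (with Kahane \cite{K1985} cited for a written proof), so there is no internal argument to compare yours against. Judged on its own terms, your proposal is not yet a proof — it carries out only the routine half of the argument and explicitly defers the half that contains the entire content of the theorem.

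Concretely, the single-scale i.i.d.\ model you set up (each $b\in\{1,\dots,N\}$ selected with probability $p=n/N$, $N\sim n^{4/3}$) provably fails, so the issue is not one of ``pruning'' or conditioning on a good event. For $x\in\left(0,\frac{1}{2N}\right)$ every summand $\sin(2\pi bx)$ is nonnegative, and $\sin\theta\ge \frac{2}{\pi}\theta$ on $[0,\pi/2]$ gives
$$
\sum_{b\in B}\sin(2\pi bx)\ \ge\ 4x\sum_{b\in B}b .
$$
A typical realization of your random set has $\sum_{b\in B}b\approx nN/2$, so at $x$ just below $\frac{1}{2N}$ the sum is $\gtrsim n$, not $O(n^{2/3})$ --- consistent with your own observation that $\mathbb{E}S(x)=pD_N(x)$ reaches size $pN=n$ there while the fluctuation is only $O(\sqrt{n\log n})$. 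Your proposed repair (making $B$ equidistributed modulo small $q$) does not touch this obstruction: near $x=0$ the offending quantity is the linear term $x\sum_{b\in B}b$, which is blind to residue classes. What is actually required --- and what Bourgain's construction supplies --- is a structured, multi-scale random selection (frequencies grouped into blocks at geometrically separated scales, with block sizes chosen so that at every $x$ the coherent contribution of the at most one ``resonant'' block and of all lower-frequency blocks stays $O(n^{2/3})$, while the higher-frequency blocks contribute only square-root cancellation uniformly over a grid); balancing the per-block coherent contribution against the uniform fluctuation bound is precisely where the exponent $2/3$ emerges. Your write-up names this step (``carrying out this balance is precisely where the exponent $2/3$ is born'') but does not perform it, so what is actually established is only the estimate away from the coherent windows, i.e., the part of the theorem that is standard Salem--Zygmund theory and does not by itself yield any nontrivial set $B_n'$.
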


\medskip

\noindent{\bf Proof of Theorem \ref{maintheorem4}.} (i) Using the Bourgain's example. For all positive integer $n$, we can find some finite sets of integers $B_{n}'$ of cardinality $n$ such that 
$$
\max_{x\in\left[0,1\right]}\left|\frac{1}{n}\sum_{b\in B_{n}'} \sin (2\pi b x)\right| \le C\frac{n^{2/3}}{n}  = \frac{C}{n^{1/3}}.
$$
 We take  $N_n > (\max B_n') 2^n$  and  $B_n = B_n'\cup (N_n- B_n'\setminus\{0\})$. Then $(N_n,B_n)$ is symmetric. We now consider the Cantor-Moran measure $\mu = \mu(N_n,B_n)$. Note that by our choice of $N_n$,  for any $\eta>0$, the Dirac measure $\delta_{B_n/N_n}(\eta,1-\eta) =0$ for all $n$ large.  This also implies that the associated measure $\nu_{>n}$ has no support in $(\eta,1-\eta)$ as long as $n$ is sufficiently large. Hence, $\{\nu_{>n}\}$ converges weakly to $\rho$. Finally, $\lim_{n\rightarrow\infty}\#B_n =\infty$  by our construction. Our proof will be complete if we can show that statement (i) in Theorem \ref{theorem_6.3} does not hold. 
 
 \bigskip
 It suffices to show that 
 \begin{equation}\label{eq_lim7.1}
 \lim_{n\rightarrow\infty}\max_{k/N_n\in[0,1]}\left|\widehat{\delta_{B_n/N_n}}(\frac12+k)\right| = 0.
      \end{equation}
 To see this, we let $x = k/N_n$. Note that 
 $$
 \begin{aligned}
 \left|\frac{2}{\#B_{n}'}\sum_{b\in B_{n}'}\sin 2\pi\left(\frac{b}{2N_n}+\frac{b}{N_n}k\right)\right| \le &  \left|\frac{2}{\#B_{n}'}\sum_{b\in B_{n}'}\sin (2\pi bx)\right| +\frac{2}{\#B_{n}}\sum_{b\in B_{n}'}\left|\sin 2\pi\left(\frac{b}{2N_n}+ bx\right)-\sin (2\pi bx)\right|\\
 \le& \frac{2C}{n^{1/3}}+\frac{4\pi}{\#B_n'} \sum_{b\in B_n'}\left|\frac{b}{2N_n}\right| \   \ \ \ \ \ (\mbox{using the trig identity \eqref{eq_trig}} )\\
  \le &\frac{2C}{n^{1/3}}+\frac{\pi}{2^{n-1}} \ \ \ \   \  \ \left(\mbox{since}  \ \frac{b}{N_n}\le \frac{\max B_n'}{N_n}<\frac1{2^n}.\right)\\
 \end{aligned}
 $$
 Using Lemma \ref{lemma_estimate}, we have 
 $$
 \begin{aligned}
  \max_{k/N_n\in[0,1]}\left|\widehat{\delta_{B_n/N_n}}(\frac12+k)\right|^2  \le &\frac{1}{(\#B_n')^2} +\left(\frac{2C}{n^{1/3}}+\frac{\pi}{2^{n-1}}\right)^2.
 \end{aligned}
 $$
The right hand side goes to zero as $n$ tends to infinity. This establishes (\ref{eq_lim7.1}).  The proof is complete. 

\medskip

(ii) Note that the Cantor-Moran measure constructed in (i) does not have any equi-positive subsequence. Using Proposition \ref{prop_equiv-positive} (ii), we know that $\{\widehat{\nu_{>n}}\}$ converges uniformly to $\widehat{\rho}$ on $\frac12+\Z$. 
\eproof

\medskip
\subsection{Uniform convergence of Fourier transform on non-compact sets} From Lemma \ref{lem_weak_equivalence}, we know that  a sequence of probability measures $\{\nu_n\}$ converges weakly to a probability measure $\nu$ if and only if $\{\widehat{\nu_n}\}$ converges uniformly to $\widehat{\nu}$ on all compact subsets of ${\mathbb R}$. A problem of independent interest is to ask if we can have uniform convergence over non-compact sets. 
The following simple example shows that it is possible if we do not restrict our measures $\nu_n$ to be supported on $[0,1]$

  \begin{example}
  Let $\psi$ be a non-negative compactly supported smooth function on $[0,1]$. Define $\psi_{n}(x)  = n\psi(nx)$ . Then the absolutely continuous  measure $\{\nu_n\}$ with density $ \rho\ast\psi_{n}$   converges weakly to $\rho$.  However, 
  $$
  \widehat{\nu_n}(\xi) = \widehat{\rho}(\xi) \widehat{\psi_{n}}(\xi).
  $$
  Hence, as $\widehat{\rho}(\frac12+k) = 0$, we must have  $\widehat{\nu_n}(\frac12+k) = 0$ for all $k\in\Z$. This implies that $\{\widehat{\nu_n}\}$ converges uniformly to $\widehat{\rho}$ on the non-compact set $\frac12+\Z$.   
   \end{example}

Note that the support of $\nu_n$ in the above example is inside $[0,\frac1n]\cup[1,1+\frac1n]$. However, if we restrict our attention to $\nu_n$ being supported only inside $[0,1]$. The problem becomes much harder. Yet,  Theorem \ref{maintheorem4} (ii) tells us that it is possible to converge uniformly on  $\frac12+\Z$! The following proposition shows that uniform convergence on unbounded set is not easy to achieve.  $\frac12$ is the only special point that can  allow uniform convergence.

\begin{proposition}
Suppose that $\{\nu_{n}\}$ is a sequence of probability measures on $[0,1]$ that converges weakly to $\rho$ and $\nu_{n}(\{1\}) = 0$. Then for any $\xi\in[0,1]$ and $\xi\ne \frac12$, $\{\widehat{\nu_n}\}$ does not converge uniformly to $\widehat{\rho}$ on $\xi+\Z$.
\end{proposition}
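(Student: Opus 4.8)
The plan is as follows. Since $\rho$ is supported on $\{0,1\}\subset{\mathbb Z}$, its transform $\widehat{\rho}(\zeta)=\tfrac12(1+e^{-2\pi i\zeta})$ is $1$-periodic, hence constant on the progression $\xi+{\mathbb Z}$, with value
$$
c:=\widehat{\rho}(\xi)=\cos^2(\pi\xi)-\tfrac{i}{2}\sin(2\pi\xi).
$$
Arguing by contradiction, I would assume uniform convergence on $\xi+{\mathbb Z}$, i.e. $\eta_n:=\sup_{k\in{\mathbb Z}}\bigl|\widehat{\nu_n}(\xi+k)-c\bigr|\to 0$. The whole point will be to show that this forces the atomic mass $\nu_n(\{0\})$ to converge to $c$; since $\nu_n(\{0\})$ is a nonnegative real that is $\le \tfrac12$ in the limit, while $c$ is real only at the three points $\xi\in\{0,\tfrac12,1\}$, this will pin down $\xi=\tfrac12$ and produce the contradiction.

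To extract $\nu_n(\{0\})$ I would pass to the twisted complex measure $\nu_n'$ on $[0,1]$ given by $d\nu_n'(x)=e^{-2\pi i\xi x}\,d\nu_n(x)$, which satisfies $\widehat{\nu_n'}(k)=\widehat{\nu_n}(\xi+k)$ and, crucially, $\nu_n'(\{1\})=e^{-2\pi i\xi}\nu_n(\{1\})=0$ by hypothesis. The key object is the Ces\`aro average
$$
S_{n,K}=\frac1K\sum_{k=0}^{K-1}\widehat{\nu_n'}(k)=\int_{[0,1]}D_K(x)\,d\nu_n'(x),\qquad D_K(x)=\frac1K\sum_{k=0}^{K-1}e^{-2\pi i kx}.
$$
On the one hand each $\widehat{\nu_n'}(k)$ lies within $\eta_n$ of $c$, so $|S_{n,K}-c|\le\eta_n$ for \emph{every} $K$. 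On the other hand $|D_K|\le 1$ and $D_K(x)\to\mathbf 1_{\{x=0\}}$ pointwise on $[0,1)$, so bounded convergence gives $\lim_{K\to\infty}S_{n,K}=\nu_n'(\{0\})=\nu_n(\{0\})$; here $\nu_n'(\{1\})=0$ is exactly what guarantees that $0$ is the only integer carrying mass. Letting $K\to\infty$ in the uniform estimate then yields $|\nu_n(\{0\})-c|\le\eta_n$.

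Finally I would let $n\to\infty$. Since $\eta_n\to 0$, this gives $\nu_n(\{0\})\to c$; as the left side is a sequence of nonnegative reals, $c$ must be a nonnegative real. Moreover, by the portmanteau inequality Lemma \ref{lem_weak_equivalence}(iii) applied to the compact set $\{0\}$, $\limsup_n\nu_n(\{0\})\le\rho(\{0\})=\tfrac12$, so $c\le\tfrac12$. But $\operatorname{Im}c=-\tfrac12\sin(2\pi\xi)$ vanishes only for $\xi\in\{0,\tfrac12,1\}$, where $c$ equals $1,0,1$ respectively, and of these only the value $0$ at $\xi=\tfrac12$ also satisfies $c\le\tfrac12$. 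This contradicts the hypothesis $\xi\neq\tfrac12$.

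The only delicate point is the interchange of the two limits $K\to\infty$ and $n\to\infty$, and I expect this to be handled cleanly precisely because the bound $|S_{n,K}-c|\le\eta_n$ is uniform in $K$. The conceptual heart of the argument is the observation that the twisted Ces\`aro average sees \emph{exactly} the point mass at $0$, so uniform convergence would force the periodic constant $c$ to behave like a genuine atomic weight of a probability measure converging to $\rho$, which it can only do at $\xi=\tfrac12$.
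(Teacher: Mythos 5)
Your proof is correct. Both you and the paper twist the measure by $e^{-2\pi i\xi x}$ so that the values $\widehat{\nu_n}(\xi+k)$ become the integer Fourier coefficients of a complex measure on $\mathbb{T}$, and both detect atoms by averaging those coefficients; the difference lies in which average is taken and which contradiction is extracted. The paper invokes Wiener's theorem in its quadratic form, $\lim_N\frac{1}{2N+1}\sum_{|k|\le N}|\widehat{\nu_{n,\xi}}(k)|^2=\sum_\tau|\nu_{n,\xi}(\{\tau\})|^2$, asserts that this limit is $0$ because $\nu_n$ has no atoms, and then concludes from the uniform convergence of $|\widehat{\nu_n}(\xi+k)|^2$ to $|\widehat{\rho}(\xi)|^2$ that $\cos(\pi\xi)=0$; note that the non-atomicity of $\nu_n$ is used in that argument but is not among the proposition's stated hypotheses (it holds in the intended application to the Cantor--Moran tails $\nu_{>n}$). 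Your linear Ces\`aro average $\frac1K\sum_{k=0}^{K-1}\widehat{\nu_n'}(k)$ instead recovers exactly the single atom $\nu_n(\{0\})$ --- the first-moment analogue of Wiener's formula --- and the hypothesis $\nu_n(\{1\})=0$ is precisely what kills the contribution of the only other integer in $[0,1]$, so your argument uses only the stated assumptions and applies verbatim to measures with atoms elsewhere. The price is a slightly longer endgame: rather than forcing $|\widehat{\rho}(\xi)|=0$ outright, you force $\widehat{\rho}(\xi)=\lim_n\nu_n(\{0\})$, a nonnegative real that is at most $\tfrac12$ by the portmanteau inequality, and then rule out $\xi\in\{0,1\}$ by hand. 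Both endgames are sound, the limit interchange you flag is indeed harmless because your bound $|S_{n,K}-c|\le\eta_n$ is uniform in $K$, and your version yields the marginally stronger information that under the (impossible) uniform convergence the atomic mass at $0$ would have to converge to $\widehat{\rho}(\xi)$.
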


\begin{proof}
Suppose the conclusion is false. We will have $\{\widehat{\nu_{n}}(\xi+k)\}$ converges uniformly to $\widehat{\rho}(\xi+k)$ for all $k\in\Z$.  We now identify $\nu_{n}$ as a measure on ${\mathbb T}$. As $\nu_{n}$ has no atom, we can define the complex measure 
$$
\nu_{n,\xi} (E)  =  \int_E e^{-2\pi i \xi x}d\nu_{n}(x).
$$
This measure has no atom on ${\mathbb T}$ either. Hence, by Theorem \ref{discrete-inverse-theorem}, we have 
\begin{equation}\label{eq5.3}
\lim_{N\rightarrow\infty} \frac{1}{2N+1} \sum_{k= -N}^{N}|\widehat{\nu_{n,\xi}}(k)|^2=0.
    \end{equation}
Note that 
$$
\widehat{\nu_{n,\xi}}(k) = \int e^{-2\pi i kx} e^{-2\pi i \xi x}d\nu_{n}(x) = \widehat{\nu_{n}}(\xi+k),
$$
which converges to $\widehat{\rho}(\xi+k)$ uniformly for $k\in\Z$. Recall that $\widehat{\rho}(\xi) = e^{\pi i \xi}\cos(\pi \xi)$. We now claim that  $\{|\widehat{\nu_{n}}(\xi+k)|^2\} $ also converges to $\left|\widehat{\rho}(\xi)\right|^2$ uniformly for $k\in{\mathbb Z}$. Indeed, it follows from the following estimation: 
$$
\begin{aligned}
\left||\widehat{\nu_{n}}(\xi+k) |^2-\left|\widehat{\rho}(\xi)\right|^2\right| 
=& \big||\widehat{\nu_{n}}(\xi+k)| -\left|\widehat{\rho}(\xi)\right|\big|\cdot\big|\left|\widehat{\nu_{n}}(\xi+k)\right| +\left|\widehat{\rho}(\xi)\right|\big|\\
\le & 2 \big|\left|\widehat{\nu_{n}}(\xi+k)\right| -\left|\widehat{\rho}(\xi)\right|\big|\\
= &2 \big|\left|\widehat{\nu_{n}}(\xi+k)\right| -\left|\widehat{\rho}(\xi+k)\right|\big|\\
\le &  2 \big|\widehat{\nu_{n}}(\xi+k) -\widehat{\rho}(\xi+k)\big|.
\end{aligned}
$$
As the right hand side converges uniformly for $k\in\Z$, $\{|\widehat{\nu_{n}}(\xi+k) |^2\} $ converges to $\left|\widehat{\rho}(\xi)\right|^2$ uniformly for $k\in \Z$. Now, given any $\epsilon>0$, one can find $n$ such that $\left||\widehat{\nu_{n}}(\xi+k) |^2-\left|\widehat{\rho}(\xi)\right|^2\right|<\frac{\epsilon}{2}$ for all $k\in\Z$ and (\ref{eq5.3}) tells us that we can find $N$ large so that 
$$
\frac{1}{2N+1} \sum_{k= -N}^{N}|\widehat{\nu_{n}}(\xi+k) |^2<\frac{\epsilon}{2}.
$$
Therefore,
$$
\begin{aligned}
\left|\widehat{\rho}(\xi)\right|^2\le& \left|\frac{1}{2N+1}\sum_{k= -N}^{N}|\widehat{\nu_{n}}(\xi+k) |^2-\left|\widehat{\rho}(\xi)\right|^2\right|+\frac{1}{2N+1}\sum_{k= -N}^{N}|\widehat{\nu_{n}}(\xi+k) |^2\\
=& \left|\frac{1}{2N+1}\sum_{k= -N}^{N}\left(|\widehat{\nu_{n}}(\xi+k) |^2-\left|\widehat{\rho}(\xi)\right|^2\right)\right|+\frac{1}{2N+1}\sum_{k= -N}^{N}|\widehat{\nu_{n}}(\xi+k) |^2\\
<&\epsilon.\\
\end{aligned}
$$
As $\epsilon$ is arbitrary, this forces $\left|\widehat{\rho}(\xi)\right|^2 = 0$. However, this is impossible since $\xi \ne \frac12$. Hence, there cannot be a uniform convergence of  $\{\widehat{\nu_{n}}\}$  to  $\widehat{\rho}$ on $\xi+\Z$, completing the proof. 
\end{proof}

\medskip

\subsection{General non-symmetric case.} If $(N_n,B_n)$ is not symmetric, one would require the following asymmetric version of the sum of sine conjecture. 

\medskip

\noindent{\bf Asymmetric sum of sine problem:} Let ${\mathcal B}$ be a collection of finite sets of positive integers and for each $B\in{\mathcal B}$, we associate a weight $M(B) = (m_b)_{b\in B}$ with $m_b = 1$ or $2$. We say that the {\it asymmetric sum of sine problem holds for ${\mathcal B}$ with weight $M(B)$} if  there exists $\epsilon_0>0$ and $\delta_0>0$ such that for any finite set of positive integers $B\in{\mathcal B}$ 
$$
\max_{x\in\left[0,\frac12-\delta_0\right]}\left|\sum_{b\in B} m_b \sin (2\pi b x)\right| \ge \epsilon_0 (\#B),
$$

\medskip

We have the following theorem.

\begin{theorem}
Let $\{(N_n,B_n)\}$ be the sequence that generates Cantor-Moran measure $\mu(N_n,B_n)$ with $\{\nu_{>n}\}$ converging weakly to $\rho$ and $\lim_{n\rightarrow\infty}\#B_n = \infty$. Suppose that the asymmetric sum of sine conjecture holds for ${\mathcal B} = \{B_{n,\delta,0}\cup B_{n,\delta,1}\}$ with weight $m_b = 2$ if $b\in B_{n,\delta,0}\cap B_{n,\delta,1}$ and $m_b = 1$ otherwise. Then $\{\nu_{>n}\}$ has an equi-positive subsequence and $\mu(N_n,B_n)$ is a frame-spectral measure if each $(N_n,B_n,L_n)$ forms a frame triple.
\end{theorem}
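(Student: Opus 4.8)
The plan is to follow verbatim the route taken in the proof of Theorem \ref{maintheorem3}, with the symmetric identity \eqref{eq_symmetric} replaced by the general lower bound \eqref{eq_6.10} of Lemma \ref{lemma_estimate}. As in the symmetric case, it suffices to verify statement (i) of Theorem \ref{theorem_6.3}. Once (i) is established, Theorem \ref{theorem_6.3} supplies (ii); applying Proposition \ref{prop_equiv-positive} with $\nu_n := \nu_{>n}$ (legitimate since $\{\nu_{>n}\}$ converges weakly to $\rho$) then produces an equi-positive subsequence of $\{\nu_{>n}\}$; and finally Theorem \ref{theorem_admissible_spectral}, in its frame triple formulation, yields that $\mu(N_n,B_n)$ is frame-spectral. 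So the whole problem reduces to exhibiting, for a positive constant independent of $n$, a frequency $k$ with $k/N_n\in[0,1]\setminus(\frac12-\delta_0,\frac12+\delta_0)$ at which $|\widehat{\delta_{B_n/N_n}}(\frac12+k)|$ stays bounded below.

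First I would invoke the asymmetric sum of sine hypothesis to fix constants $\epsilon_0>0$ and $\delta_0>0$ valid for $\mathcal{B}=\{B_{n,\delta,0}\cup B_{n,\delta,1}\}$ with the prescribed weights $m_b$, and then take $\delta$ small relative to $\epsilon_0$ (of the form $\delta<\epsilon_0/(C\pi)$ for a suitable absolute constant $C$) and $n$ large enough that \eqref{eq_6.10} holds with an error $\epsilon$ also comparable to $\delta$. For each such $n$ the hypothesis furnishes a maximizer $x=x_n\in[0,\frac12-\delta_0]$ with
$$
\left|\frac{1}{\#B_n}\sum_{b\in B_{n,\delta,0}\cup B_{n,\delta,1}} m_b\sin(2\pi b x)\right|\ge \frac{\epsilon_0\,\#(B_{n,\delta,0}\cup B_{n,\delta,1})}{\#B_n}.
$$
At this point I would use that $\{\delta_{B_n/N_n}\}$ converges weakly to $\rho$ (Lemma \ref{lemma delta measure convergence}): by Lemma \ref{lem_weak_equivalence} the central mass $\delta_{B_n/N_n}(\delta,1-\delta)=\#(B_n\setminus B_{n,\delta})/\#B_n$ tends to $0$, so $\#B_{n,\delta}/\#B_n\to 1$; combined with $\#(B_{n,\delta,0}\cup B_{n,\delta,1})\ge\frac12\#B_{n,\delta}$ this keeps the right-hand side above $\epsilon_0/4$ for $n$ large. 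This is the step that converts the hypothesis's bound, which is proportional to $\#(B_{n,\delta,0}\cup B_{n,\delta,1})$, into one independent of $n$ after normalizing by $\#B_n$.

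Next I would discretize exactly as in Theorem \ref{maintheorem3}: set $k=[N_n x]$ and $\xi_b=\frac{b}{2N_n}-\frac{b\langle N_n x\rangle}{N_n}$, so that the sine in \eqref{eq_6.10} evaluated at $\frac12+k$ equals $\sin 2\pi(\xi_b+bx)$. Every $b\in B_{n,\delta,0}\cup B_{n,\delta,1}$ satisfies $b/N_n\le\delta$, hence $|\xi_b|\le\delta$, and the sum-to-product identity \eqref{eq_trig} bounds the replacement error by
$$
\frac{1}{\#B_n}\sum_{b} m_b\left|\sin 2\pi(\xi_b+bx)-\sin 2\pi bx\right|\le 2\pi\delta\cdot\frac{\#B_{n,\delta}}{\#B_n}\le 2\pi\delta.
$$
Combining the main lower bound $\epsilon_0/4$, this error $2\pi\delta$, and the error $\epsilon$ of \eqref{eq_6.10}, the smallness of $\delta$ and $\epsilon$ gives $|\widehat{\delta_{B_n/N_n}}(\frac12+k)|\ge\epsilon_0'$ for some $\epsilon_0'>0$ independent of $n$. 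Since $k/N_n\le x\le\frac12-\delta_0$, we have $k/N_n\in[0,1]\setminus(\frac12-\delta_0,\frac12+\delta_0)$, which is precisely statement (i) of Theorem \ref{theorem_6.3}.

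I expect the main obstacle to be bookkeeping rather than conceptual: correctly tracking the weights $m_b$ and the distinction between the set $B_{n,\delta,0}\cup B_{n,\delta,1}$ and the multiset count $\#B_{n,\delta,0}+\#B_{n,\delta,1}=\#B_{n,\delta}$, and making sure the normalization by $\#B_n$ (rather than by $\#(B_{n,\delta,0}\cup B_{n,\delta,1})$) does not dilute the lower bound away. This is resolved precisely by the weak convergence $\delta_{B_n/N_n}\to\rho$, which forces the mass of $B_n$ in the central band $(\delta,1-\delta)$ to be asymptotically negligible; the remainder of the argument is a direct transcription of the symmetric proof.
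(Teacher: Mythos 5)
Your proposal is correct and follows essentially the same route as the paper, whose own proof of this theorem is simply the observation that the argument for Theorem \ref{maintheorem3} goes through verbatim with the bound \eqref{eq_6.10} in place of the symmetric identity \eqref{eq_symmetric}. The one detail you add beyond the paper's sketch --- converting the hypothesis's lower bound $\epsilon_0\,\#(B_{n,\delta,0}\cup B_{n,\delta,1})$ into a bound after normalizing by $\#B_n$, via $\#(B_{n,\delta,0}\cup B_{n,\delta,1})\ge\tfrac12\#B_{n,\delta}$ and the fact that $\#B_{n,\delta}/\#B_n\to 1$ from the weak convergence $\delta_{B_n/N_n}\to\rho$ --- is exactly the right bookkeeping and is consistent with the estimates the paper establishes before Lemma \ref{lemma_estimate}.
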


The proof is identical to the proof of Theorem \ref{maintheorem3} using (\ref{eq_6.10}) instead of (\ref{eq_symmetric}). We will omit the detail.

\medskip

\section{Examples of  sum of sine problem}
  Although the Bourgain's sum of sine problem cannot hold for the class of all finite set of integers, we can still verify several large subclasses such that the sum of sine problem holds. They are in Proposition \ref{exam-sine-conjecture} below. It means that whenever we take $B_n$ from the following subclasses or finitely many of the following subclasses, the Cantor-Moran measure $\mu(N_n,B_n)$ will have an equi-positive subsequence $\{\nu_{>n}\}$ and hence, spectrality or frame-spectrality problem can be solved. 

\medskip

\begin{proposition}\label{exam-sine-conjecture}
The sum of sine problem is true for the following class of finite sets of positive integers.
\begin{enumerate}
    \item ${\mathcal B}_M = \{ B\subset{\mathbb Z}^+: \#B\le M \}$, where $M$ is a positive constant.
    \item ${\mathcal C}_c  = \{B\subset{\mathbb Z}^+: \frac{\#B}{\max B}\ge c \}$, where $c$ is a positive constant.
    \item ${\mathcal L}_A  = \{B\subset{\mathbb Z}^+: B \ \mbox{is lacunary with lacunary constant A} \}$ and $A>2$. 
\end{enumerate}
\end{proposition}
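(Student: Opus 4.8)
The plan is to handle the three classes by three different mechanisms, reflecting that the cardinality $\#B$ is bounded in (i), comparable to $\max B$ in (ii), and unbounded but multiplicatively spaced in (iii). For class ${\mathcal B}_M$ I would use an $L^2$ argument. Since $\{\sin(2\pi bx)\}_{b\in{\mathbb Z}^+}$ are orthogonal on $[0,1]$ with $\int_0^1\sin^2(2\pi bx)\,dx=\frac12$, we have $\int_0^1\big|\sum_{b\in B}\sin(2\pi bx)\big|^2dx=\frac{\#B}{2}$, and as the integrand is symmetric about $x=\frac12$, also $\int_0^{1/2}\big|\sum_{b\in B}\sin(2\pi bx)\big|^2dx=\frac{\#B}{4}$. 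Discarding $[\frac12-\delta_0,\frac12]$ via the trivial bound $\big|\sum_{b\in B}\sin(2\pi bx)\big|\le\#B$ yields $\int_0^{1/2-\delta_0}\big|\sum_{b\in B}\sin(2\pi bx)\big|^2dx\ge\frac{\#B}{4}-\delta_0(\#B)^2$, so the square of the maximum on $[0,\frac12-\delta_0]$ is at least $\frac{\#B}{2}-2\delta_0(\#B)^2$. Taking $\delta_0=\frac{1}{8M}$ and using $\#B\le M$ bounds this below by $\frac{\#B}{4}$, whence $\epsilon_0=\frac{1}{2\sqrt M}$ works.

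For ${\mathcal C}_c$ I would simply evaluate at one explicit point. With $n=\max B$ and $m=\#B\ge cn$, set $x=\frac{1}{4n}$, so that $2\pi bx\in(0,\frac{\pi}{2}]$ and hence $\sin(2\pi bx)\ge\frac{b}{n}$ for every $b\in B$ by Jordan's inequality $\sin t\ge\frac{2t}{\pi}$ on $[0,\frac{\pi}{2}]$. Bounding $\sum_{b\in B}b$ below by $1+2+\cdots+m\ge\frac{m^2}{2}$ (the elements are distinct positive integers) gives $\sum_{b\in B}\sin(2\pi bx)\ge\frac{m^2}{2n}\ge\frac{c}{2}\,m$, using $n\le m/c$. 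Since $x\le\frac14$, any $\delta_0\le\frac14$ places $x$ in $[0,\frac12-\delta_0]$, and $\epsilon_0=\frac{c}{2}$ suffices.

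The interesting case is the lacunary class ${\mathcal L}_A$, for which I would use a Riesz product. Writing $B=\{b_1<\cdots<b_m\}$ with $b_{k+1}\ge Ab_k$ and $A>2$, the crucial arithmetic fact is that $A>2$ forces $b_k>b_{k-1}+\cdots+b_1$, so no nonempty signed combination $\pm b_{k_1}\pm\cdots\pm b_{k_r}$ vanishes and each $b_k$ is represented only by a singleton. Hence for $P(x)=\prod_{k=1}^m\big(1+\sin(2\pi b_kx)\big)\ge0$ one computes $\int_0^1 P\,dx=1$ (only the empty product contributes to the constant term) and $\int_0^1 P(x)\sin(2\pi b_kx)\,dx=\frac12$ for each $k$ (the frequency $b_k$ arises only from the singleton term, with coefficient $\frac{1}{2i}$). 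Summing, $\int_0^1 P(x)\big(\sum_{k}\sin(2\pi b_kx)\big)\,dx=\frac{m}{2}$, and since $P\ge0$ with $\int_0^1 P=1$ this forces $\sup_{x\in[0,1]}\sum_{k}\sin(2\pi b_kx)\ge\frac{m}{2}=\frac12\#B$. By the antisymmetry $\sum_k\sin(2\pi b_k(1-x))=-\sum_k\sin(2\pi b_kx)$, this supremum over $[0,1]$ equals the supremum of the absolute value over $[0,\frac12]$.

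The main obstacle I anticipate is precisely the localization from $[0,\frac12]$ to $[0,\frac12-\delta_0]$ in the lacunary case, with a $\delta_0$ that is \emph{uniform} in $B$: the naive bound on $\sum_k\sin(2\pi b_kx)$ near $\frac12$ through $|f'|\le2\pi\sum_k b_k$ degrades with $b_m$, so it cannot produce a uniform $\delta_0$. I would instead control the contribution of a fixed neighborhood $(\frac12-\delta_0,\frac12+\delta_0)$ of $\frac12$ to the pairing $\int_0^1 P\big(\sum_k\sin(2\pi b_kx)\big)\,dx$, for instance by replacing $P$ with a weighted nonnegative Riesz product suppressed near $\frac12$ while still extracting each frequency $b_k$ with a coefficient bounded away from $0$, so that a definite fraction of the mass $\frac{m}{2}$ provably survives on $[0,\frac12-\delta_0]$. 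Once this localization is settled, $\epsilon_0$ and $\delta_0$ depend only on $A$ and (iii) follows; classes (i) and (ii) are essentially computational once the orthogonality identity, respectively the test point, is in place.
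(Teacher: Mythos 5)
Your parts (i) and (ii) are correct. Part (ii) is essentially the paper's own proof: both evaluate at the single point $x=\frac{1}{4\max B}$ and use $\sin t\ge\frac{2t}{\pi}$ on $[0,\frac{\pi}{2}]$ (the paper lower-bounds the sum by comparison with $\{0,1,\dots,\#B-1\}$ and a closed-form Dirichlet-type identity, you lower-bound $\sum_{b\in B}b$ directly; same mechanism). Part (i) is a genuinely different and more elementary route: the paper invokes the Tur\'an--Nazarov inequality on $E=[0,\frac13]$ and gets $\epsilon_0=(\frac{1}{42})^{2M-1}$, whereas your orthogonality/$L^2$-mean argument is self-contained, uses the antisymmetry of $\sum_{b\in B}\sin(2\pi bx)$ about $x=\frac12$ correctly, and yields the far better constant $\epsilon_0=\frac{1}{2\sqrt{M}}$. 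That part of your write-up could stand as a cleaner proof of (i).

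Part (iii) has a genuine gap. The combinatorial claim on which your Riesz product rests --- that each frequency $b_j$ is represented only by the singleton term, so that $\int_0^1P(x)\sin(2\pi b_jx)\,dx=\frac12$ --- is false for $2<A<3$. The hypothesis $A>2$ does give $b_k>b_{k-1}+\cdots+b_1$, which is enough to kill all nonzero signed sums and hence gives $\int_0^1P=1$; but uniqueness of the representation of a single frequency $b_j$ is a strictly stronger (dissociateness) condition, requiring roughly $b_K\ge 2\sum_{k<K}b_k$, i.e.\ ratio at least $3$. Concretely, $B=\{1,3,7\}$ lies in ${\mathcal L}_A$ for any $2<A\le\frac73$, and $-b_1-b_2+b_3=3=b_2$, so the coefficient of $e^{2\pi i\cdot 3x}$ in $P$ picks up a second contribution $\frac{1}{(2i)^3}$ on top of $\frac{1}{2i}$, and one computes $\int_0^1P(x)\sin(6\pi x)\,dx=\frac38\ne\frac12$. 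So the identity $\int_0^1P\cdot\bigl(\sum_k\sin(2\pi b_kx)\bigr)\,dx=\frac{m}{2}$ fails on exactly the range of $A$ the proposition covers, and there is no a priori control on the sign or size of the spurious contributions in general. On top of this, the localization from $[0,\frac12]$ to $[0,\frac12-\delta_0]$ with $\delta_0$ uniform in $B$ is, as you yourself flag, not actually carried out.

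The paper's proof of (iii) avoids both problems by working pointwise rather than in the mean: fixing $\epsilon_0$ with $\frac12-\frac{\sin^{-1}\epsilon_0}{\pi}\ge\frac1A$, it builds a nested chain of intervals $I_1\supset I_2\supset\cdots\supset I_M$ with $|I_k|=\bigl(\frac12-\frac{\sin^{-1}\epsilon_0}{\pi}\bigr)\frac{1}{b_k}\ge\frac{1}{b_{k+1}}$ --- this is precisely where $A>2$ is used, since $I_k$ then contains a full period of $\sin(2\pi b_{k+1}x)$ and hence a subinterval of the required length on which $\sin(2\pi b_{k+1}x)\ge\epsilon_0$ --- so that on $I_M$ every term of the sum is at least $\epsilon_0$ simultaneously. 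Since $I_M\subset I_1\subset\bigl[\frac{\sin^{-1}\epsilon_0}{2\pi b_1},\frac{1}{2b_1}-\frac{\sin^{-1}\epsilon_0}{2\pi b_1}\bigr]$ and $b_1\ge1$, the witnessing point automatically lies in $[0,\frac12-\delta_0]$ with $\delta_0=\frac{\sin^{-1}\epsilon_0}{2\pi}$, so localization comes for free. To salvage your approach you would need both to restrict to $A\ge3$ (or otherwise repair the frequency-uniqueness) and to construct the suppressed Riesz product you allude to; as written, (iii) is not proved.
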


\medskip

\subsection{$\#B$ is Uniformly bounded}
The following Turan theorem resembles the sum of sine problem and it will be used in the proof of Proposition \ref{exam-sine-conjecture}. This theorem is also commonly referred as the {\it Turan-Nazarov inequality}. 

\begin{theorem} (\cite[Theorem 1.4]{N94})\label{TZ-ineq}
Let $p(z) = \sum_{k=1}^n c_k z^{m_k}$ where $c_k\in{\mathbb C}$ and $m_1<...<m_n\in{\mathbb Z}$ be a trigonmetric polynomial on the unit circle ${\mathbb T}$, and let $E$ be a measurable subset of ${\mathbb T}$. Then
\begin{equation}\label{TZ-inequality}
\sum_{k=1}^n|c_k|\le \left(\frac{14}{|E|}\right)^{n-1} \sup_{z\in E}|p(z)|.
\end{equation}
(Here, $|E|$ denotes the Lebesgue measure of $E$.)
\end{theorem}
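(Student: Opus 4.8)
The plan is to separate the coefficient sum from the analytic heart of the estimate. First I would dispose of the passage from $\sum_k|c_k|$ to the global supremum: since each $c_k$ is a Fourier coefficient of $p$, one has $|c_k| = \left|\frac{1}{|\mathbb{T}|}\int_{\mathbb{T}} p(z)\,\overline{z^{m_k}}\,dz\right| \le \sup_{z\in\mathbb{T}}|p(z)|$, so that $\sum_{k=1}^n|c_k|\le n\sup_{\mathbb{T}}|p|$. It therefore suffices to prove a \emph{Remez--Turán comparison}
\[
\sup_{z\in\mathbb{T}}|p(z)| \le \left(\frac{C}{|E|}\right)^{n-1}\sup_{z\in E}|p(z)|
\]
with an absolute constant $C$. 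The extra factor $n$ from the coefficient bound is then harmless: optimizing the constants below to some absolute $C\le 7$ and using $n\le 2^{n-1}\le (14/C)^{n-1}$ recovers the stated constant $14$, with the base case $n=1$ being the trivial equality $|c_1|=\sup_E|p|$.

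I would prove the comparison inequality in two stages. \textbf{Stage 1 (arcs).} Fix arcs $J\subset I\subset\mathbb{T}$ and show $\sup_I|p|\le (C|I|/|J|)^{n-1}\sup_J|p|$. The functions $\{z^{m_k}\}_{k=1}^n$ form a Chebyshev system on any arc, so choosing $n$ equally spaced nodes $t_1<\dots<t_n$ in $J$ one has the generalized Lagrange representation $p(s)=\sum_{j=1}^n p(t_j)\,\ell_j(s)$, where each $\ell_j$ is a ratio of generalized (exponential) Vandermonde determinants. Bounding these ratios at $s\in I$ by products of quotients of the relevant node differences produces exactly the growth factor $(C|I|/|J|)^{n-1}$, and then $\sup_I|p|\le\big(\max_{s\in I}\sum_j|\ell_j(s)|\big)\sup_J|p|$ gives the claim.

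\textbf{Stage 2 (arbitrary measurable $E$).} Here the goal is the sublevel (distribution-function) estimate
\[
\left|\{\,z\in\mathbb{T}:\ |p(z)|<\lambda\,\sup_{\mathbb{T}}|p|\,\}\right| \le C_n\,\lambda^{1/(n-1)}\,|\mathbb{T}|,
\]
since then choosing $\lambda$ with $C_n\lambda^{1/(n-1)}|\mathbb{T}|<|E|$ forces a point of $E$ outside the sublevel set, i.e. $\sup_E|p|\ge\lambda\sup_{\mathbb{T}}|p|$, which is the comparison inequality with $I=\mathbb{T}$. I expect this passage to be the main obstacle. The tempting route — each connected component of the sublevel set has length $\le C\lambda^{1/(n-1)}$ by Stage 1, so sum over components — \emph{fails}, because the number of components is governed by the size of the frequencies $m_k$ rather than by $n$ (already $p=1+z^{M}$ has order of $M$ components in its sublevel set). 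The correct mechanism is a uniform negative-moment (reverse-Hölder) bound $\frac{1}{|\mathbb{T}|}\int_{\mathbb{T}}\big(|p|/\sup_{\mathbb{T}}|p|\big)^{-s}\,dt\le C(n)$ for some $s=s(n)\in(0,1)$ depending only on the number of terms, from which Chebyshev's inequality yields the displayed sublevel estimate. I would establish the negative-moment bound by induction on $n$: after multiplying $p$ by a unimodular exponential (which does not change $|p|$) so that its lowest frequency is $0$, the derivative $p'$ is an exponential polynomial with only $n-1$ frequencies, and relating the oscillation and small-value set of $p$ to the size of $p'$ lowers the order. The delicate point, and the crux of the whole argument, is exactly this step: $\sup_{\mathbb{T}}|p'|$ is controlled by the (possibly enormous) frequencies and cannot be bounded by $\sup_{\mathbb{T}}|p|$, so one must balance the size of $p'$ against the spread of $E$ by localizing to suitably short ``good'' arcs on which both the interval estimate of Stage 1 and the induction hypothesis for $p'$ can be applied simultaneously.
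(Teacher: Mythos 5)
First, a point of reference: the paper does not prove this statement at all — it is quoted verbatim from Nazarov \cite[Theorem 1.4]{N94} and used as a black box — so there is no in-paper argument to compare yours against, and the result you are trying to reprove is a genuinely deep one. Judged on its own, your proposal is an outline rather than a proof, and Stage 1 rests on a false premise: the functions $z^{m_1},\dots,z^{m_n}$ do \emph{not} form a Chebyshev system on an arc of $\mathbb{T}$. Already for $n=2$ the combination $1-z^{M}$ vanishes at every $M$-th root of unity, hence has on the order of $M|J|$ zeros on an arc $J$ rather than at most one; equivalently, the generalized Vandermonde $\det\bigl[w_j^{m_k}\bigr]$ can vanish (or be extremely small) for well-separated points $w_j$ on the circle. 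Consequently the Lagrange basis functions $\ell_j$ need not exist, and where they do their size is governed by the frequencies $m_k$ rather than by $n$ — exactly the dependence the theorem must avoid. The interval case of the Tur\'an lemma is true, but its proof (Tur\'an's power-sum identity, which interpolates at $n$ consecutive samples $t_0+jh$ and bounds the interpolation coefficients uniformly over the positions of the nodes $e^{im_kh}$, even when they nearly collide) works by a mechanism essentially different from naive interpolation at $n$ nodes of $J$.

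Second, Stage 2 is where the real content of Nazarov's theorem lies, and your proposal leaves it open. You correctly diagnose that summing over components of the sublevel set fails and that one needs a distributional bound of the form $\left|\{\,|p|<\lambda\sup_{\mathbb{T}}|p|\,\}\right|\le C(n)\,\lambda^{1/(n-1)}$ with constants depending only on the number of terms; but the proposed induction via $p'$ is precisely the step you yourself flag as ``the crux'' and do not carry out. Balancing $\sup|p'|$ (controlled only by the possibly enormous frequencies) against the geometry of $E$ by localizing to good arcs is the heart of Nazarov's argument and occupies a substantial portion of \cite{N94}; without it, what you have established is only the easy reduction $\sum_k|c_k|\le n\sup_{\mathbb{T}}|p|$ together with the (unverified) constant bookkeeping $n\le (14/C)^{n-1}$. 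So the plan correctly identifies the two difficulties but overcomes neither; as written it is not a proof of the inequality.
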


\medskip
 
 \noindent \textbf{Proof of Proposition \ref{exam-sine-conjecture} (i):}
 For any $B\in {\mathcal B}_M$, it determines a trigonmetric polynomial $p(z)=\sum_{b\in B}z^b-\sum_{b\in B}z^{-b}$ on $\mathbb{T}$.
 From Theorem \ref{TZ-ineq} and taking $E = [0,1/3]$, we have 
 $$\max_{x\in [0, \frac13]}\left|p(e^{2\pi i x})\right|\geq \left(\frac1{42}\right)^{2\#B-1}\cdot2(\#B)\geq2\left(\frac1{42}\right)^{2M-1}\cdot(\# B).$$
  On the other hand,  
 \begin{eqnarray*}
p\left(e^{2\pi i x}\right)&=&\sum_{b\in B}e^{2\pi i bx}-\sum_{b\in B}e^{-2\pi i bx}=2i\sum_{b\in B}\sin(2\pi bx).
 \end{eqnarray*} 
Hence 
$$\max_{x\in[0, \frac13]}\left|\sum_{b\in B}\sin(2\pi bx)\right|\geq\left(\frac1{42}\right)^{2M-1}\cdot(\# B).$$
\eproof

\medskip

     Indeed,  using Proposition \ref{exam-sine-conjecture}(i) and following the same  argument  of (i) implies (ii) in the proof of Theorem \ref{maintheorem3}, we can also give another proof for Theorem \ref{lemma_bounded_B} with a subsequence such that $\#B_n$ is bounded.

 

\subsection{Finite set of integers that is not so sparse} For the  finite set $B\subset{\mathbb N}$, the following two results indicate that   if $B$  is not so sparse inside $[0, \max B]$, then $\sum_{b\in B}\sin 2\pi bx$ has an uniform lower bound on $[0, \frac12-\delta_0]$.
 
 \medskip
 
\noindent  \textbf{Proof of Proposition \ref{exam-sine-conjecture} (ii):}
For any $B\in {\mathcal C}_c$, denote $p_B=\max B$ and $M_B=\#B$, then by the assumption, 
$$\frac{M_B}{p_B}\geq c >0.$$
Choose $x_0=\frac{1}{4p_B}$ which is a point in $[0, \frac14]$. For any $b\in B$, we have $2\pi bx_0\in[0, \frac{\pi}{2}]$. Since $\sin x$ is increasing on $[0, \frac{\pi}{2}]$ and  $\frac{2}{\pi}x\le\sin x\le x$ on $[0, \frac{\pi}{2}]$, we have 
\begin{eqnarray*}
 \sum_{b\in B}\sin(2\pi bx_0) &\geq&\sum_{b=0}^{M_B-1}\sin(2\pi bx_0)\\
  &=&\frac{\sin (M_B\pi x_0)\sin ((M_B-1)\pi x_0)}{\sin\pi x_0}\\
 &\geq&\frac{2M_B x_0\cdot 2(M_B-1)x_0}{\pi x_0}\\
 &\geq&\frac{M_B^2}{2\pi p_B}\geq\frac{c}{2\pi}\cdot  \#B.
\end{eqnarray*}
We complete the proof.
\eproof

\medskip
As a simple example for Proposition \ref{exam-sine-conjecture}(ii), we can take $B = \{0,1,...,p_B-1\}$. Then $\#B/p_B = 1$ and thus $B\in {\mathcal C}_1$.  This proposition can also be slightly  generalized in the following form.

\begin{lemma}
 For any  $B\in{\mathcal C}_{c, \ell} : = \left\{B\subset{\mathbb N}: \frac{\#(B\cap[\frac{p_B}{2^{\ell}}, p_B])}{\# B}\geq c \right\}$,
 where $p_B = \max B$ and $c, \ell>0$ are some fixed constants, we have
$$
\max_{x\in[0, \frac14]}\left|\sum_{b\in B}\sin (2\pi b x)\right|\geq c\sin\left(\frac{\pi}{2^{1+\ell}}\right)\cdot(\# B).
$$
\end{lemma}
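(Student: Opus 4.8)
The plan is to bound the maximum from below by evaluating the sine sum at a single, carefully chosen point $x_0 \in [0,1/4]$, with no appeal to any harmonic-analytic inequality. The argument parallels the proof of Proposition \ref{exam-sine-conjecture}(ii), except that here I have no control on $\#B/\max B$ and must instead exploit only the mass that $B$ places in the upper dyadic band $[p_B/2^{\ell}, p_B]$.

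First I would set $x_0 = \frac{1}{4p_B}$, where $p_B = \max B$. Since $p_B \ge 1$, this point lies in $[0,1/4]$, so it is a legitimate competitor for the maximum. The crucial sign observation is that for \emph{every} $b \in B$ one has $b \le p_B$, hence $2\pi b x_0 = \frac{\pi b}{2 p_B} \in [0,\pi/2]$; on this interval $\sin \ge 0$, so each summand $\sin(2\pi b x_0)$ is nonnegative and the entire sum is nonnegative. Consequently $\left|\sum_{b \in B}\sin(2\pi b x_0)\right| = \sum_{b \in B}\sin(2\pi b x_0)$, and this value is dominated by the maximum over $[0,1/4]$. Nonnegativity also means that discarding the small elements of $B$ only decreases the sum, which is exactly what I want.

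Next I would restrict attention to the heavy part $B \cap [p_B/2^{\ell}, p_B]$. For $b$ in this band, $2\pi b x_0 = \frac{\pi b}{2 p_B} \in \bigl[\frac{\pi}{2^{1+\ell}},\, \frac{\pi}{2}\bigr]$, since $b = p_B/2^{\ell}$ maps precisely to $\frac{\pi}{2^{1+\ell}}$ and $b = p_B$ maps to $\frac{\pi}{2}$. As $\sin$ is increasing on $[0,\pi/2]$, each such summand is at least $\sin\bigl(\frac{\pi}{2^{1+\ell}}\bigr)$. Combining the two steps,
$$
\sum_{b \in B}\sin(2\pi b x_0) \;\ge\; \sum_{b \in B \cap [p_B/2^{\ell}, p_B]}\sin(2\pi b x_0) \;\ge\; \#\bigl(B \cap [p_B/2^{\ell}, p_B]\bigr)\cdot \sin\Bigl(\frac{\pi}{2^{1+\ell}}\Bigr),
$$
and the defining hypothesis $\#\bigl(B \cap [p_B/2^{\ell}, p_B]\bigr) \ge c\,\#B$ of $\mathcal{C}_{c,\ell}$ upgrades the right-hand side to $c\,\sin\bigl(\frac{\pi}{2^{1+\ell}}\bigr)\cdot \#B$, which is the asserted bound.

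There is essentially no analytic obstacle: the sign control furnished by the choice $x_0 = 1/(4p_B)$ reduces everything to one monotonicity estimate for $\sin$ together with the band hypothesis. The only points demanding care are verifying $x_0 \in [0,1/4]$ and checking that the lower endpoint $p_B/2^{\ell}$ is sent to exactly $\frac{\pi}{2^{1+\ell}}$, so that the constant appears with the correct exponent. I would also emphasize that, in contrast to part (i), neither an upper bound on $\#B$ nor any restriction on $p_B$ enters, so the estimate is genuinely uniform over all of $\mathcal{C}_{c,\ell}$.
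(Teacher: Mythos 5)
Your proof is correct and is essentially identical to the paper's: both evaluate at $x_0=\frac{1}{4p_B}$, use nonnegativity of all summands on $[0,\pi/2]$ to drop the light elements, and bound each term in the band $[p_B/2^{\ell},p_B]$ below by $\sin(\pi/2^{1+\ell})$ via monotonicity of $\sin$. No further comment is needed.
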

\begin{proof}
 The proof is similar to that of Proposition \ref{exam-sine-conjecture} (ii). It is clear that for any $b\in B$, we have $2\pi bx_0\in[0, \frac{\pi}{2}]$ with $x_0=\frac{1}{4p_B}$.  Since $\sin x$ is increasing on $[0, \frac{\pi}{2}]$, we have 
\begin{eqnarray*}
 \sum_{b\in B}\sin(2\pi bx_0) &\geq&\sum_{b\in B\cap[\frac{p_B}{2^{\ell}}, p_B]}\sin(2\pi bx_0)\\
 &\geq&\sin \left(\frac{2\pi x_0p_B}{2^{\ell}}\right)\cdot\#\left(B\cap[\frac{p_B}{2^{\ell}}, p_B]\right)\\
 &\geq&c\sin\left(\frac{\pi}{2^{1+\ell}}\right)\cdot\left(\# B\right).
\end{eqnarray*}
\end{proof}

\subsection{Lacunary set of integers.} Recall that a finite set of positive integers $B = \{b_1<b_2<...<b_M\}$ is called {\it $A$-lacunary}, where $A>1$, if $b_{j+1}\ge A  b_j$. The collection of all finite sets of $A$-lacunary integers will be denoted by ${\mathcal L}_A$, as in Proposition \ref{exam-sine-conjecture}(iii). We now give a proof of this part of the  proposition.

\medskip

\noindent\textbf{Proof of Proposition \ref{exam-sine-conjecture} (iii):}
Since $A>2$, there exists $\epsilon_0>0$ such that $\frac12-\frac{\sin^{-1}\epsilon_0}{\pi}
\geq\frac1{A}.$ Suppose  $B=\{b_n\}_{n=0}^M\in{\mathcal L}_A$, then 
\begin{equation}\label{eq-lacunary}
b_{n+1}\geq A b_n\geq\left(\frac12-\frac{\sin^{-1}\epsilon_0}{\pi}\right)^{-1}b_n, \quad 0\le n\le M-1.
\end{equation}
We claim that there is a sub-interval $I\subset[0, \frac12-\frac{\sin^{-1}\epsilon_0}{2\pi}]$ such that 
$$\sin2\pi b x\geq \epsilon_0,\quad \forall\  b\in B\setminus\{0\},\  x\in I.$$
With the claim proved, we will have $\sum_{b\in B}\sin (2\pi bx)\ge \epsilon_0 (\#B)$ for all $x\in I$ and hence the result follows.

\medskip

We now justify the claim by  finding  $I$ by inductively. Note that 
\begin{eqnarray*}
\sin2\pi bx\geq\epsilon_0  \Longleftrightarrow  x\in\left[\frac{\sin^{-1}\epsilon_0}{2\pi b}, \frac{1}{2b}-\frac{\sin^{-1}\epsilon_0}{2
 \pi b}\right]+\frac1b{\mathbb Z}. 
\end{eqnarray*}  
Suppose $b_0=0$ and $b_n>0$ for $1\le n\le M$.  Take the interval $$I_1=\left[\frac{\sin^{-1}\epsilon_0}{2\pi b_1}, \frac{1}{2b_1}-\frac{\sin^{-1}\epsilon_0}{2\pi b_1}\right].$$
 Then $\sin2\pi b_1 x\geq \epsilon_0$ for all $x\in I_1$.
From \eqref{eq-lacunary}, the length of $I_1$ satisfies $$|I_1|=\left(\frac1{2}-\frac{\sin^{-1}\epsilon_0}{\pi}\right)\cdot\frac1{b_1}\geq\frac{1}{b_2}.$$
As $\sin 2\pi b_2 x$ is $\frac{1}{b_2}-$period,  we can pick an interval $I_2$ such that
$$I_2\subset I_1\cap\left(\left[\frac{\sin^{-1}\epsilon_0}{2\pi b_2}, \frac{1}{2b_2}-\frac{\sin^{-1}\epsilon_0}{2
 \pi b_2}\right]+\frac1{b_2}{\mathbb Z}\right) \text{ and } |I_2|=\left(\frac1{2}-\frac{\sin^{-1}\epsilon_0}{\pi}\right)\cdot\frac1{b_2}.$$
Then for all $x\in I_2$, we have $\sin 2\pi b_1x\geq \epsilon_0 \text{ and } \sin 2\pi b_2x\geq \epsilon_0.$
 Suppose that $I_{M-1}$ has been found, which satisfies that $|I_{M-1}|=\left(\frac1{2}-\frac{\sin^{-1}\epsilon_0}{\pi}\right)\cdot\frac1{b_{M-1}}$ and 
$$\sin 2\pi b_n x\geq \epsilon_0, \quad \forall\ 1\le n\le M-1, x\in I_{M-1}.$$
Again by \eqref{eq-lacunary}, $|I_{M-1}|\geq \frac1{b_M}$ and also by $\sin 2\pi b_M x$ is $\frac{1}{b_M}-$period, we can select $I_M$ such that 
$$I_M\subset I_{M-1}\cap\left[\frac{\sin^{-1}\epsilon_0}{2\pi b_M}, \frac{1}{2b_M}-\frac{\sin^{-1}\epsilon_0}{2
 \pi b_M}\right]+\frac1{b_M}{\mathbb Z} \text{ and } |I_M|=\left(\frac1{2}-\frac{\sin^{-1}\epsilon_0}{\pi}\right)\cdot\frac1{b_M}.$$
Then, we have 
$$\sin 2\pi b_nx\geq \epsilon_0, \quad \forall\ 1\le n\le M, x\in I_M.$$ 
Take $I = I_M$ and our result follows.
\eproof

 \section{Remark and open questions}
 
\subsection{Cantor-Moran measures on $[0,1]$.} For the {\bf (Qu 1)} in the introduction with the Cantor-Moran measure supported on $[0,1]$, what is still unsolved is the case that $\{\nu_{>n}\}$ converges weakly to $\rho$ and $\lim_{n\rightarrow\infty}\#B_n = \infty$. However, the Bourgain's sum of sine theorem tells us that there exists Cantor-Moran measure generated by $\{(N_n,B_n)\}$  for which  an equi-positive subsequence cannot be constructed.  However, it may still be possible that equi-positive subsequence is constructible for those $B_n$ which generate Hadamard triples. Essentially, it is to solve the following question.

\medskip

{\bf (Qu 2):} Let 
$$
{\mathcal H}: = \{B\subset{\mathbb Z}: \exists N \ \mbox{such that}  \  B\subset\{0,1,...,N-1\} \ \mbox{and} \ (N,B,L)  \ \mbox{forms a Hadamard triple for some } \ L\}
$$
Does the sum of sine problem hold for ${\mathcal H}$? 

\medskip

It appears that the Hadamard triple imposes a rigid condition on the set $B$ we can choose, so ({\bf Qu 2}) may still have a positive answer. Nonetheless, we are not able to utilize the Hadamard triple assumption easily at this moment. 

\medskip

Our work also shows that if we replace Hadamard triple tower to a frame triple tower, then except the extreme case that $\{\nu_{>n}\}$ converges weakly to $\rho$ and $\lim_{n\rightarrow\infty}\#B_n = \infty$,  we can construct a Fourier frame for the Cantor-Moran measure. Frame triple condition is now a more flexible condition than the Hadamard triple. The following question is now naturally raised.

\medskip

{\bf (Qu 3):} Given a Cantor-Moran measure $\mu$ generated by $\{(N_n,B_n)\}$. Can we always find a factorization of the Cantor-Moran measure so that we can construct a frame triple tower? 

\medskip

Under the special case that $(N_n,B_n) = (3,\{0,2\})$, this is exactly the question whether the middle-third Cantor measure admit a Fourier frame. In this case, $\nu_{>n}$ is always the middle-third Cantor measure, so we will be able to construct a Fourier frame immiediately once we can construct a frame triple tower upon some factorization.

\medskip

 A way to construct a frame triple tower is to use the Kadison-Singer theorem \cite{MSS} (see also \cite{DEL2018} for some recent advance). Another possible approach is to construct a continuous frame for the middle-third Cantor measure. Then with the Kadison-Singer theorem, Freeman and Speegle \cite{FS2018} is able to sample a discrete Fourier frame out of the continuous frame. Constructing a continuous frame for the middle-third Cantor measure was studied in \cite{DHW2014}, however, from what they tried, there appears to be no direct and natural way to find a continuous frame for the Cantor measure.

\medskip

\subsection{Cantor-Moran measures outside $[0,1]$} Our method can also be used to study Cantor-Moran measure that is not necessarily supported inside $[0,1]$. In other words, some $B_n$ is not a subset of $\{0,1,...,N_n-1\}$. In this case, we will require a {\it no-overlap condition}. Given $\{(N_n,B_n)\}$ that generates a Cantor-Moran measure $\mu$, we write $\mu = \mu_n\ast\mu_{>n}$. Denote by $K_n$, $K_{>n}$ to be the support of $\mu_n$ and $\mu_{>n}$ respectively. Note that $K_n$ consists only of finitely many points.  We say that $\mu$ satisifies the {\it no-overlap condition} if 
$$
\mu (({\bf b}+K_{>n})\cap ({\bf b}'+K_{>n})) = 0
$$
 for all ${\bf b}\ne {\bf b}'\in K_n$. When $B_{n}\subset\{0,1,...,N_{n-1}\}$ for all $n$, the no-overlap condition is easily satisfied since the intersection is either empty or consists only of one point. In \cite[Theorem 3.3]{DEL2018}, it was proved that under the no-overlap condition, Theorem \ref{theorem_LW} continues to hold. 
 
 \medskip

The following theorem is still true by some obvious modification of our results.
 
 \begin{theorem}
 Let $\{(N_n,B_n,L_n)\}$ be a frame triple tower (or respectively a Hadamard triple tower).  Suppose that $\mu(N_n,B_n)$ satisfies the no-overlap condition. If 
 \begin{enumerate}
     \item there exists a compact set $K$ such that all support of $\nu_{>n}$ are in $K$ and,
     \item there exists a subsequence $\{n_k\}$ such that  $\{\nu_{>n_k}\}$ is an admissible sequence in the sense that their periodic zero set are empty, so is its weak limit.
 \end{enumerate}
 Then $\mu(N_n,B_n)$ is a frame-spectral measure (or respectively a spectral measure). 
 \end{theorem}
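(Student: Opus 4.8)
The plan is to reproduce, essentially verbatim, the implication chain that proves Theorem~\ref{maintheorem1}, namely \emph{admissible} $\Rightarrow$ \emph{equi-positive} $\Rightarrow$ $\delta(\Lambda)>0$ $\Rightarrow$ \emph{(frame-)spectral}, with two modifications dictated by the hypotheses: the fundamental domain $[0,1]$ is replaced by an arbitrary compact support $K$, and the no-overlap condition is used in place of the automatic no-overlap enjoyed when $B_n\subset\{0,\dots,N_n-1\}$. First I would record that everything in Section~3 that feeds the equi-positivity argument is insensitive to whether the ambient support is $[0,1]$ or a general compact $K$. Concretely, Lemma~\ref{equicontinuity} already asserts equicontinuity of $\widehat{\mathcal P}(K)$ for \emph{any} compact $K$, and the weak compactness of $\mathcal P(K)$ holds for any compact $K$; by hypothesis~(1) the whole family $\{\nu_{>n}\}$ lies in a single $\mathcal P(K)$.

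Next I would re-run Proposition~\ref{prop1} and Theorem~\ref{Key_proposition}. Their proofs use only (a) weak compactness on $\mathcal P(K)$, (b) the pointwise convergence of Fourier transforms along a weakly convergent subsequence, (c) the equicontinuity from Lemma~\ref{equicontinuity}, and (d) the defining property of an admissible family, namely that the integral periodic zero sets of all members and of all weak limits are empty. Item (d) is precisely what hypothesis~(2) grants for the subsequence $\{\nu_{>n_k}\}$, so the contradiction argument of Proposition~\ref{prop1} produces, for each $x$, a uniform lower bound $\epsilon_x>0$, and the compactness covering of $[0,1]$ by balls $B(x_j,\delta_{x_j}/2)$ in Theorem~\ref{Key_proposition} goes through unchanged since $[0,1]$ is still a fundamental domain of $\mathbb Z$. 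Hence $\{\nu_{>n_k}\}$ is equi-positive.

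With equi-positivity in hand, I would apply the construction of Theorem~\ref{theorem_admissible_spectral} word for word. That proof factorizes $\mu$ along $\{n_k\}$, uses the identity $\nu_{>{\bf N}_{n_k}}=\nu_{>n_k}$ from \eqref{equndo}, and inductively builds $\Lambda_k$ out of ${\bf L}_{n_{k-1}+1,n_k}$ together with the integer shifts $k_{x_{\ell_k},\nu_{>n_k}}$ supplied by equi-positivity, arranging \eqref{eq_small_delta} by factorizing enough levels; the resulting $\Lambda=\bigcup_k\Lambda_k\subset\mathbb Z$ satisfies $\delta(\Lambda)\ge\epsilon_0^2>0$. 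None of these manipulations touch the location of the support of $\nu_{>n_k}$---they only manipulate the values of $\widehat{\mu_{>{\bf N}_{n_k}}}$---so the argument is valid for a general compact $K$.

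Finally I would close the chain by invoking the no-overlap version of the fundamental theorem: Theorem~\ref{theorem_LW} as stated assumes $B_n\subset\{0,\dots,N_n-1\}$, but by \cite[Theorem~3.3]{DEL2018} it continues to hold under the no-overlap condition, which is the standing assumption of the theorem, so $\delta(\Lambda)>0$ yields that $\mu(N_n,B_n)$ is frame-spectral (spectral in the Hadamard case) with spectrum $\Lambda\subset\mathbb Z$. The main obstacle here is conceptual rather than computational: in the $[0,1]$ theory admissibility could be \emph{verified} via the classification in Theorem~\ref{prop_Dirac_measure} (only $\rho$ has a nonempty periodic zero set), exploiting the clean identification of $[0,1]\setminus\{0,1\}$ with $\mathbb T$; for a general compact $K$ no such classification is available, so admissibility must instead be taken as a hypothesis and one must check, step by step, that every earlier appeal to ``support in $[0,1]$'' was in fact only an appeal to ``support in a fixed compact set'' together with ``$[0,1]$ is a fundamental domain of $\mathbb Z$.'' Confirming that no hidden use of the interval structure survives in Sections~3--4 is the delicate point of the argument.
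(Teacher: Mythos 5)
Your proposal is correct and follows exactly the paper's own argument: the paper likewise observes that the Section 3 results (equicontinuity on $\widehat{{\mathcal P}}(K)$, Proposition \ref{prop1}, Theorem \ref{Key_proposition}) never use the support being $[0,1]$, so hypotheses (i) and (ii) yield equi-positivity, after which Theorem \ref{theorem_admissible_spectral} together with the no-overlap version of Theorem \ref{theorem_LW} from \cite{DEL2018} finishes the proof. Your write-up is simply a more explicit unpacking of the same chain, including the correct observation that $[0,1]$ enters Theorem \ref{Key_proposition} only as a fundamental domain of ${\mathbb Z}$ and that the Section 4 classification is replaced by taking admissibility as a hypothesis.
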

 
 \begin{proof}
Note that all theorems in Section 3 do not require the assumption that $\nu_{>n}$ is supported in $[0,1]$.  The first condition ensures that equicontinuity of $\{\nu_{>n}\}$ still holds, and the second condition ensures the equi-positivity holds by Theorem \ref{Key_proposition}. With equi-positivity, Theorem \ref{theorem_admissible_spectral} and Theorem \ref{theorem_LW} (modified under the no-overlap condition) gives us our desired conclusion.
 \end{proof}

 \medskip
 
 As the measure is now outside $[0,1]$, the pull-back measures of $\nu_{>n}$ to ${\mathbb T}$ becomes much more complicated. Therefore, condition (ii) in the previous theorem cannot be easily analyzed like Section 4. The following example is previously known \cite{AHL2015} to be the counterexample of {\bf (Qu 1)}. Let us also analyze its admissibility and equi-positivity.

 \begin{example}
 Consider 
 $$
 \mu = \delta_{\frac12\{0,1\}}\ast\delta_{\frac{1}{2^2}\{0,3\}}\ast\delta_{\frac{1}{2^3}\{0,3\}}\ast...
 $$
with all $N_n = 2$ and $B_n = \{0,3\}$ except $n=1$ which equals to $\{0,1\}$. From \cite{AHL2015}, it is known that $\mu$ is not a spectral measure even though $(N_n,B_n,L_n)$ are Hadamard triples with $L_n = \{0,1\}$.  

\medskip

In this example, $\nu_{>n} = \frac13{\mathcal L}|_{[0,3]}$ for all $n>1$, the normalized Lebesgue measure supported on [0,3]. Hence, $\widehat{\nu_{>n}}$ has a non-empty periodic zero set (e.g. $1/3\in{\mathcal Z}(\nu_{>n})$). Hence, $\{\nu_{>n}\}$ cannot be admissible. Clearly, it cannot be equi-positive either since there is no way to move $1/3$ by an integer to make the Fourier transform be non-zero. 
 \end{example}

We see that in some sense the gcd$(B_n)>1$ for infinitely many $n$ may create some trouble. Assuming all gcd$(B_n)=1$ is a good starting place to study the general case.  The following conjecture below  may be a reasonable conjecture to this end.

\begin{conjecture}
 Suppose that gcd$(B_n)=1$ for all $n$ and suppose that $\{(N_n,B_n,L_n)\}$ forms a Hadamard triple tower with $\mu(N_n,B_n)$ having a compact support. Then $\mu(N_n,B_n)$ always has the no-overlap condition and it is a spectral measure if and only if $\{\nu_{>n}\}$ has an equi-positive subsequence. 
\end{conjecture}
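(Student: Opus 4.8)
The plan is to separate the two assertions: the structural claim that the no-overlap condition holds automatically, and the spectral dichotomy, whose forward implication is within reach of the machinery already developed and whose reverse implication is the genuine obstruction.

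For the no-overlap condition I would exploit the interplay between $\gcd(B_n)=1$ and the Hadamard hypothesis. The condition $\mu(({\bf b}+K_{>n})\cap({\bf b}'+K_{>n}))=0$ for distinct ${\bf b},{\bf b}'\in K_n$ is equivalent to almost-everywhere injectivity of the coding map $(b_j)_j\mapsto\sum_j b_j/(N_1\cdots N_j)$. Because each $(N_n,B_n,L_n)$ is a Hadamard triple, $\#B_n=\#L_n$ and $\delta_{B_n/N_n}$ is spectral with spectrum $L_n$, which forces the $n$-th level translates to be mutually orthogonal in $L^2(\mu_n)$; combining this with a Fubini/telescoping argument on the tail $\mu_{>n}$ and the fact that $\mu$ is non-atomic should confine any overlap to the measure-zero set where two distinct codings agree. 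The role of $\gcd(B_n)=1$ is to exclude the degenerate situation -- illustrated by the $B_n=\{0,3\}$ example above, where $\gcd=3$ -- in which the digit sets collapse onto a common sublattice and create a positive-measure overlap; quantifying this through the non-vanishing of $\prod_n\widehat{\delta_{B_n/N_n}}$ off a null set is where the work lies.

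For the dichotomy, the ``if'' direction is essentially already in hand: granting part (a), the no-overlap version of Theorem \ref{theorem_LW} together with Theorem \ref{theorem_admissible_spectral} and Theorem \ref{Key_proposition} converts an equi-positive subsequence $\{\nu_{>n_k}\}$ into $\delta(\Lambda)>0$ and hence into a spectrum. The substance is the ``only if'' direction, which I would attack by contraposition. If $\{\nu_{>n}\}$ has no equi-positive subsequence, then by Lemma \ref{Lemma_rho} it cannot be admissible along any subsequence, so it converges weakly to $\rho$; by Theorem \ref{lemma_bounded_B} one must have $\#B_n\to\infty$; and by Proposition \ref{prop_equiv-positive} together with Theorem \ref{theorem_6.3}, $\widehat{\delta_{B_n/N_n}}(\tfrac12+k)\to 0$ uniformly in $k$, a Bourgain-type smallness of the sum of sines on $\tfrac12+\Z$. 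The target would then be to promote this uniform smallness into a quantitative Bessel-type defect that obstructs completeness of \emph{every} exponential system, not merely the canonical one constructed from the $L_n$.

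The hard part will be precisely this reverse implication, and it carries two independent difficulties. First, it is entangled with the open question {\bf (Qu 2)}: one needs the Hadamard constraint to rule out Bourgain's cancellation, i.e. that the sum of sine problem holds for the class ${\mathcal H}$, and although the unitarity of $H$ (for example $\gcd(B_n)=1$ combined with the fact that $L_n$ sees the full structure mod $N_n$) is the natural rigidity to exploit, no one has yet managed to use it. Second, even granting such an obstruction at $\tfrac12+\Z$, deducing \emph{non}-spectrality requires a genuine necessary condition for spectrality of $\mu$ -- a means of excluding exotic spectra not arising from the tower -- and at present no such converse criterion is available. I therefore expect part (a) and the ``if'' half of the dichotomy to be accessible with current tools, while the ``only if'' half remains the essential obstacle, contingent on progress on {\bf (Qu 2)} and on a new completeness criterion.
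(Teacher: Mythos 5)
The statement you have been asked to prove is presented in the paper as a \emph{conjecture}: the authors offer no proof of it, so there is no argument of theirs to compare yours against. Your proposal is, accordingly, not a proof but a correct diagnosis of why the statement is open, and on that level it is accurate. The one part that is genuinely within reach is the ``if'' half of the dichotomy: given the no-overlap condition and an equi-positive subsequence, Theorem \ref{theorem_admissible_spectral} together with the no-overlap version of Theorem \ref{theorem_LW} does produce a spectrum, exactly as you say -- though note that even here you need the supports of the $\nu_{>n}$ to lie in a \emph{common} compact set in order to invoke the equicontinuity of Lemma \ref{equicontinuity}, and compactness of the support of $\mu$ alone does not obviously give this (the paper's own Theorem 9.1 adds it as a separate hypothesis). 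Your reduction of the contrapositive of the ``only if'' direction -- no equi-positive subsequence forces weak convergence to $\rho$ by Lemma \ref{Lemma_rho}, forces $\#B_n\to\infty$ by Theorem \ref{lemma_bounded_B}, and forces uniform vanishing of $\widehat{\delta_{B_n/N_n}}$ on $\frac12+\Z$ by Proposition \ref{prop_equiv-positive} and Theorem \ref{theorem_6.3} -- is a faithful assembly of the paper's results.

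The genuine gaps are the two places where you yourself write that ``the work lies'' or that no tool exists, and they are real. For the no-overlap claim, you give no mechanism by which $\gcd(B_n)=1$ prevents positive-measure overlap of the translates ${\bf b}+K_{>n}$; the Hadamard property gives orthogonality of exponentials against the discrete measures $\mu_n$, but orthogonality in $L^2(\mu_n)$ says nothing about how the tail sets $K_{>n}$ sit inside $\R$, and the counterexample with $B_n=\{0,3\}$, $N_n=2$ (where $\nu_{>n}$ is Lebesgue on $[0,3]$ and the translates overlap on a set of positive measure) shows the failure mode is geometric, not spectral. ``Quantifying the non-vanishing of $\prod_n\widehat{\delta_{B_n/N_n}}$ off a null set'' is a restatement of the problem, not a step toward its solution. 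For the ``only if'' direction, the obstruction is even more fundamental: equi-positivity is used throughout the paper only as a \emph{sufficient} condition for $\delta(\Lambda)>0$ for the particular candidate sets $\Lambda$ built from the $L_n$; to conclude non-spectrality from its failure you would need a criterion excluding \emph{every} countable $\Lambda\subset\R$, and no such necessary condition for spectrality of singular measures is known. Promoting the uniform smallness of $\widehat{\delta_{B_n/N_n}}$ on $\frac12+\Z$ to a Bessel-type defect for arbitrary exponential systems is not something the paper's machinery (or any cited result) supplies. So the proposal should be read as a correct account of why the conjecture is a conjecture, not as progress toward proving it.
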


\noindent {\bf Acknowledgement.} The authors would like to thank Professors Tamas Erdelyi and Mihalis Kolountzakis for their insightful discussions through E-mails. Special thanks to Mihalis Kolountzakis for pointing out  Bourgain's paper \cite{B1984} which helps us to complete the project.

\end{document}